\documentclass[twoside,11pt]{article}

%

\usepackage[utf8]{inputenc}

\usepackage{amsmath}
\usepackage{amsfonts}

\usepackage{caption}
\usepackage{subcaption}

\usepackage[norelsize,ruled,vlined,commentsnumbered]{algorithm2e}

\usepackage[shortlabels]{enumitem}

\usepackage{multirow}
\usepackage{hhline}

\usepackage{jmlr2e}

\newcommand{\abf}{\mathbf{a}}

\newcommand{\Bbf}{\mathbf{B}}

\newcommand{\Cbf}{\mathbf{C}}
\newcommand{\Ccal}{\mathcal{C}}

\newcommand{\Ebb}{\mathbb{E}}
\newcommand{\fbf}{\mathbf{f}}
\newcommand{\Fcal}{\mathcal{F}}

\newcommand{\hbf}{\mathbf{h}}
\newcommand{\Hcal}{\mathcal{H}}

\newcommand{\Lbf}{\mathbf{L}}

\newcommand{\Mbf}{\mathbf{M}}

\newcommand{\Mcal}{\mathcal{M}}
\newcommand{\Nbb}{\mathbb{N}}
\newcommand{\Nbf}{\mathbf{N}}

\newcommand{\Obf}{\mathbf{O}}
\newcommand{\Pbb}{\mathbb{P}}
\newcommand{\Pbf}{\mathbf{P}}

\newcommand{\Pfrak}{\mathfrak{P}}
\newcommand{\Qbf}{\mathbf{Q}}
\newcommand{\Qcal}{\mathcal{Q}}
\newcommand{\Rbb}{\mathbb{R}}
\newcommand{\Rbf}{\mathbf{R}}

\newcommand{\Sfrak}{\mathfrak{S}}

\newcommand{\Ubf}{\mathbf{U}}
\newcommand{\ubf}{\mathbf{u}}

\newcommand{\Vbf}{\mathbf{V}}
\newcommand{\Vcal}{\mathcal{V}}

\newcommand{\Xcal}{\mathcal{X}}
\newcommand{\Ybf}{\mathbf{Y}}
\newcommand{\Ycal}{\mathcal{Y}}

\newcommand{\argmin}{\text{arg min}}
\newcommand{\cquad}{C_{\Fcal,2}}
\newcommand{\cinf}{C_{\Fcal,\infty}}
\newcommand{\dperm}{{d_{\text{perm}}}}
\newcommand{\Diag}{\text{Diag}}

\newcommand{\new}{{\text{new}}}
\newcommand{\one}{\mathbf{1}}
\newcommand{\pen}{\text{pen}}
\newcommand{\Span}{{\text{Span}}}


\jmlrheading{18}{2017}{1-46}{6/21; Revised 9/17, 4/30}{7/13}{17-345}{Luc Lehéricy}


\ShortHeadings{State-by-state Minimax HMM Estimation}{Luc Lehéricy}
\firstpageno{1}

\begin{document}

\title{State-by-state Minimax Adaptive Estimation for Nonparametric Hidden Markov Models}

\author{\name Luc Lehéricy \email luc.lehericy@math.u-psud.fr \\
       \addr Laboratoire de Mathématiques d'Orsay \\
       Univ. Paris-Sud, CNRS, Université Paris-Saclay \\
       91405 Orsay, France}

\editor{Animashree Anandkumar}

\maketitle

\begin{abstract}
In this paper, we introduce a new estimator for the emission densities of a nonparametric hidden Markov model. It is adaptive and minimax with respect to each state's regularity\---as opposed to globally minimax estimators, which adapt to the worst regularity among the emission densities.
Our method is based on Goldenshluger and Lepski's methodology.
It is computationally efficient and only requires a family of preliminary estimators, without any restriction on the type of estimators considered. We present two such estimators that allow to reach minimax rates up to a logarithmic term: a spectral estimator and a least squares estimator.
We show how to calibrate it in practice and assess its performance on simulations and on real data.
\end{abstract}

\begin{keywords}
hidden Markov model; model selection; nonparametric density estimation; oracle inequality; adaptive minimax estimation; spectral method; least squares method.
\end{keywords}


\section{Introduction}

Finite state space hidden Markov models, or HMMs in short, are powerful tools for studying discrete time series and have been used in a variety of applications such as economics, signal processing and image analysis, genomics, ecology, speech recognition and ecology among others. The core idea is that the behaviour of the observations depends on a hidden variable that evolves like a Markov chain.

Formally, a hidden Markov model is a process $(X_j, Y_j)_{j \geq 1}$ in which $(X_j)_j$ is a Markov chain on $\Xcal$, the $Y_i$'s are independent conditionally on $(X_j)_j$ and the conditional distribution of $Y_i$ given $(X_j)_j$ depends only on $X_i$. The parameters of the HMM are the parameters of the Markov chain, that is its initial distribution and transition matrix, and the parameters of the observations, that is the \emph{emission distributions} $(\nu^*_k)_{k \in \Xcal}$ where $\nu^*_k$ is the distribution of $Y_j$ conditionally to $X_j = k$. Only the observations $(Y_j)_j$ are available.

In this article, we focus on estimating the emission distributions in a nonparametric setting. More specifically, assume that the emission distributions have a density with respect to some known dominating measure $\mu$, and write $f^*_k$ their densities\---which we call the \emph{emission densities}. The goal of this paper is to estimate all $f^*_k$'s with their minimax rate of convergence when the emission densities are not restricted to belong to a set of densities described by finitely many parameters.

\subsection{Nonparametric state-by-state adaptivity}

Theoretical results in the nonparametric setting have only been developed recently. \cite{dCGLLC15} and \cite{robin2014estimating} introduce spectral methods, and the latter is proved to be minimax but not adaptive\---which means one needs to know the regularity of the densities beforehand to reach the minimax rate of convergence. \cite{dCGL15} introduce a least squares estimator which is shown to be minimax adaptive up to a logarithmic term. However, all these papers have a common drawback: they study the emission densities as a whole and can not handle them separately. This comes from their error criterion, which is the supremum of the errors on all densities: what they actually prove is that $\max_{k \in \Xcal} \| \hat{f}_k - f^*_k \|_2$ converges with minimax rate when $(\hat{f}_k)_k$ are their density estimators.
In general, the regularity of each emission density could be different, leading to different rates of convergence. This means that having just one emission density that is very hard to estimate is enough to deteriorate the rate of convergence of all emission densities.

In this paper, we construct an estimator that is adaptive and estimates each emission density with its own minimax rate of convergence. We call this property state-by-state adaptivity. Our method does so by handling each emission density individually in a way that is theoretically justified\---reaching minimax and adaptive rates of convergence with respect to the regularity of the emission densities\---and computationally efficient thanks to its low computational and sample complexity.

Our approach for estimating the densities nonparametrically is model selection. The core idea is to approximate the target density using a family of parametric models that is dense within the nonparametric class of densities. For a square integrable density $f^*$, we consider its projection $f^*_M$ on a finite-dimensional space $\Pfrak_M$ (the parametric model), where $M$ is a model index. This projection introduces an error, the \emph{bias}, which is the distance $\|f^* - f^*_M\|_2$ between the target quantity and the model. The larger the model, the smaller the bias. On the other hand, larger models will make the estimation harder, resulting in a larger \emph{variance} $\| \hat{f}_M - f^*_M \|_2^2$. The key step of model selection is to select a model with a small total error\---or alternatively, a good \emph{bias-variance tradeoff}.

In many situations, it is possible to reach the minimax rate of convergence with a good bias-variance tradeoff. Previous estimators of the emission densities of a HMM perform such a tradeoff based on an error that takes the transition matrix and all emission densities into account. Such an error leads to a rate of convergence that corresponds to the slowest minimax rate amongst the different parameters. In contrast, our method performs a bias-variance tradeoff for each emission density using an error term that depends only on the density in question, which makes it possible to reach the minimax rates for each density.

\subsection{Plug-in procedure}

The method we propose is based on the method developed in the seminal papers of \cite{goldenshluger2011bandwidth, goldenshluger2014adaptive} for density estimation, extended by \cite{goldenshluger2013general} to the white noise and regression models. It takes a family of estimators as input and chooses the estimator that performs a good bias-variance tradeoff separately for each hidden state. We recommend the article of \cite{lacour2016estimator} for an insightful presentation of this method in the case of conditional density estimation.

Our method and assumptions are detailed in Section \ref{sec_statebystatemethod}. Let us give a quick overview of the method. Assume the densities belong to a Hilbert space $\Hcal$. Given a family of subsets of finite-dimensional subspaces of $\Hcal$ (the models) indexed by $M$ and estimators $\hat{f}^{(M)}_k$ of the emission densities for each hidden state $k$ and each model $M$, one computes a substitute for the bias of the estimators by
\begin{equation*}
A_k(M) = \underset{M'}{\sup} \left\{ \left\| \hat{f}^{(M')}_k - \hat{f}^{(M \wedge M')}_k \right\|_2 - \sigma(M') \right\}.
\end{equation*}
for some penalty $\sigma$. Then, for each state $k$, one selects the estimator $\hat{M}_k$ from the model $M$ minimizing the quantity $A_k(M) + 2 \sigma(M)$. 
The penalty $\sigma$ can also be interpreted as a variance bound, so that this penalization procedure can be seen as performing a bias-variance tradeoff.
The novelty of this method is that it selects a different $\hat{M}_k$, that is a different model, for each hidden state: this is where the state-by-state adaptivity comes from. Also note that contrary to \cite{goldenshluger2013general}, we do not make any assumption on how the estimators are computed, provided a variance bound holds.

The main theoretical result is an oracle inequality on the selected estimators $\hat{f}^{(\hat{M}_k)}_k$, see Theorem \ref{th_oracleLepski}. As a consequence, we are able to get a rate of convergence that is different for each state. These rates of convergence will even be adaptive minimax up to a logarithmic factor when the method is applied to our two families of estimators: spectral estimators and least squares estimators. To the best of our knowledge, this is the first state-by-state adaptive algorithm for hidden Markov models.

Note that finding the right penalty term $\sigma$ is essential in order to obtain minimax rates of convergence. This requires a fine theoretical control of the variance of the auxiliary estimators, in the form of assumption \textbf{[H$(\epsilon)$]} (see Section \ref{sec_Lepski_framework}). To the best of our knowledge, there is no suitable result in the literature. This is the second theoretical contribution of this paper: we control two families of estimators in a way that makes it possible to reach adaptive minimax rate with our state-by-state selection method, up to a logarithmic term.

On the practical side, we run this method and several variants on data simulated from a HMM with three hidden states and one irregular density, as illustrated in Section \ref{sec_simulations}. The simulations confirm that it converges with a different rate for each emission density, and that the irregular density does not alter the rate of convergence of the other ones, which is exactly what we wanted to achieve.

Better still, the added computation time is negligible compared to the computation time of the estimators: even for the spectral estimators of Section \ref{sec_spectral_th} (which can be computed much faster than the least squares estimators and the maximum likelihood estimators using EM), computing the estimators on 200 models for 50,000 observations (the lower bound of our sample sizes) of a 3-states HMM requires a few minutes, compared to a couple of seconds for the state-by-state selection step. The difference becomes even larger for more observations, since the complexity of the state-by-state selection step is independent of the sample size: for instance, computing the spectral estimators on 300 models for 2,200,000 observations requires a bit less than two hours, and a bit more than ten hours for 10,000,000 observations, compared to less than ten seconds for the selection step in both cases. We refer to Section \ref{sec_complexity} for a more detailled discussion about the algorithmic complexity of the algorithms.

\subsection{Families of estimators}

We use two methods to construct families of estimators and apply the selection algorithm. The motivation and key result of this part of the paper is to control the variances of the estimators by the right penalty $\sigma$. This part is crucial if one wants to get adaptive minimax rates, and has not been adressed in previous papers. For both methods, we develop new theoretical results that allow to obtain a penalty $\sigma$ that leads to adaptive minimax rates of convergence up to a logarithmic term. We present the algorithms and theoretical guarantees in Section \ref{sec_auxiliary_methods}.

The first method is a spectral method and is detailed in Section \ref{sec_spectral_th}. Several spectral algorithms were developed, see for instance \cite{AHK12} and \cite{HKZ12} in the parametric setting, and \cite{robin2014estimating} and \cite{dCGLLC15} in a nonparametric framework. The main advantages of spectral methods are their computational efficiency and the fact that they do not resort to optimization procedure such as the EM and more generally nonconvex optimization algorithm, thus avoiding the well-documented issue of getting stuck into local sub-optimal minima.

Our spectral algorithm is based on the one studied in \cite{dCGLLC15}. However, their estimator cannot reach the minimax rate of convergence: the variance bound $\sigma(M)$ deduced from their results is proportional to $M^3$, while reaching the minimax rate requires $\sigma(M)$ to be proportional to $M$. To solve this issue, we introduce a modified version of their algorithm and show that it has the right variance bound, so that it is able to reach the adaptive minimax rate after our state-by-state selection procedure, up to a logarithmic term. Our algorithm also has an improved complexity: it is at most quasi-linear in the number of observations and in the model dimension, instead of cubic in the model dimension for the original algorithm.

The second method is a least squares method and is detailed in Section \ref{sec_leastsquares_th}. Nonparametric least squares methods were first introduced by \cite{dCGL15} to estimate the emission densities and extended by \cite{lehericy2016order} to estimate all parameters at once. They rely on estimating the density of three consecutive observations of the HMM using a least squares criterion. Since the model is identifiable from the distribution of three consecutive observations when the emission distributions are linearly independent, it is possible to recover the parameters from this density. In practice, these methods are more accurate than the spectral methods and are more stable when the models are close to not satisfying the identifiability condition, see for instance \cite{dCGL15} for the accuracy and \cite{lehericy2016order} for the stability. However, since they rely on the minimization of a nonconvex criterion, the computation times of the corresponding algorithms are often longer than the ones from spectral methods.

A key step in proving theoretical guarantees for least squares methods is to relate the error on the density of three consecutive observations to the error on the HMM parameters in order to obtain an oracle inequality on the parameters from the oracle inequality on the density of three observations. More precisely, the difficult part is to lower bound the error on the density by the error on the parameters. Let us write $g$ and $g'$ the densities of the first three observations of a HMM with parameters $\theta$ and $\theta'$ respectively (these parameters actually correspond to the transition matrix and the emission densities of the HMM). Then one would like to get
\begin{equation*}
\| g - g' \|_2 \geq \Ccal(\theta) \, d(\theta, \theta')
\end{equation*}
where $d$ is the natural $\Lbf^2$ distance on the parameters and $\Ccal(\theta)$ is a positive constant which does not depend on $\theta'$.
Such inequalities are then used to lower bound the variance of the estimator of the density of three observations $g^*$ by the variance of the parameter estimators: let $g$ be the projection of $g^*$ and $g'$ be the estimator of $g^*$ on the current approximation space (with index $M$). Denote $\theta^*_M$ and $\hat{\theta}_M$ the corresponding parameters and assume that the error $\| g - g' \|_2$ is bounded by some constant $\sigma'(M)$, then the result will be that
\begin{equation*}
d(\hat{\theta}_M, \theta^*_M) \leq \frac{\sigma'(M)}{\Ccal(\theta^*_M)}.
\end{equation*}

Such a result is crucial to control the variance of the estimators by a penalty term $\sigma$, which is the result we need for the state-by-state selection method. In the case where only the emission densities vary, \cite{dCGL15} proved that such an inequality always holds for HMMs with 2 hidden states using brute-force computations, but it is still unknown whether it is always true for larger number of states. When the number of states is larger than 2, they show that this inequality holds under a generic assumption. \cite{lehericy2016order} extended this result to the case where all parameters may vary.
However, the constants deduced from both articles are not explicit, and their regularity (when seen as a function of $\theta$) is unknown, which makes it impossible to use in our setting: one needs the constants $\Ccal(\theta^*_M)$ to be lower bounded by the same positive constant, which requires some sort of regularity on the function $\theta \longmapsto \Ccal(\theta)$ in the neighborhood of the true parameters.

To solve this problem, we develop a finer control of the behaviour of the difference $\| g - g' \|_2$, which is summarized in Theorem \ref{th_minoration_quad}. We show that it is possible to assume $\Ccal$ to be lower semicontinuous and positive without any additional assumption. In addition, we give an explicit formula for the constant when $\theta'$ and $\theta$ are close, which gives an explicit bound for the asymptotical rate of convergence.
This result allows us to control the variance of the least squares estimators by a penalty $\sigma$ which ensures that the state-by-state method reaches the adaptive minimax rate up to a logarithmic term.

\subsection{Numerical validation and application to real data sets}

Section \ref{sec_simulations} shows how to apply the state-by-state selection method in practice and shows its performance on simulated data and a comparison with a method based on cross validation that does note estimate state by state.

Note that the theoretical results give a penalty term $\sigma$ known only up to a multiplicative constant which is unknown in practice. This problem, the \emph{penalty calibration} issue, is usual in model selection methods. It can be solved using algorithms such as the dimension jump heuristics, see for instance \cite{birge2007minimal}, who introduce this heuristics and prove that it leads to an optimal penalization in the special case of Gaussian model selection framework. This method has been shown to behave well in practice in a variety of domains, see for instance \cite{BMM12a}. We describe the method and show how to use this heuristics to calibrate the penalties in Section \ref{sec_penalty_calibration}.

We propose and compare several variants of our algorithm. Section \ref{sec_penalty_calibration} shows some variants in the calibration of the penalties and Section \ref{sec_alternatives} shows other ways to select the final estimator. We discuss the result of the simulations and the convergence of the selected estimators in Section \ref{sec_results}.

In Section \ref{sec_VC}, we compare our method with a non state-by-state adaptive method based on cross validation.
Finally, we discuss the complexities of the auxiliary estimation methods and of our selection procedures in Section \ref{sec_complexity}.

In Section \ref{sec_appli}, we apply our algorithm to two sets of GPS tracks. The first data set contains trajectories of artisanal fishers from Madagascar, recorded using a regular sampling with 30 seconds time steps. The second data set contains GPS positions of Peruvian seabird, recorded with 1 second time steps.
We convert these tracks into the average velocity during each time step and apply our method using spectral estimators as input. The observed behaviour confirms the ability of our method to adapt to the different regularities by selecting different dimensions for each emission density.

Section \ref{sec_conclusion} contains a conclusion and perspectives for this work.

Finally, Appendix A contains the details of our spectral algorithm and Appendix \ref{sec_proofs} is dedicated to the proofs.

\subsection{Notations}

We will use the following notations throughout the paper.
\begin{itemize}[itemsep=0pt]
\item $[K] = \{1, \dots, K\}$ is the set of integers between 1 and $K$.

\item $\Sfrak(K)$ is the set of permutations of $[K]$.

\item $\| \cdot \|_F$ is the Frobenius norm. We implicitely extend the definition of the Frobenius norm to tensors with more than 2 dimensions.

\item $\Span(A)$ is the linear space spanned by the family $A$.

\item $\sigma_1(A) \geq \dots \geq \sigma_{p \wedge n}(A)$ are the singular values of the matrix $A \in \Rbb^{n \times p}$.

\item $\Lbf^2(\Ycal, \mu)$ is the set of real square integrable measurable functions on $\Ycal$ with respect to the measure $\mu$.

\item For $\fbf = (f_1, \dots, f_K) \in \Lbf^2(\Ycal, \mu)^K$, $G(\fbf)$ is the Gram matrix of $\fbf$, defined by $G(\fbf)_{i,j} = \langle f_i, f_j \rangle$ for all $i,j \in [K]$.
\end{itemize}

\section{The state-by-state selection procedure}
\label{sec_statebystatemethod}

In this section, we introduce the framework and our state-by-state selection method.

In Section \ref{sec_Lepski_framework}, we introduce the notations and assumptions. In Section \ref{sec_lepski_th}, we present our selection method and prove that it satisfies an oracle inequality.

\subsection{Framework and assumptions}
\label{sec_Lepski_framework}

Let $(X_j)_{j \geq 1}$ be a Markov chain with finite state space $\Xcal$ of size $K$. Let $\Qbf^*$ be its transition matrix and $\pi^*$ be its initial distribution.
Let $(Y_j)_{j \geq 1}$ be random variables on a measured space $(\Ycal, \mu)$ with $\mu$ $\sigma$-finite such that conditionally on $(X_j)_{j \geq 1}$ the $Y_j$'s are independent with a distribution depending only on $X_j$. Let $\nu^*_k$ be the distribution of $Y_j$ conditionally to $\{X_j = k\}$. Assume that $\nu^*_k$ has density $f^*_k$ with respect to $\mu$. We call $(\nu^*_k)_{k \in \Xcal}$ the \emph{emission distributions} and $\fbf^* = (f^*_k)_{k \in \Xcal}$ the \emph{emission densities}.
Then $(X_j, Y_j)_{j \geq 1}$ is a hidden Markov model with parameters $(\pi^*, \Qbf^*, \fbf^*)$. The hidden chain $(X_j)_{j \geq 1}$ is assumed to be unobserved, so that the estimators are based only on the observations $(Y_j)_{j \geq 1}$.

Let $(\Pfrak_M)_{M \in \Nbb}$ be a nested family of finite-dimensional subspaces such that their union is dense in $\Lbf^2(\Ycal, \mu)$.
The spaces $(\Pfrak_M)_{M \in \Nbb}$ are our models; in the following we abusively call $M$ the model instead of $\Pfrak_M$.
For each index $M \in \Nbb$, we write $\fbf^{*, (M)} = (f^{*, (M)}_k)_{k \in \Xcal}$ the projection of $\fbf^*$ on $(\Pfrak_M)^K$. It is the best approximation of the true densities within the model $M$.

In order to estimate the emission densities, we do not need to use every models. Typically there is no point in taking models with more dimensions than the sample size, since they will likely be overfitting. Let $\Mcal_n \subset \Nbb$ be the set of indices which will be used for the estimation from $n$ observations. For each $M \in \Mcal_n$, we assume we are given an estimator $\hat{\fbf}^{(M)}_n = (\hat{f}^{(M)}_{n,k})_{k \in \Xcal} \in (\Pfrak_M)^K$. We will need to assume that for all models, the variance\---that is the distance between $\hat{\fbf}^{(M)}_n$ and $\fbf^{*, (M)}$\---is small with high probability. In the following, we drop the dependency in $n$ and simply write $\Mcal$ and $\hat{\fbf}^{(M)}$.

The following result is what one usually obtains in model selection. It bounds the distance between the estimators $\hat{\fbf}^{(M)}$ and the projections $\fbf^{*, (M)}$ by some penalty function $\sigma$. Thus, $\sigma/2$ can be seen as a bound of the variance term.
\begin{description}
\item[\textbf{[H$(\epsilon)$]}] With probability $1 - \epsilon$,
\begin{equation*}
\forall M \in \Mcal, \quad \inf_{\tau_{n,M} \in \Sfrak(K)} \max_{k \in \Xcal} \left\| \hat{f}^{(M)}_k - f^{*, (M)}_{\tau_{n,M}(k)} \right\|_2 \leq \frac{\sigma(M, \epsilon,n)}{2}
\end{equation*}
\end{description}
where the upper bound $\sigma : (M, \epsilon,n) \in \Mcal \times [0,1] \times \Nbb^* \longmapsto \sigma(M, \epsilon,n) \in \Rbb_+$ is nondecreasing in $M$. We show in Sections \ref{sec_spectral_th} and \ref{sec_leastsquares_th} how to obtain such a result for a spectral method and for a least squares method (using an algorithm from \cite{lehericy2016order}). In the following, we omit the parameters $\epsilon$ and $n$ in the notations and only write $\sigma(M)$.

What is important for the selection step is that the permutation $\tau_{n,M}$ does not depend on the model $M$: one needs all estimators $(\hat{f}^{(M)}_k)_{M \in \Mcal}$ to correspond to the same emission density, namely $f^*_{\tau_n(k)}$ when $\tau_{n,M} = \tau_n$ is the same for all models $M$. This can be done in the following way: let $M_0 \in \Mcal$ and let
\begin{equation*}
\hat{\tau}^{(M)} \in \underset{\tau \in \Sfrak(K)}{\argmin} \left\{
	\max_{k \in \Xcal} \left\| \hat{f}^{(M)}_{\tau(k)} - \hat{f}^{(M_0)}_k \right\|_2
\right\}
\end{equation*}
for all $M \in \Mcal$. Then, consider the estimators obtained by swapping the hidden states by these permutations. In other words, for all $k \in \Xcal$, consider
\begin{align*}
\hat{f}^{(M)}_{k, \new} &= \hat{f}^{(M)}_{\hat{\tau}^{(M)}(k)}.
\end{align*}

Now, assume that the error on the estimators is small enough. More precisely, write $B_{M,M_0} = \max_{k \in \Xcal} \left\| f^{*,(M)}_k - f^{*,(M_0)}_k \right\|_2$ the distance between the projections of $\fbf^*$ on the models $M$ and $M_0$ and assume that $2 \left[ \sigma(M)/2 + \sigma(M_0)/2 + B_{M,M_0} \right]$ (that is twice the upper bound of the distance between two estimated emission densities corresponding to the same hidden states in models $M$ and $M_0$) is smaller than $m(\fbf^*,M_0) := \min_{k' \neq k} \left\| f^{*,(M_0)}_k - f^{*,(M_0)}_{k'} \right\|_2$, which is the smallest distance between two different densities of the vector $\fbf^{*,(M_0)}$.

Then \textbf{[H$(\epsilon)$]} ensures that with probability as least $1 - \epsilon$, for all $k$, there exists a single component of $\hat{\fbf}^{(M)}$ that is closer than $\sigma(M)/2 + \sigma(M_0)/2$ of $f^{*,(M_0)}_k$, and this component will be $\hat{f}^{(M)}_{\hat{\tau}^{(M)}(k)}$ by definition. This is summarized in the following lemma.
\begin{lemma}
\label{lemma_permutation_unique}
Assume \textbf{[H$(\epsilon)$]} holds. Then with probability $1 - \epsilon$, 
there exists a permutation $\tau_n \in \Sfrak(K)$ such that for all $k \in \Xcal$ and for all $M \in \Mcal$ such that
\begin{equation*}
\displaystyle \sigma(M) + \sigma(M_0) + 2 B_{M,M_0} < m(\fbf^*,M_0),
\end{equation*}
one has
\begin{equation}
\label{eq_Halt}
\max_{k \in \Xcal} \left\| \hat{f}^{(M)}_{k, \new} - f^{*, (M)}_{\tau_n(k)} \right\|_2 \leq \frac{\sigma(M)}{2}.
\end{equation}
\end{lemma}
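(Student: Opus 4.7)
My plan is to work on the event $\Omega$ of probability at least $1-\epsilon$ on which \textbf{[H$(\epsilon)$]} holds, and to set $\tau_n := \tau_{n,M_0}$, the permutation supplied by \textbf{[H$(\epsilon)$]} for the reference model. For each admissible $M$ I introduce the candidate permutation $\pi_M := \tau_{n,M}^{-1} \circ \tau_n$, which satisfies $\tau_{n,M}(\pi_M(k)) = \tau_n(k)$. The entire content of the lemma reduces to proving the identification $\hat{\tau}^{(M)} = \pi_M$ whenever the gap hypothesis $\sigma(M) + \sigma(M_0) + 2 B_{M,M_0} < m(\fbf^*,M_0)$ holds. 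Once that is established, (\ref{eq_Halt}) is immediate, since $\hat{f}^{(M)}_{k,\new} = \hat{f}^{(M)}_{\pi_M(k)}$ and the index $\pi_M(k)$ is precisely the one at which \textbf{[H$(\epsilon)$]} applied to $M$ delivers the bound $\sigma(M)/2$.

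First I would show the upper bound
\begin{equation*}
\max_k \left\| \hat{f}^{(M)}_{\pi_M(k)} - \hat{f}^{(M_0)}_k \right\|_2 \leq \frac{\sigma(M)}{2} + B_{M,M_0} + \frac{\sigma(M_0)}{2},
\end{equation*}
obtained by inserting $f^{*,(M)}_{\tau_n(k)}$ and $f^{*,(M_0)}_{\tau_n(k)}$ into the norm and applying the triangle inequality together with \textbf{[H$(\epsilon)$]} (at both $M$ and $M_0$) and the definition of $B_{M,M_0}$. Since $\hat{\tau}^{(M)}$ is a minimizer, the same bound holds with $\hat{\tau}^{(M)}$ in place of $\pi_M$.

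Then I would argue uniqueness by contradiction. Suppose $\hat{\tau}^{(M)}(k_0) \neq \pi_M(k_0)$ for some $k_0$, and set $k' := \pi_M^{-1}(\hat{\tau}^{(M)}(k_0))$, so that $k' \neq k_0$ and $\tau_{n,M}(\hat{\tau}^{(M)}(k_0)) = \tau_n(k')$. The reverse triangle inequality, inserting $f^{*,(M)}_{\tau_n(k')}$ and $f^{*,(M_0)}_{\tau_n(k')}$, yields
\begin{align*}
\left\| \hat{f}^{(M)}_{\hat{\tau}^{(M)}(k_0)} - \hat{f}^{(M_0)}_{k_0} \right\|_2
&\geq \left\| f^{*,(M_0)}_{\tau_n(k')} - f^{*,(M_0)}_{\tau_n(k_0)} \right\|_2 - \frac{\sigma(M)}{2} - B_{M,M_0} - \frac{\sigma(M_0)}{2} \\
&\geq m(\fbf^*,M_0) - \frac{\sigma(M)}{2} - B_{M,M_0} - \frac{\sigma(M_0)}{2}.
\end{align*}
The gap hypothesis rearranges exactly to $m(\fbf^*,M_0) - B_{M,M_0} - \sigma(M)/2 - \sigma(M_0)/2 > \sigma(M)/2 + B_{M,M_0} + \sigma(M_0)/2$, contradicting the minimality of $\hat{\tau}^{(M)}$. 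Therefore $\hat{\tau}^{(M)} = \pi_M$, as required.

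The main obstacle is purely bookkeeping: keeping the composition of permutations straight and using the single reference model $M_0$ to lift the model-dependent permutations $\tau_{n,M}$ into one global permutation $\tau_n$. There is no analytic difficulty; the factor $2$ multiplying $B_{M,M_0}$ in the hypothesis is precisely what is needed to place the correct alignment on one side of the separation $m(\fbf^*,M_0)$ and every misalignment strictly on the other.
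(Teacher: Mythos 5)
Your proof is correct and follows essentially the same route as the paper: upper-bound the correctly aligned distance $\bigl\| \hat{f}^{(M)}_{\pi_M(k)} - \hat{f}^{(M_0)}_k \bigr\|_2$ by $\sigma(M)/2 + B_{M,M_0} + \sigma(M_0)/2$ via the triangle inequality, lower-bound any misaligned distance by $m(\fbf^*,M_0) - \sigma(M)/2 - B_{M,M_0} - \sigma(M_0)/2$ via the reverse triangle inequality, and use the gap condition to force $\hat{\tau}^{(M)} = \tau_{n,M}^{-1} \circ \tau_{n,M_0}$. Your explicit contradiction step handling the argmin-over-max is a slightly more careful write-up of the same identification the paper asserts directly.
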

\begin{proof}
Proof in Section \ref{sec_preuve_lemma_permutation}
\end{proof}

Thus, this property holds asymptotically as soon as $\inf \Mcal$ tends to infinity and $\sup_{M \in \Mcal} \sigma(M)$ tends to zero.

\subsection{Estimator and oracle inequality}
\label{sec_lepski_th}

Let us now introduce our selection procedure. This method and the following theorem are based on the approach of \cite{goldenshluger2011bandwidth}, but do not require any assumption on the structure of the estimators, provided a variance bound such as Equation~\eqref{eq_Halt} holds.

For each $k \in \Xcal$ and $M \in \Mcal$, let
\begin{equation*}
A_k(M) = \underset{M' \in \Mcal}{\sup} \left\{ \left\| \hat{f}^{(M')}_k - \hat{f}^{(M \wedge M')}_k \right\|_2 - \sigma(M') \right\}.
\end{equation*}
$A_k(M)$ serves as a replacement for the bias of the estimator $\hat{f}^{(M)}_k$, as can be seen in Equation (\ref{eq_A_is_the_bias}). This comes from the fact that for large $M'$, the quantity $\| \hat{f}^{(M')}_k - \hat{f}^{(M)}_k \|_2$ is upper bounded by the variances $\| \hat{f}^{(M')}_k - f^{*,(M')}_k \|_2$ and $\| \hat{f}^{(M)}_k - f^{*,(M)}_k \|_2$ (which are bounded by $\sigma(M')/2$) plus the bias $\| f^{*,(M)}_k - f^*_k \|_2$. Thus, only the bias term remains after substracting the variance bound $\sigma(M')$.

Then, for all $k \in \Xcal$, select a model through the bias-variance tradeoff
\begin{equation*}
\hat{M}_k \in \underset{M \in \Mcal}{\argmin} \{ A_k(M) + 2 \sigma(M) \}
\end{equation*}
and finally take
\begin{equation*}
\hat{f}_k = \hat{f}^{(\hat{M}_k)}_k.
\end{equation*}

The following theorem shows an oracle inequality on this estimator.
\begin{theorem}
\label{th_oracleLepski}
Let $\epsilon \geq 0$ and assume equation (\ref{eq_Halt}) holds for all $k \in \Xcal$ with probability $1 - \epsilon$. Then with probability $1 - \epsilon$,
\begin{equation*}
\forall k \in \Xcal, \quad
\| \hat{f}_k - f^*_{\tau_n(k)} \|_2
	\leq 4 \inf_{M \in \Mcal} \left\{ \| f^{*, (M)}_{\tau_n(k)} - f^*_{\tau_n(k)} \|_2 + \sigma(M, \epsilon) \right\}.
\end{equation*}
\end{theorem}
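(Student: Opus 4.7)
The plan is to follow the classical Goldenshluger--Lepski bias-variance decomposition, with the twist that everything is indexed by the hidden state $k$ via the synchronizing permutation $\tau_n$ from Lemma \ref{lemma_permutation_unique}. I work on the event of probability $1-\epsilon$ on which (\ref{eq_Halt}) holds simultaneously for every $M \in \Mcal$ with a common $\tau_n$. Fix $k \in \Xcal$ and an arbitrary competitor $M \in \Mcal$, and abbreviate $B(M) = \|f^{*,(M)}_{\tau_n(k)} - f^*_{\tau_n(k)}\|_2$.

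First I would apply the triangle inequality with $\hat f^{(M \wedge \hat M_k)}_k$ and $\hat f^{(M)}_k$ as intermediate points, splitting $\|\hat f_k - f^*_{\tau_n(k)}\|_2$ into three summands $T_1 + T_2 + T_3$. The sup in the definition of $A_k(M)$, specialized at $M' = \hat M_k$, gives $T_1 \leq A_k(M) + \sigma(\hat M_k)$; symmetrically, specializing the sup in $A_k(\hat M_k)$ at $M' = M$ gives $T_2 \leq A_k(\hat M_k) + \sigma(M)$. The minimality $A_k(\hat M_k) + 2\sigma(\hat M_k) \leq A_k(M) + 2\sigma(M)$ then consolidates these into $T_1 + T_2 \leq 2 A_k(M) + 3\sigma(M)$. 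The last summand obeys $T_3 \leq \sigma(M)/2 + B(M)$ by (\ref{eq_Halt}) and a triangle inequality.

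The remaining, most delicate step is to show $A_k(M) \leq B(M)$, which is where nestedness of the $\Pfrak_M$'s enters. I split the sup defining $A_k(M)$ according to whether $M' \leq M$ or $M' > M$. In the first case $M \wedge M' = M'$, so the bracket reduces to $-\sigma(M') \leq 0$. In the second, $M \wedge M' = M$, and a triangle inequality together with (\ref{eq_Halt}) bounds the difference of estimators by $\sigma(M')/2 + \|f^{*,(M')}_{\tau_n(k)} - f^{*,(M)}_{\tau_n(k)}\|_2 + \sigma(M)/2$. Since $\Pfrak_M \subseteq \Pfrak_{M'}$, Pythagoras yields $\|f^{*,(M')}_{\tau_n(k)} - f^{*,(M)}_{\tau_n(k)}\|_2 \leq B(M)$, and after subtracting $\sigma(M')$ and using the monotonicity of $\sigma$ the surviving contribution is $\leq B(M)$. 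Assembling everything gives $\|\hat f_k - f^*_{\tau_n(k)}\|_2 \leq 3 B(M) + \tfrac{7}{2}\sigma(M) \leq 4\bigl(B(M) + \sigma(M)\bigr)$, and taking the infimum over $M$ concludes.

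The main obstacle is precisely the bound $A_k(M) \leq B(M)$: without nestedness one would be stuck with the coarser estimate $\|f^{*,(M\wedge M')}_{\tau_n(k)} - f^*_{\tau_n(k)}\|_2$, which typically exceeds $B(M)$, so $A_k(M)$ would fail to serve as a clean bias proxy at index $M$. The rest of the argument is formal juggling between the triangle inequality, the definition of $A_k$, and the selection criterion.
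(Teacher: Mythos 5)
Your proof is correct and follows essentially the same route as the paper's: the same triangle-inequality decomposition through $\hat f^{(M\wedge\hat M_k)}_k$ and $\hat f^{(M)}_k$, the same use of the sup defining $A_k$ at $M'=\hat M_k$ and $M'=M$ together with the minimality of $\hat M_k$, and the same key step $A_k(M)\leq \|f^{*,(M)}_{\tau_n(k)}-f^*_{\tau_n(k)}\|_2$ via nestedness and contractivity of the projections (your case split $M'\leq M$ versus $M'>M$ is just a more explicit rendering of the paper's argument). Your bookkeeping even yields the marginally sharper constant $3B(M)+\tfrac{7}{2}\sigma(M)$, which is still absorbed into $4(B(M)+\sigma(M))$.
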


\begin{proof}
We restrict ourselves to the event of probability at least $1-\epsilon$ where equation (\ref{eq_Halt}) holds for all $k \in \Xcal$.

The first step consists in decomposing the total error: for all $M \in \Mcal$ and $k \in \Xcal$,
\begin{align*}
\left\| \hat{f}^{(\hat{M}_k)}_k - f^*_{\tau_n(k)} \right\|_2
	\leq& \left\| \hat{f}^{(\hat{M}_k)}_k - \hat{f}^{(\hat{M}_k \wedge M)}_k \right\|_2
		+ \left\| \hat{f}^{(\hat{M}_k \wedge M)}_k - \hat{f}^{(M)}_k \right\|_2 \\
		&+ \left\| \hat{f}^{(M)}_k - f^{*, (M)}_{\tau_n(k)} \right\|_2
		+ \left\| f^{*, (M)}_{\tau_n(k)} - f^*_{\tau_n(k)} \right\|_2.
\end{align*}
From now on, we will omit the subscripts $k$ and $\tau_n(k)$. Using equation (\ref{eq_Halt}) and the definition of $A(M)$ and $\hat{M}$, one gets
\begin{align*}
\left\| \hat{f}^{(\hat{M})} - f^* \right\|_2
	\leq& (A(M) + \sigma(\hat{M}))
		+ (A(\hat{M}) + \sigma(M)) \\
		&+ \sigma(M)
		+ \left\| f^{*, (M)} - f^* \right\|_2 \\
	\leq& 2 A(M) + 4 \sigma(M) + \left\| f^{*, (M)} - f^* \right\|_2.
\end{align*}
Then, notice that $A(M)$ can be bounded by
\begin{align*}
A(M) \leq& \sup_{M'} \left\{
	\left\| \hat{f}^{(M')} - f^{*, (M')} \right\|_2
	+ \left\| \hat{f}^{(M \wedge M')} - f^{*, (M \wedge M')} \right\|_2 - \sigma(M') \right\} \\
	&+ \sup_{M'} \left\| f^{*, (M')} - f^{*, (M \wedge M')} \right\|_2.
\end{align*}
Since $\sigma$ is nondecreasing, $\sigma(M \wedge M') \leq \sigma(M')$, so that the first term is upper bounded by zero thanks to equation (\ref{eq_Halt}). The second term can be controlled since the orthogonal projection is a contraction. This leads to
\begin{equation}
\label{eq_A_is_the_bias}
A(M) \leq \left\| f^* - f^{*, (M)} \right\|_2,
\end{equation}
which is enough to conclude.
\end{proof}

\begin{remark}
The oracle inequality also holds when taking
\begin{equation*}
A_k(M) = \underset{M' \geq M}{\sup} \left\{ \left\| \hat{f}^{(M')}_k - \hat{f}^{(M)}_k \right\|_2 - \sigma(M') \right\}_+.
\end{equation*}
\end{remark}

\begin{remark}
Note that the selected $\hat{M}_k$ implicitely depends on the probability of error $\epsilon$ through the penalty $\sigma$.

In the asymptotic setting, we take $\epsilon$ as a function of $n$, so that $\hat{M}_k$ is a function of $n$ only. This will be used to get rid of $\epsilon$ when proving that the estimators reach the minimax rates of convergence.
\end{remark}

\section{Plug-in estimators and theoretical guarantees}
\label{sec_auxiliary_methods}

In this section, we introduce two methods to construct families of estimators of the emission densities. We show that they satisfy assumption \textbf{[H$(\epsilon)$]} for a given variance bound $\sigma$.

In Section \ref{sec_framework}, we introduce the assumptions we will need for both methods. Section \ref{sec_spectral_th} is dedicated to the spectral estimator and Section \ref{sec_leastsquares_th} to the least squares estimator.

\subsection{Framework and assumptions}
\label{sec_framework}

Recall that we approximate $\Lbf^2(\Ycal, \mu)$ by a nested family of finite-dimensional subspaces $(\Pfrak_M)_{M \in \Mcal}$ such that their union is dense in $\Lbf^2(\Ycal, \mu)$ and write $f^{*,(M)}_k$ the orthogonal projection of $f^*_k$ on $\Pfrak_M$ for all $k \in \Xcal$ and $M \in \Mcal$. We assume that $\Mcal \subset \Nbb$ and that the space $\Pfrak_M$ has dimension $M$. A typical way to construct such spaces is to take $\Pfrak_M$ spanned by the first $M$ vectors of an orthonormal basis.

Both methods will construct an estimator of the emission densities for each model of this family. These estimators will then be plugged in the state-by-state selection method of Section \ref{sec_lepski_th}, which will select one model for each state of the HMM.

We will need the following assumptions. 
\begin{description}
\item[\textbf{[HX]}] $(X_j)_{j \geq 1}$ is a stationary ergodic Markov chain with parameters $(\pi^*, \Qbf^*)$;

\item[\textbf{[Hid]}] $\Qbf^*$ is invertible and the family $\fbf^*$ is linearly independent.
\end{description}
The ergodicity assumption in \textbf{[HX]} is standard in order to obtain convergence results. In this case, the initial distribution is forgotten exponentially fast, so that the HMM will essentially behave like a stationary process after a short period of time. For the sake of simplicity, we assume the Markov chain to be stationary.

\textbf{[Hid]} appears in identifiability results, see for instance \cite{GCR15} and Theorem \ref{th_identifiabilite}. It is sufficient to ensure identifiability of the HMM from the law of three consecutive observations. Note that it is in general not possible to recover the law of a HMM from two observations (see for instance Appendix G of \cite{AHK12}), so that three is actually the minimum to obtain general identifiability.

\subsection{The spectral method}
\label{sec_spectral_th}

{
\begin{algorithm}[!h]
  \caption{Spectral estimation of the emission densities of a HMM (short version)}
\label{alg:Spectral}
  \SetAlgoLined
  \KwData{A sequence of observations $(Y_{1}, \dots ,Y_{n+2})$, two dimensions $m \leq M$, an orthonormal basis $(\varphi_1, \dots ,\varphi_M)$ and number of retries $r$.}
  \KwResult{Spectral estimators $(\hat f^{(M,r)}_k)_{k \in \Xcal}$.}
    \BlankLine
\begin{enumerate}[{\bf [Step 1]}]
\item Consider the following empirical estimators: for any $a, c \in [m]$ and $b \in [M]$,
\begin{itemize}
\item $\hat{\Mbf}_{m,M,m}(a,b,c):= \frac{1}{n} \sum_{s=1}^{n}\varphi_{a}(Y_{s})\varphi_{b}(Y_{s+1})\varphi_{c}(Y_{s+2})$
\item $\hat{\Pbf}_{m,m}(a,c):= \frac{1}{n} \sum_{s=1}^{n}\varphi_{a}(Y_{s})\varphi_{c}(Y_{s+2})$.
\end{itemize}

\item
Let $\hat\Ubf_m$ be the $m \times K$ matrix of orthonormal left singular vectors of $\hat\Pbf_{m,m}$ corresponding to its top $K$ singular values. $\hat\Ubf_m$ can be seen as a projection. Denote by $\Pbf'$ and $\Mbf'(\cdot,b,\cdot)$ the projected tensors, defined by $\Pbf' = \hat\Ubf_m^\top \hat\Pbf_{m,m} \hat\Ubf_m$ and likewise for $\Mbf'$.

\item 
Form the matrices $\Bbf(b) := (\Pbf')^{-1} \Mbf'$ for all $b \in [M]$.

\item Construct a matrix $\hat\Obf$ by taking the best approximate simultaneous diagonalization of all $\Bbf(b)$ among $r$ attempts: for all $b \in [M]$, $\Bbf(b) \approx \Rbf \Diag[\hat\Obf(b,\cdot)] \Rbf^{-1}$ for some matrix $\Rbf$ (see details in Algorithm~\ref{alg:Spectral_complet}, in Appendix~\ref{app_spectral}).

\item Define the emission densities estimators $\hat\fbf^{(M,r)} := (\hat f^{(M,r)}_k)_{k \in \Xcal}$ by: for all $k \in \Xcal$, $\hat f^{(M,r)}_k := \sum_{b=1}^M \hat{\Obf}(b,k) \varphi_b$.
\end{enumerate}
\end{algorithm}}

Algorithm \ref{alg:Spectral} is a variant of the spectral algorithm introduced in \cite{dCGLLC15}. Unlike the original one, it is able to reach the minimax rate of convergence thanks to two improvements. The first one consists in decomposing the joint density on different models, hence the use of two dimensions $m$ and $M$. The second one consists in trying several randomized joint diagonalizations instead of just one, and selecting the best one, hence the parameter $r$. These additional parameters do not actually add much to the complexity of the algorithm: in theory, the choice $m, r \approx \log(n)$ is fine (see Corollary \ref{cor_spectral_lepski_rate}), and in practice, any large enough constant works, see Section \ref{sec_simulations} for more details.

For all $M \in \Mcal$, let $(\varphi_1^M, \dots, \varphi_M^M)$ be an orthonormal basis of $\Pfrak_M$. Let
\begin{align*}
\eta_3(m,M)^2 := \sup_{y, y' \in \Ycal^3} \sum_{a,c = 1}^m \sum_{b=1}^M 
		( \varphi_a^m(y_1) \varphi_b^M(y_2) \varphi_c^m(y_3)
		- \varphi_a^m(y'_1) \varphi_b^M(y'_2) \varphi_c^m(y'_3)
	)^2.
\end{align*}

The following theorem follows the proof of Theorem 3.1 of \cite{dCGLLC15}, with modifications that allow to control the error of the spectral estimators in expectation and are essential to obtain the right rates of convergence in Corollary \ref{cor_spectral_lepski_rate}.
\begin{theorem}
\label{th_garantieSpectral}
Assume \textbf{[HX]} and \textbf{[Hid]} hold. Then there exists a constant $M_0$ depending on $\fbf^*$ and constants $C_\sigma$ and $n_1$ depending on $\fbf^*$ and $\Qbf^*$ such that for all $\epsilon \in (0,1)$, for all $m,M \in \Mcal$ such that $M \geq m \geq M_0$ and for all ${n \geq n_1 \eta_3^2(m,M) (-\log \, \epsilon)^2}$, with probability greater than $1 - 6 \epsilon$,
\begin{equation*}
\inf_{\tau \in \Sfrak(K)}  \max_{k \in \Xcal} \| \hat{f}^{(M, \lceil t \rceil)}_k - f^{*, (M)}_{\tau(k)} \|_2^2
	\leq C_\sigma \eta_3^2(m,M) \frac{(-\log \, \epsilon)^2}{n}
\end{equation*}
\end{theorem}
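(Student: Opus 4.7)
The plan is to follow closely the proof of Theorem~3.1 of \cite{dCGLLC15}, with two substantive modifications: (a) decouple the dimension $m$ used in the SVD and inversion steps from the dimension $M$ used to expand the emission densities, which is the reason the final variance scales linearly in $M$ (through the middle mode of $\eta_3(m,M)$) instead of cubically, as would happen if we used $\eta_3(M,M)$; and (b) boost the probability of a successful joint diagonalization from a mere positive constant to $1-\epsilon$ by retaining the best of $r = \lceil C \log(1/\epsilon) \rceil$ random attempts, which I will identify with the $\lceil t \rceil$ in the statement.

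First I would establish concentration of the empirical tensor $\hat{\Mbf}_{m,M,m}$ and the empirical matrix $\hat{\Pbf}_{m,m}$ around their stationary population counterparts. Under \textbf{[HX]} the process $(X_j,Y_j)_{j\geq 1}$ is stationary and geometrically ergodic, so a Bernstein-type inequality for bounded functionals of a geometrically mixing sequence (or a blocking reduction to independent summands) gives, with probability at least $1-2\epsilon$,
\begin{equation*}
\| \hat{\Mbf}_{m,M,m} - \Mbf_{m,M,m} \|_F + \| \hat{\Pbf}_{m,m} - \Pbf_{m,m} \|_F \;\leq\; C\, \eta_3(m,M) \frac{-\log \epsilon}{\sqrt{n}},
\end{equation*}
as soon as $n \geq c\, \eta_3^2(m,M)(-\log\epsilon)^2$. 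I would then propagate this error through the algebraic steps of Algorithm~\ref{alg:Spectral}: by \textbf{[Hid]} there is $M_0$ such that for every $m \geq M_0$ the population matrix $\Pbf_{m,m}$ has rank $K$ with $\sigma_K(\Pbf_{m,m})$ bounded below by a positive constant depending only on $\fbf^*$ and $\Qbf^*$; Weyl's inequality together with Wedin's $\sin\Theta$ theorem control $\hat{\Ubf}_m$ up to an orthogonal rotation, and standard matrix-inversion perturbation controls $(\hat{\Pbf}')^{-1}$. Combining these bounds yields a uniform estimate $\max_{b\in[M]} \| \hat{\Bbf}(b) - \Bbf^*(b) \| \leq C \eta_3(m,M)(-\log\epsilon)/\sqrt{n}$, the uniformity in $b$ being precisely what allows the linear-in-$M$ scaling.

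The main obstacle, and where the parameter $r$ is used, is the joint approximate diagonalization. The population matrices $\Bbf^*(b)$ share a common eigenbasis encoding the coordinates of $\fbf^{*,(M)}$, so a random linear combination $\Cbf(\theta) := \sum_b \theta_b \Bbf^*(b)$ drawn from any suitable absolutely continuous law has simple eigenvalues almost surely; by an anti-concentration argument its spectral gap exceeds a constant $\delta_0 = \delta_0(\fbf^*,\Qbf^*) > 0$ with probability at least some absolute $p > 0$. Retaining, among $r$ independent attempts, the combination whose empirical version $\hat{\Cbf}(\theta)$ has the largest empirical gap then succeeds with probability at least $1 - (1-p)^r \geq 1 - \epsilon$ for $r = \lceil C\log(1/\epsilon)\rceil$. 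On that event, the Bauer--Fike theorem (or a standard perturbation bound for nondefective matrices) recovers the common eigenbasis, uniquely up to a permutation $\tau \in \Sfrak(K)$, with error of order $\| \hat{\Cbf} - \Cbf^* \|/\delta_0$.

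Finally, since $\hat{f}^{(M,r)}_k = \sum_b \hat{\Obf}(b,k)\varphi_b$ in the orthonormal basis, Parseval gives
\begin{equation*}
\max_{k \in \Xcal} \| \hat{f}^{(M,\lceil t \rceil)}_k - f^{*,(M)}_{\tau(k)} \|_2^2 \;=\; \max_{k \in \Xcal} \sum_{b=1}^M (\hat{\Obf}(b,k) - \Obf^*(b,\tau(k)))^2 \;\leq\; C_\sigma\, \eta_3^2(m,M) \frac{(-\log\epsilon)^2}{n},
\end{equation*}
and a union bound over the concentration event and the $r$-retry event fits everything into the $6\epsilon$ probability budget. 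The technical heart, relative to \cite{dCGLLC15}, is upgrading their qualitative generic-diagonalizability argument to a quantitative spectral-gap statement valid with overwhelming probability---this is exactly what the $r$ retries buy---while keeping $m$ and $M$ separated in every perturbation bound, which is what avoids the spurious $M^3$ factor and is essential for obtaining the minimax rate once the estimator is plugged into the state-by-state procedure of Section~\ref{sec_lepski_th}.
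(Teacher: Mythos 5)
Your proposal follows essentially the same route as the paper: the proof there is explicitly a delta to Theorem~3.1 of \cite{dCGLLC15}, consisting of exactly the two modifications you identify\---decoupling the dimensions $m$ and $M$ in the empirical tensors (so that the perturbation bounds scale with $\eta_3(m,M)$ rather than $\eta_3(M,M)$), and replacing the single random rotation by the best of $r$ independent attempts, selected by maximizing the empirical eigenvalue gap, which upgrades the constant-probability eigenvalue-separation lemma (Lemma F.6 of \cite{dCGLLC15}) to one holding with probability $1-e^{-x}$ for $r \asymp x \asymp \log(1/\epsilon)$. The concentration, SVD-perturbation and diagonalization-perturbation machinery you invoke is the same as that underlying the cited proof, so the argument is correct and matches the paper's.
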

\begin{proof}
Proof in Section \ref{sec_preuve_spectral}.
\end{proof}

Note that the constants $n_1$ and $C_\sigma$ depend on $\Qbf^*$ and $\fbf^*$. This dependency will not affect the rates of convergence of the estimators (with respect to the sample size $n$), but it can change the constants of the bounds and the minimum sample size needed to reach the asymptotic regime.

Let us now apply the state-by-state selection method to these estimators. The following corollary shows that it is possible to reach the minimax rate of convergence up to a logarithmic term separately for each state under standard assumptions. Note that we need to bound the resulting estimators by some power of $n$, but this assumption is not very restrictive since $\alpha$ can be arbitrarily large.

\begin{corollary}
\label{cor_spectral_lepski_rate}
Assume \textbf{[HX]} and \textbf{[Hid]} hold. Also assume that $\eta_3^2(m, M) \leq C_\eta m^2 M$ for a constant $C_\eta > 0$ and that for all $k \in \Xcal$, there exists $s_k$ such that $\|f^{*,(M)}_k - f^*_k\|_2 = O(M^{-s_k})$. Then there exists a constant $C_\sigma$ depending on $\fbf^*$ and $\Qbf^*$ such that the following holds.

Let $\alpha > 0$ and $C \geq 2 (1+2\alpha) \sqrt{C_\eta C_\sigma}$. Let $\hat{\fbf}^\text{sbs}$ be the estimators selected from the family $(\hat{\fbf}^{(M, \lceil(1+2\alpha)\log(n)\rceil)})_{M \leq M_{\max}(n)}$ with $M_{\max}(n) = n / \log(n)^5$, $m_M = \log(n)$ and $\sigma(M) = C \sqrt{\frac{M \log(n)^4}{n}}$ for all $M$. Then there exists a sequence of random permutations $(\tau_n)_{n \geq 1}$ such that
\begin{equation*}
\forall k \in \Xcal, \qquad
	\Ebb\left[
		 \left\| (-n^\alpha) \vee (\hat{f}^\text{sbs}_{\tau_n(k)} \wedge n^\alpha) - f^*_k \right\|_2^2
	\right]
		= O\left( \left( \frac{n}{\log(n)^4} \right)^{\frac{-2s_k}{2s_k+1}} \right).
\end{equation*}
\end{corollary}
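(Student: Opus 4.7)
The plan is to verify assumption \textbf{[H$(\epsilon)$]} through Theorem~\ref{th_garantieSpectral}, chain it with the oracle inequality of Theorem~\ref{th_oracleLepski}, optimize the bias--variance balance using the regularity hypothesis, and finally convert the high-probability bound into a bound in expectation using the truncation at $\pm n^\alpha$.

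First I would set $\epsilon_n = n^{-(1+2\alpha)}$, so that $|{-}\log \epsilon_n| = (1+2\alpha)\log n$. Inserting $m = \log n$, $t = \lceil (1+2\alpha)\log n\rceil$ and the polynomial bound $\eta_3^2(\log n, M) \leq C_\eta (\log n)^2 M$ into Theorem~\ref{th_garantieSpectral} yields, for $n$ large enough that $\log n \geq M_0$ and the sample-size condition $n \geq n_1 \eta_3^2 (-\log \epsilon_n)^2$ holds throughout $\Mcal$,
\begin{equation*}
\max_k \bigl\|\hat{f}^{(M,t)}_k - f^{*,(M)}_{\tau(k)}\bigr\|_2 \leq (1+2\alpha)\sqrt{C_\eta C_\sigma}\,\sqrt{M}\,\frac{(\log n)^2}{\sqrt n} \leq \frac{\sigma(M)}{2},
\end{equation*}
where the last step uses the hypothesis $C \geq 2(1+2\alpha)\sqrt{C_\eta C_\sigma}$. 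Hence \textbf{[H$(\epsilon_n)$]} holds (up to a harmless constant factor $6$ coming from the $1-6\epsilon$ bound of Theorem~\ref{th_garantieSpectral}). Lemma~\ref{lemma_permutation_unique} and Theorem~\ref{th_oracleLepski} then produce a random permutation $\tau_n$ and an event $\mathcal{E}_n$ of probability at least $1 - 6\epsilon_n$ on which
\begin{equation*}
\bigl\|\hat{f}^{\text{sbs}}_{\tau_n(k)} - f^*_k\bigr\|_2 \leq 4 \inf_{M \in \Mcal}\bigl\{\|f^{*,(M)}_k - f^*_k\|_2 + \sigma(M)\bigr\}.
\end{equation*}

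Balancing the bias $O(M^{-s_k})$ against $\sigma(M) \asymp \sqrt{M(\log n)^4/n}$ at $M^*_k \asymp (n/(\log n)^4)^{1/(2s_k+1)}$---which for any fixed $s_k > 0$ lies in $\{1,\ldots,M_{\max}(n)\}$ for large $n$---gives the squared $L^2$ rate $O((n/(\log n)^4)^{-2s_k/(2s_k+1)})$ on $\mathcal{E}_n$. To convert this into an expectation bound, I would use that $T(\cdot) = ({-}n^\alpha) \vee (\cdot \wedge n^\alpha)$ is $1$-Lipschitz and that $\|T(f^*_k) - f^*_k\|_2 \to 0$ by dominated convergence, so on $\mathcal{E}_n$ the truncated error $\|T(\hat{f}^{\text{sbs}}_{\tau_n(k)}) - f^*_k\|_2$ is dominated by the rate just obtained. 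On $\mathcal{E}_n^c$ (probability $\leq 6 n^{-(1+2\alpha)}$), the truncated estimator satisfies $\|T(\hat{f}^{\text{sbs}}_{\tau_n(k)}) - f^*_k\|_2^2 = O(n^{2\alpha})$ deterministically (the basis functions being supported on a common set of finite $\mu$-measure), so its contribution to the expectation is $O(n^{-1})$, which is absorbed in the main rate.

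The main technical obstacle is the joint tuning of the two dimension parameters $m$ and $M$ in the spectral bound: $m$ must be small enough that $\sqrt{M(\log n)^4/n}$ dominates the spectral variance $\sqrt{C_\eta C_\sigma}\, m\sqrt{M}\log n/\sqrt n$, yet large enough that $m \geq M_0$. The prescribed choice $m = \log n$, combined with the explicit constant $C \geq 2(1+2\alpha)\sqrt{C_\eta C_\sigma}$, is precisely what reconciles both constraints while keeping the extra logarithmic factor in $\sigma(M)$ under control. A secondary check is that $M^*_k \leq M_{\max}(n) = n/(\log n)^5$, which is automatic for any fixed $s_k > 0$ and $n$ sufficiently large.
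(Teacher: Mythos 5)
Your proposal is correct and follows essentially the same route as the paper: verify \textbf{[H$(\epsilon_n)$]} with $\epsilon_n = n^{-(1+2\alpha)}$ via Theorem~\ref{th_garantieSpectral} and the bound $\eta_3^2(\log n, M) \leq C_\eta (\log n)^2 M$, chain Lemma~\ref{lemma_permutation_unique} with the oracle inequality of Theorem~\ref{th_oracleLepski}, balance $M^{-s_k}$ against $\sigma(M)$ at $M \asymp (n/(\log n)^4)^{1/(2s_k+1)}$, and absorb the complement event's $O(n^{2\alpha})$ contribution using the truncation, exactly as in the paper's argument. The only cosmetic difference is that you justify the passage from the raw to the truncated error on the good event slightly more explicitly (via the $1$-Lipschitz property of the clipping map), which the paper leaves implicit.
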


The novelty of this result is that each emission density is estimated with its own rate of convergence: the rate $\frac{-s_k}{2s_k+1}$ is different for each emission density, even though the original spectral estimators did not handle them separately. This is due to our state-by-state selection method.

Moreover, it is able to reach the minimax rate for each density in an adaptive way. For instance, in the case of a $\beta$-Hölder density on $\Ycal = [0,1]^D$ (equipped with a trigonometric basis), one can easily check the control of $\eta_3$, and the control $\|f^{*,(M)}_k - f^*_k\|_2 = O(M^{- \beta / D})$ follows from standard approximation results, see for instance \cite{devore1993constructive}. Thus, our estimators converge with the rate $(n/\log(n)^4)^{-2 \beta / (2 \beta + D)}$ to this density: this is the minimax rate up to a logarithmic factor.

\begin{remark}
\label{remark_reordering_for_asymptotics}
By aligning the estimators like in Section \ref{sec_Lepski_framework}, one can replace the sequence of permutations in Corollary \ref{cor_spectral_lepski_rate} by a single permutation, in other words there exists a random permutation $\tau$ which does not depend on $n$ such that
\begin{equation*}
\forall k \in \Xcal, \qquad
	\Ebb\left[
		 \left\| (-n^\alpha) \vee (\hat{f}^\text{sbs}_{\tau(k)} \wedge n^\alpha) - f^*_k \right\|_2^2
	\right]
		= O\left( \left( \frac{n}{\log(n)^4} \right)^{\frac{-2s_k}{2s_k+1}} \right).
\end{equation*}
This means that the sequence $(\hat{f}^\text{sbs}_k)_{n \geq 1}$ is an adaptive rate-minimax estimator of $f^*_k$\---or more precisely of one of the emission densities $(f^*_{k'})_{k' \in \Xcal}$, but since the distribution of the HMM is invariant under relabelling of the hidden states, one can assume the limit to be $f^*_k$ without loss of generality\---up to a logarithmic term.
\end{remark}

At this point, it is important to note that the choice of the constant $C \geq 2(1+2 \alpha) \sqrt{C_\eta C_\sigma}$ depends on the hidden parameters of the HMM and as such is unknown. This penalty calibration problem is very common in the model selection framework and can be solved in practice using methods such as the slope heuristics or the dimension jump method which have been proved to be theoretically valid in several cases, see for instance \cite{BMM12a} and references therein. We use the dimension jump method and explain its principle and implementation in Section \ref{sec_penalty_calibration}.

\begin{proof}
Using Theorem \ref{th_garantieSpectral}, one gets that for all $n$ and for all $M \in \Mcal$ such that $n \geq n_1 \eta_3^2(m_M,M) (1+2\alpha)^2 \log(n)^2$, with probability $1 - 6n^{-1-2\alpha}$,
\begin{align*}
\inf_{\tau \in \Sfrak(K)}  \max_{k \in \Xcal} \| \hat{f}^{(M, \lceil t \rceil)}_k - f^{*, (M)}_{\tau(k)} \|_2^2
	\leq& C_\sigma \eta_3^2(m_M,M) \frac{(1+2\alpha)^2 \log(n)^2}{n} \\
	\leq& (1+\alpha)^2 C_\sigma C_\eta M \frac{\log(n)^4}{n} \\
	\leq& \frac{\sigma(M)^2}{4}
\end{align*}
where $\sigma(M) = C \sqrt{\frac{M\log(n)^4}{n}}$ with $C$ such that $C^2 \geq 4 (1+2\alpha)^2 C_\sigma C_\eta$.

The condition on $M$ becomes
\begin{align*}
n \geq n_1 \log(n)^4 M (1+2\alpha)^2
\end{align*}
and is asymptotically true for all $M \leq M_{\max}(n)$ as soon as $M_{\max}(n) = o(n / \log(n)^4)$.

Thus, \textbf{[H($6n^{-(1+2\alpha)}$)]} is true for the family $(\hat{\fbf}^{(M, \lceil(1+2\alpha)\log(n)\rceil)})_{M \leq M_{\max}(n)}$. Note that the assumption $M_{\max}(n) = o(n / \log(n)^4)$ also implies that there exists $M_1$ such that for $n$ large enough, Lemma \ref{lemma_permutation_unique} holds for all $M \geq M_1$, so that Theorem \ref{th_oracleLepski} implies that for $n$ large enough, there exists a permutation $\tau_n$ such that with probability $1 - 6n^{-(1+2\alpha)}$, for all $k \in \Xcal$,
\begin{align*}
\| \hat{f}^\text{sbs}_{\tau_n(k)} - f^*_k \|_2
	\leq& \; 4 \inf_{M_1 \leq M \leq M_{\max}} \{  \| f^{*,(M)}_k - f^*_k \|_2 + \sigma(M) \} \\
	=& \; O \left( \inf_{M_1 \leq M \leq M_{\max}} \left\{ M^{-s_k} + \sqrt{\frac{M \log(n)^4}{n}} \right\} \right) \\
	=& \; O \left( \left( \frac{n}{\log(n)^4} \right)^{-s_k/(1+2s_k)} \right),
\end{align*}
where the tradeoff is reached for $M = (\frac{n}{\log(n)^4})^{1/(1+2s_k)}$, which is in $[M_1, M_{\max}(n)]$ for $n$ large enough.

Finally, write $A$ the event of probability smaller than $6n^{-(1+2\alpha)}$ where \textbf{[H($6n^{-(1+\alpha)}$)]} doesn't hold, then for $n$ large enough and for all $k \in \Xcal$,
\begin{align*}
\Ebb\left[
	 \left\| (-n^\alpha) \vee (\hat{f}^\text{sbs}_{\tau_n(k)} \wedge n^\alpha) - f^*_k \right\|_2^2
	\right]
	\leq& \; \Ebb\left[ \one_A
		\left\| \hat{f}^\text{sbs}_{\tau_n(k)} - f^*_k \right\|_2^2
	\right]
		+ \Ebb\left[ \one_{A^c}
			( n^{2\alpha} + \| f^*_k \|_2^2 )
	\right] \\
	=& \; O \left( \left( \frac{n}{\log(n)^4} \right)^{-2s_k/(1+2s_k)} \right)
		+ O \left( \frac{n^{2\alpha} + \| f^*_k \|_2^2}{n^{1+2\alpha}} \right) \\
	=& \; O \left( \left( \frac{n}{\log(n)^4} \right)^{-2s_k/(1+2s_k)} \right).
\end{align*}
\end{proof}

\subsection{The penalized least squares method}
\label{sec_leastsquares_th}

Let $\Fcal$ be a subset of $\Lbf^2(\Ycal, \mu)$. We will need the following assumption on $\Fcal$ in order to control the deviations of the estimators:
\begin{description}
\item[\textbf{[HF]}] $\fbf^* \in \Fcal^{K^*}$, $\Fcal$ is closed under projection on $\Pfrak_M$ for all $M \in \Mcal$ and
\begin{equation*}
\forall f \in \Fcal, \quad
\begin{cases}
\| f \|_\infty \leq \cinf \\
\| f \|_2 \leq \cquad
\end{cases}
\end{equation*}
with $\cinf$ and $\cquad$ larger than 1.
\end{description}
A simple way to construct such a set $\Fcal$ when $\mu$ is a finite measure is to take the sets $(\Pfrak_M)_M$ spanned by the first $M$ vectors of an orthonormal basis $(\varphi_i)_{i \geq 0}$ whose first vector $\varphi_0$ is proportional to $\mathbf{1}$. Then any set $\Fcal$ of densities such that $\int f d\mu = 1$, $\sum_i \langle f, \varphi_i \rangle^2 \leq \cquad$ and $\sum_i | \langle f, \varphi_i \rangle | \| \varphi_i \|_\infty \leq \cinf$ for given constants $\cquad$ and $\cinf$ and for all $f \in \Fcal$ satisfies \textbf{[HF]}.\\

When $\Qbf \in \Rbb^{K \times K}$, $\pi \in \Rbb^K$ and $\fbf \in (\Lbf^2(\Ycal, \mu))^K$, let
\begin{equation*}
g^{\pi, \Qbf, \fbf}(y_1, y_2, y_3) = \sum_{k_1, k_2, k_3 = 1}^K \pi(k_1) \Qbf(k_1, k_2) \Qbf(k_2, k_3) f_{k_1}(y_1) f_{k_2}(y_2) f_{k_3}(y_3).
\end{equation*}
When $\pi$ is a probability distribution, $\Qbf$ a transition matrix and $\fbf$ a $K$-uple of probability densities, then $g^{\pi, \Qbf, \fbf}$ is the density of the first three observations of a HMM with parameters $(\pi, \Qbf, \fbf)$. The motivation behind estimating $g^{\pi, \Qbf, \fbf}$ is that it allows to recover the true parameters under the identifiability assumption \textbf{[Hid]}, as shown in the following theorem.

Let $\Qcal$ be the set of transition matrices on $\Xcal$ and $\Delta$ the set of probability distributions on $\Xcal$. For a permutation $\tau \in \Sfrak(K)$, write $\Pbb_\tau$ its matrix (that is the matrix defined by $\Pbb_\tau(i,j) = \one_{\{j = \tau(i)\}}$). Finally, define the distance on the HMM parameters
\begin{multline*}
\dperm((\pi_1, \Qbf_1, \fbf_1), (\pi_2, \Qbf_2, \fbf_2))^2 \\
	= \inf_{\tau \in \Sfrak(K)} \left\{
		\| \pi_1 - \Pbb_\tau \pi_2 \|_2^2 + \| \Qbf_1 - \Pbb_\tau \Qbf_2 \Pbb_\tau^\top \|_F^2 + \sum_{k \in \Xcal} \| f_{1,k} - f_{2,\tau(k)} \|_2^2
	\right\}.
\end{multline*}
This distance is invariant under permutation of the hidden states. This corresponds to the fact that a HMM is only identifiable up to relabelling of its hidden states.

\begin{theorem}[Identifiability]
\label{th_identifiabilite}
Let $(\pi^*, \Qbf^*, \fbf^*) \in \Delta \times \Qcal \times (\Lbf^2(\Ycal, \mu))^K$ such that $\pi^*_x > 0$ for all $x \in \Xcal$ and \textbf{[Hid]} holds. Then for all $(\pi, \Qbf, \fbf) \in \Delta \times \Qcal \times (\Lbf^2(\Ycal, \mu))^K$,
\begin{equation*}
(g^{\pi, \Qbf, \fbf} = g^{\pi^*, \Qbf^*, \fbf^*}) \; \Rightarrow \; \dperm((\pi, \Qbf, \fbf),(\pi^*, \Qbf^*, \fbf^*)) = 0.
\end{equation*}
\end{theorem}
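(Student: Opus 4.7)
The plan is to interpret $g^{\pi, \Qbf, \fbf}$ as a rank-$K$ three-way tensor, apply a Kruskal-type uniqueness theorem to match both decompositions up to permutation and scaling, and then eliminate the scaling ambiguity using the stochasticity of $\pi$ and $\Qbf$. I would start by grouping the triple sum around the middle index $k_2$: setting $\alpha_k := \sum_j \pi_j \Qbf_{j, k} f_j$ and $\beta_k := \sum_j \Qbf_{k, j} f_j$, one obtains the CP-type decomposition
\[
g^{\pi, \Qbf, \fbf}(y_1, y_2, y_3) = \sum_{k=1}^K \alpha_k(y_1) \, f_k(y_2) \, \beta_k(y_3).
\]
Under $\pi^*_x > 0$ and \textbf{[Hid]}, both $\Diag(\pi^*) \Qbf^*$ and $\Qbf^{*\top}$ are invertible, so $\alpha^*$, $f^*$, $\beta^*$ are linearly independent families of $K$ elements of $\Lbf^2(\Ycal, \mu)$. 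The Schur product theorem makes $G(\alpha^*) \odot G(\beta^*)$ positive definite, and a slice argument on the mode-$2$ unfolding shows that $g^*$ has tensor rank exactly $K$.

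Because $g = g^*$ has rank $K$ and $\sum_k \alpha_k \otimes f_k \otimes \beta_k$ is a $K$-term decomposition of it, each of the families $\alpha$, $f$, $\beta$ must also be linearly independent, since otherwise the corresponding mode-$i$ rank of $g$ would drop below $K$. Restricting to the finite-dimensional subspace $V := \Span(\fbf) + \Span(\fbf^*)$, to which all factors belong, I would then invoke Kruskal's uniqueness theorem for CP decompositions in $V^{\otimes 3}$: the Kruskal condition $3K \geq 2K + 2$ holds for $K \geq 2$, and the case $K = 1$ is immediate since $\Delta$ and $\Qcal$ are singletons. This yields a permutation $\tau \in \Sfrak(K)$ and scalars $a_k, b_k, c_k$ with $a_k b_k c_k = 1$ such that
\[
\alpha_{\tau(k)} = a_k \alpha^*_k, \qquad f_{\tau(k)} = b_k f^*_k, \qquad \beta_{\tau(k)} = c_k \beta^*_k.
\]

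The last step is to show that $a_k = b_k = c_k = 1$. Substituting the $f$-relation into the definitions of $\alpha_{\tau(k)}$ and $\beta_{\tau(k)}$ and equating coefficients against the linearly independent family $\fbf^*$ produces the entrywise identities
\[
\pi_{\tau(j)} \Qbf_{\tau(j), \tau(k)} \, b_j = a_k \pi^*_j \Qbf^*_{j, k},
\qquad
\Qbf_{\tau(k), \tau(j)} \, b_j = c_k \Qbf^*_{k, j}.
\]
Using the second identity with indices swapped to express $\Qbf_{\tau(j), \tau(k)}$ and substituting into the first gives $\pi_{\tau(j)} c_j b_j = a_k b_k \, \pi^*_j$ whenever $\Qbf^*_{j, k} > 0$, so $a_k b_k$ must be constant in $k$; combined with $a_k b_k c_k = 1$, this forces $c_k$ to be a common constant $c$. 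The row-sum constraint $\sum_j \Qbf_{\tau(k), \tau(j)} = 1$ then reads $\Qbf^* v = \gamma \mathbf{1}$ with $v_j := 1/b_j$ and $\gamma := 1/c$, which combined with $\Qbf^* \mathbf{1} = \mathbf{1}$ and the invertibility of $\Qbf^*$ gives $b_j \equiv 1/\gamma$; finally, $\sum_j \pi_j = 1$ forces $\gamma^3 = 1$, hence $a_k = b_k = c_k = 1$. The three relations then collapse to $f_{\tau(k)} = f^*_k$, $\pi_{\tau(j)} = \pi^*_j$, and $\Qbf_{\tau(k), \tau(j)} = \Qbf^*_{k, j}$, i.e.\ $\dperm((\pi, \Qbf, \fbf), (\pi^*, \Qbf^*, \fbf^*)) = 0$ for the permutation $\tau^{-1}$. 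The main obstacle will be the CP uniqueness statement in an infinite-dimensional function space, which I would bypass by reducing everything to Kruskal's theorem in the finite-dimensional $V^{\otimes 3}$; the rest is routine linear algebra.
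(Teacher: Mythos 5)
Your strategy---rewrite $g^{\pi,\Qbf,\fbf}$ as the $K$-term CP decomposition $\sum_k \alpha_k\otimes f_k\otimes\beta_k$ around the middle coordinate, reduce to the finite-dimensional space $V=\Span(\fbf)+\Span(\fbf^*)$, and invoke Kruskal's uniqueness theorem---is sound and is essentially the same mathematics as the paper's one-line proof, which cites the simultaneous-diagonalization argument behind the spectral algorithm of de Castro et al. The linear independence of the three starred factor families (so that all Kruskal ranks equal $K$ and $3K\ge 2K+2$ for $K\ge 2$), the reduction to $V^{\otimes 3}$, and the two coefficient identities you extract from $f_{\tau(k)}=b_kf^*_k$ by testing against the independent family $\fbf^*$ are all correct.

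The gap is in the de-scaling step. From
\begin{equation*}
\pi_{\tau(j)}\,c_j b_j \;=\; a_k b_k\,\pi^*_j \qquad\text{whenever }\Qbf^*_{j,k}\neq 0,
\end{equation*}
you conclude that $a_kb_k$ is constant in $k$. That only follows if the bipartite graph on rows and columns induced by the support of $\Qbf^*$ is connected; \textbf{[Hid]} gives only invertibility, which does not imply this. For $\Qbf^*=I_K$ the identity carries information only when $j=k$ and the argument stalls. Worse, this is not a defect you can write around: with $\Qbf^*=\Qbf=I_2$, $\pi^*=(1/2,1/2)$, $f_k=\lambda_kf^*_k$ and $\pi_k=\pi^*_k\lambda_k^{-3}$ subject to $\sum_k\pi^*_k\lambda_k^{-3}=1$ (e.g.\ $\lambda_1^3=2$, $\lambda_2^3=2/3$), one gets $g^{\pi,\Qbf,\fbf}=g^{\pi^*,\Qbf^*,\fbf^*}$ while $\dperm((\pi,\Qbf,\fbf),(\pi^*,\Qbf^*,\fbf^*))>0$. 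So for unconstrained $\fbf\in(\Lbf^2(\Ycal,\mu))^K$ the scaling ambiguity is genuine, and the statement needs an extra ingredient to pin it down: either all entries of $\Qbf^*$ positive (or, more generally, the connectivity above), in which case your chain $a_kb_k\equiv\mu$, $c_k\equiv 1/\mu$, $\Qbf^*v=\mu\one=\mu\Qbf^*\one\Rightarrow b_j\equiv 1/\mu$, $\sum_j\pi_{\tau(j)}=\mu^3=1$ does close; or the normalization $\int f_k\,d\mu=\int f^*_k\,d\mu=1$, which gives $b_k=1$ immediately from $f_{\tau(k)}=b_kf^*_k$, then $c_k=1$ from the row sums and $a_k=1$ from $a_kb_kc_k=1$. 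The latter is how the theorem is actually used in the paper (in the proof of Theorem~\ref{th_minoration_quad} every density integrates to one), and the same normalization is implicitly what fixes the scale in the cited spectral argument; you should add it to your hypotheses, or state explicitly which positivity condition on $\Qbf^*$ you are relying on.
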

\begin{proof}
The spectral algorithm of \cite{dCGLLC15} applied on the finite dimensional space spanned by the components of $\fbf$ and $\fbf^*$ allows to recover all the parameters even when the emission densities are not probability densities and when the Markov chain is not stationary.
\end{proof}

\noindent
Define the empirical contrast
\begin{equation*}
\gamma_n(t) = \| t \|_2^2 - \frac{2}{n} \sum_{j=1}^n t(Z_j)
\end{equation*}
where ${Z_j := (Y_{j}, Y_{j+1}, Y_{j+2})}$ and $(Y_j)_{1 \leq j \leq n+2}$ are the observations. It is a biased estimator of the $\Lbf^2$ loss: for all $t \in (\Lbf^2(\Ycal, \mu))^3$,
\begin{equation*}
\Ebb[\gamma_n(t)] = \| t - g^* \|_2^2 - \| g^* \|_2^2
\end{equation*}
where $g^* = g^{\pi^*, \Qbf^*, \fbf^*}$. Since the bias does not depend on the function $t$, one can hope that the minimizers of $\gamma_n$ are close to minimizers of $\|t - g^*\|_2$. We will show that this is indeed the case.

The least squares estimators of all HMM parameters are defined for each model $\Pfrak_M$ by
\begin{equation*}
(\hat\pi^{(M)}, \hat\Qbf^{(M)}, \hat{\fbf}^{(M)}) \in \underset{\pi \in \Delta, \, \Qbf \in \Qcal, \, \fbf \in (\Pfrak_M \cap \Fcal)^K}{\argmin} \; \gamma_n(g^{\pi, \Qbf, \fbf}).
\end{equation*}
The procedure is summarized in Algorithm \ref{alg:LS}. Note that with the notations of the algorithm,
\begin{equation*}
\gamma_n(g^{\pi, \Qbf, \Obf^\top \Phi}) = \| \Mbf_{(\pi, \Qbf, \Obf)} - \hat\Mbf_M \|_F^2 - \| \hat\Mbf_M \|_F^2.
\end{equation*}

{
\begin{algorithm}[!t]
  \SetAlgoLined
  \KwData{A sequence of observations $(Y_{1}, \dots ,Y_{n+2})$, a dimension $M$ and an orthonormal basis $\Phi = (\varphi_1, \dots ,\varphi_M)$.}
  \KwResult{Least squares estimators $\hat\pi^{(M)}$, $\hat\Qbf^{(M)}$ and $ (\hat{f}^{(M)}_k)_{k \in \Xcal}$.}
    \BlankLine
\begin{enumerate}[{\bf [Step 1]}]
\item Compute the tensor $\hat{\Mbf}_M$ defined by $\hat{\Mbf}_{M}(a,b,c):= \frac{1}{n} \sum_{s=1}^{n}\varphi_{a}(Y_{s})\varphi_{b}(Y_{s+1})\varphi_{c}(Y_{s+2})$ for all $a, b, c \in [M]$.

\item
Find a minimizer $(\hat\pi^{(M)}, \hat\Qbf^{(M)}, \hat{\Obf})$ of $(\pi, \Qbf, \Obf) \longmapsto \| \Mbf_{(\pi, \Qbf, \Obf)} - \hat\Mbf_M \|_F^2$ where
\begin{itemize}
\item $\pi \in \Rbb^K$ is a probability distribution on $\Xcal$, i.e. $\sum_{k \in \Xcal} \pi_k = 1$;
\item $\Qbf \in \Rbb^{K \times K}$ is a transition matrix on $\Xcal$, i.e. $\sum_{k' \in \Xcal} Q(k,k') = 1$ for all $k \in \Xcal$;
\item $\Obf$ is a $M \times K$ matrix such that for all $k \in \Xcal$, $\sum_{b=1}^M \Obf(b,k) \varphi_b \in \Fcal$;
\item $\Mbf_{(\pi, \Qbf, \Obf)} \in \Rbb^{M \times M \times M}$ is defined by $\Mbf_{(\pi, \Qbf, \Obf)}(\cdot, b, \cdot) = \Obf \Diag[\pi] \Qbf \Diag[\Obf(b,\cdot)] \Qbf \Obf^\top$ for all $b \in [M]$.
\end{itemize}

\item
Consider the emission densities estimators $\hat\fbf^{(M)} := (\hat f^{(M)}_k)_{k \in \Xcal}$ defined by for all $k \in \Xcal$, $\hat f^{(M)}_k := \sum_{b=1}^M \hat{\Obf}(b,k) \varphi_b$.
\end{enumerate}
  \caption{Least squares estimation of the emission densities of a HMM}
\label{alg:LS}
\end{algorithm}}

Then, the proof of the oracle inequality of \cite{lehericy2016order} allows to get the following result.
\begin{theorem}
\label{th_oracle}
Assume \textbf{[HF]}, \textbf{[HX]} and \textbf{[Hid]} hold.

Then there exists constants $C$ and $n_0$ depending on $\cquad$, $\cinf$ and $\Qbf^*$ such that for all $n \geq n_0$, for all $t > 0$, with probability greater than ${1 - e^{-t}}$, one has for all $M \in \Mcal$ such that $M \leq n$:
\begin{align*}
\| \hat{g}^{\hat\pi^{(M)}, \hat\Qbf^{(M)}, \hat{\fbf}^{(M)}}
	- g^{\pi^*, \Qbf^*, \fbf^{*, (M)}} \|_2^2 
	&\leq C \Big( \frac{t}{n} + M \frac{\log(n)}{n}
	\Big).
\end{align*}
\end{theorem}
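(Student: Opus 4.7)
The plan is a standard least-squares analysis, made clean by a Pythagorean identity on the HMM manifold and an empirical-process control adapted to dependent observations, following closely the technique of \cite{lehericy2016order}. Write $\hat\theta^{(M)} = (\hat\pi^{(M)}, \hat\Qbf^{(M)}, \hat\fbf^{(M)})$, $\theta^{*,(M)} = (\pi^*, \Qbf^*, \fbf^{*,(M)})$, and $\nu_n(t) = \frac{1}{n}\sum_{j=1}^n t(Z_j) - \Ebb[t(Z_1)]$. By \textbf{[HF]}, $\fbf^{*,(M)} \in (\Pfrak_M \cap \Fcal)^K$, so $\theta^{*,(M)}$ is admissible and $\gamma_n(g^{\hat\theta^{(M)}}) \leq \gamma_n(g^{\theta^{*,(M)}})$. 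Combined with the identity $\Ebb[\gamma_n(t)] = \|t - g^*\|_2^2 - \|g^*\|_2^2$, this yields $\|g^{\hat\theta^{(M)}} - g^*\|_2^2 - \|g^{\theta^{*,(M)}} - g^*\|_2^2 \leq 2\nu_n(g^{\hat\theta^{(M)}} - g^{\theta^{*,(M)}})$. Crucially, since $\fbf \mapsto g^{\pi, \Qbf, \fbf}$ is multilinear in $\fbf$ and orthogonal projection commutes with tensorisation, $g^{\theta^{*,(M)}}$ is exactly the orthogonal projection of $g^*$ onto the linear subspace $\Pfrak_M^{\otimes 3}$, in which $g^{\hat\theta^{(M)}}$ also lies. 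Pythagoras therefore gives $\|g^{\hat\theta^{(M)}} - g^*\|_2^2 - \|g^{\theta^{*,(M)}} - g^*\|_2^2 = \|g^{\hat\theta^{(M)}} - g^{\theta^{*,(M)}}\|_2^2$, so the inequality reduces to
\begin{equation*}
\|g^{\hat\theta^{(M)}} - g^{\theta^{*,(M)}}\|_2^2 \leq 2\nu_n(g^{\hat\theta^{(M)}} - g^{\theta^{*,(M)}}).
\end{equation*}

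Next I would control $\nu_n$ uniformly over the class $\Gcal_M = \{g^{\pi, \Qbf, \fbf} - g^{\theta^{*,(M)}} : (\pi, \Qbf, \fbf) \in \Delta \times \Qcal \times (\Pfrak_M \cap \Fcal)^K\}$. Under \textbf{[HF]} these functions are uniformly bounded in sup and $\Lbf^2$ norm by constants depending only on $\cinf$ and $\cquad$. Although $\Gcal_M$ is contained in the $M^3$-dimensional space $\Pfrak_M^{\otimes 3}$, it is Lipschitz-parameterised by $O(KM + K^2)$ scalars ranging in a bounded set, so its $\Lbf^2$-covering number at scale $\delta$ is at most of order $(C n/\delta)^{O(M)}$ for $K$ fixed. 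Under \textbf{[HX]}, the finite-state chain $(X_j)$ is geometrically ergodic, hence the block process $(Z_j)$ is $\beta$-mixing at exponential rate; applying the Talagrand/Bernstein-type concentration inequality for $\beta$-mixing empirical processes used in \cite{lehericy2016order}, combined with a chaining argument, then yields, with probability at least $1 - e^{-t}$,
\begin{equation*}
\sup_{h \in \Gcal_M} \Bigl\{ \nu_n(h) - \tfrac{1}{4} \|h\|_2^2 \Bigr\} \leq C \, \frac{M\log n + t}{n}.
\end{equation*}
Plugging $h = g^{\hat\theta^{(M)}} - g^{\theta^{*,(M)}} \in \Gcal_M$ into the reduced inequality above and absorbing $\tfrac{1}{2}\|h\|_2^2$ on the left-hand side produces the announced oracle bound $\|h\|_2^2 \leq C'(M\log n + t)/n$.

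The main obstacle is precisely this empirical-process step, which combines two difficulties. The observations $(Z_j)$ are dependent, so standard i.i.d.\ Talagrand tools must be replaced by their $\beta$-mixing analogues (the mixing coefficients decay exponentially under \textbf{[HX]} as $\Qbf^*$ is irreducible aperiodic). The class $\Gcal_M$ is a non-convex manifold of trilinear forms in $\fbf$ rather than a linear space; what saves us is that its effective dimension, driven by the parameter space, is $O(KM)$ and not $O(M^3)$, which is what produces the rate $M\log n /n$ instead of the much slower $M^3\log n/n$ one would get from a naive ambient-space bound. Assumption \textbf{[Hid]} is not used in the concentration step itself; its role is only to guarantee identifiability via Theorem~\ref{th_identifiabilite}, ensuring that the underlying minimisation is well-posed.
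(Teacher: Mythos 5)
Your proposal is correct and follows essentially the same route as the paper, which simply invokes the proof of the oracle inequality of \cite{lehericy2016order}: basic inequality from minimality of $\gamma_n$, reduction via the projection/Pythagoras identity (valid since $g^{\pi^*,\Qbf^*,\fbf^{*,(M)}}$ is the orthogonal projection of $g^*$ onto $\Pfrak_M^{\otimes 3}$), and a localized Talagrand-type bound for $\beta$-mixing processes over a class whose metric entropy is driven by the $O(KM)$ parameters rather than the ambient dimension $M^3$. The only loose end is that the theorem asserts the bound simultaneously for all $M \leq n$ on a single event of probability $1-e^{-t}$, which requires a union bound over the at most $n$ models; this replaces $t$ by $t+\log n$ and is absorbed into the $M\log(n)/n$ term.
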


In order to deduce a control of the error on the parameters\---and in particular on the emission densities\---from the previous result, we will need to assume that the quadratic form derived from the second-order expansion of $(\pi, \Qbf, \fbf) \in \Delta \times \Qcal \times \Fcal^K \longmapsto \| g^{\pi, \Qbf, \fbf} - g^* \|_2^2$ around $(\pi^*, \Qbf^*, \fbf^*)$ is nondegenerate.

It is still unknown whether this nondegeneracy property is true for all parameters $(\pi^*, \Qbf^*, \fbf^*)$ such that \textbf{[Hid]} and \textbf{[HX]} hold. \cite{dCGL15} prove it for $K = 2$ hidden states when only the emission densities are allowed to vary by using brute-force computations. To do so, they introduce an (explicit) polynomial in the coefficients of $\pi^*$, $\Qbf^*$ and of the Gram matrix of $\fbf^*$ and prove that its value is nonzero if and only if the quadratic form is nondegenerate for the corresponding parameters. The difficult part of the proof is to show that this polynomial is always nonzero.

For the expression of this polynomial\---which we will write $H$\---in our setting, we refer to Section \ref{sec_expression_H}. Note that \cite{lehericy2016order} proves that this polynomial $H$ is non identically zero: it is shown that there exists parameters $(\pi, \Qbf, \fbf)$ satisfying \textbf{[HX]} and \textbf{[Hid]} such that $H(\pi, \Qbf, \fbf) \neq 0$, which means that the following assumption is generically satisfied:
\begin{description}
\item[\textbf{[Hdet]}] $H(\pi^*, \Qbf^*, \fbf^*) \neq 0$.
\end{description}

The following result allows to lower bound the $\Lbf^2$ error on the density of three consecutive observations by the error on the parameters of the HMM using this condition.
It is an improvement of Theorem 6 of \cite{dCGL15} and Theorem 9 of \cite{lehericy2016order}. The main difference is that the constant $c^*(\pi^*, \Qbf^*, \fbf^*, \Fcal)$ does not depend on the $\fbf$ around which the parameters are taken
. This is crucial to obtain Corollary \ref{cor_H0_LS}, from which we will deduce \textbf{[H0]}. Note that we do not need $\fbf$ to be in a compact neighborhood of $\fbf^*$. Another improvement is that the constant in the minoration only depends on the true parameters and on the set $\Fcal$.

\begin{theorem}
\label{th_minoration_quad}
\begin{enumerate}
\item
Assume that \textbf{[HF]} holds and that for all $f \in \Fcal$, $\int f d\mu = 1$.

Then there exist a lower semicontinuous function $(\pi^*, \Qbf^*, \fbf^*) \longmapsto c^*(\pi^*, \Qbf^*, \fbf^*, \Fcal)$ that is positive when \textbf{[Hid]} and \textbf{[Hdet]} hold and a neighborhood $\Vcal$ of $\fbf^*$ in $\Fcal^K$ depending only on $\pi^*$, $\Qbf^*$, $\fbf^*$ and $\Fcal$ such that for all $\fbf \in \Vcal$ and for all $\pi \in \Delta$, $\Qbf \in \Qcal$ and $\hbf \in \Fcal^K$,
\begin{equation*}
	\|g^{\pi, \Qbf, \hbf} - g^{\pi^*, \Qbf^*, \fbf}\|_2^2
	\geq c^*(\pi^*, \Qbf^*, \fbf^*, \Fcal) \dperm((\pi, \Qbf, \hbf),(\pi^*, \Qbf^*, \fbf))^2.
\end{equation*}

\item
There exists a continuous function $\epsilon : (\pi^*, \Qbf^*, \fbf^*) \mapsto \epsilon(\pi^*, \Qbf^*, \fbf^*)$ that is positive when \textbf{[Hid]} and \textbf{[Hdet]} hold and such that for all $\pi \in \Delta$, $\Qbf \in \Qcal$ and $\hbf \in (\Lbf^2(\Ycal, \mu))^K$ a $K$-uple of probability densities such that $\dperm((\pi, \Qbf, \hbf),(\pi^*, \Qbf^*, \fbf^*)) \leq \epsilon(\pi^*, \Qbf^*, \fbf^*)$, one has
\begin{equation*}
	\|g^{\pi, \Qbf, \hbf} - g^{\pi^*, \Qbf^*, \fbf^*}\|_2^2
	\geq c_0(\pi^*, \Qbf^*, \fbf^*) \dperm((\pi, \Qbf, \hbf),(\pi^*, \Qbf^*, \fbf^*))^2.
\end{equation*}
where
\begin{multline*}
c_0(\pi^*, \Qbf^*, \fbf^*)
	= \frac{\left(\inf_{k \in \Xcal} \pi^*(k) \right) \sigma_K(\Qbf^*)^4 \sigma_K(G(\fbf^*))^2}{4} \\
	\wedge \frac{H(\pi^*, \Qbf^*, \fbf^*)}{2 (1 \wedge K \| G(\fbf^*) \|_\infty) (3 K^3 (1 \vee \| G(\fbf^*) \|_\infty^4))^{K^2-K/2}}.
\end{multline*}
\end{enumerate}
\end{theorem}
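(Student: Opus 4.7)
The plan is to first establish Part 2 as a sharp local quadratic lower bound obtained from a second-order expansion around $(\pi^*, \Qbf^*, \fbf^*)$, then to bootstrap it to the uniform bound of Part 1 by combining this local estimate---now applied around an arbitrary pivot $\fbf$ close to $\fbf^*$---with a far-field estimate coming from the identifiability Theorem~\ref{th_identifiabilite}.

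For Part 2, the map $(\pi, \Qbf, \hbf) \mapsto g^{\pi, \Qbf, \hbf}$ is polynomial, so I would write $\|g^{\pi, \Qbf, \hbf} - g^{\pi^*, \Qbf^*, \fbf^*}\|_2^2 = \|D \delta\|_2^2 + R(\delta)$, where $\delta = (\pi - \pi^*, \Qbf - \Qbf^*, \hbf - \fbf^*)$ is taken modulo composition with the optimal permutation in $\dperm$, $D$ is the first differential at the true parameters, and $R(\delta) = O(\|\delta\|^3)$. The smallest singular value of $D$ would be bounded by analysing two blocks separately: a \emph{parameter block} acting on $(\pi - \pi^*, \Qbf - \Qbf^*)$ at $\hbf = \fbf^*$, whose conditioning is computed in terms of $\inf_k \pi^*(k)$, $\sigma_K(\Qbf^*)$ and $\sigma_K(G(\fbf^*))$ and yields the first term of $c_0$; and a \emph{density block} acting on $\hbf - \fbf^*$ at $(\pi, \Qbf) = (\pi^*, \Qbf^*)$, whose non-degeneracy is precisely encoded by the polynomial $H(\pi^*, \Qbf^*, \fbf^*)$ recalled in Section~\ref{sec_expression_H}. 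An algebraic lower bound on the Gram determinant of this block, obtained by cofactor expansion, then produces the second term, with $(3K^3 (1 \vee \|G(\fbf^*)\|_\infty^4))^{K^2 - K/2}$ capturing the size of the relevant minors. The radius $\epsilon(\pi^*, \Qbf^*, \fbf^*)$ is chosen so that $|R(\delta)| \leq \tfrac{1}{2} c_0 \|\delta\|^2$, and its continuity in the true parameters follows from that of the underlying bounds.

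For Part 1, the natural object is
\begin{equation*}
c(\fbf) = \inf \left\{ \frac{\|g^{\pi, \Qbf, \hbf} - g^{\pi^*, \Qbf^*, \fbf}\|_2^2}{\dperm((\pi, \Qbf, \hbf),(\pi^*, \Qbf^*, \fbf))^2} : \pi \in \Delta,\ \Qbf \in \Qcal,\ \hbf \in \Fcal^K,\ \dperm > 0 \right\},
\end{equation*}
and we would set $c^*(\pi^*, \Qbf^*, \fbf^*, \Fcal) := \liminf_{\fbf \to \fbf^*} c(\fbf)$, which is automatically lower semicontinuous in $(\pi^*, \Qbf^*, \fbf^*)$. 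To prove positivity on a neighborhood $\Vcal$ of $\fbf^*$, split each competitor by a $\dperm$-threshold $\delta > 0$. The near regime is handled by Part 2 applied at the pivot $\fbf$ rather than $\fbf^*$: by continuity of $c_0$ and $\epsilon$ in their arguments, one can choose $\Vcal$ small enough that $\delta \leq \epsilon(\pi^*, \Qbf^*, \fbf)$ and $c_0(\pi^*, \Qbf^*, \fbf) \geq \tfrac{1}{2} c_0(\pi^*, \Qbf^*, \fbf^*)$ for all $\fbf \in \Vcal$, giving a uniform positive bound in this regime.

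The main obstacle is the far regime, since $\Fcal$ is not assumed compact in $L^2$: one needs a positive uniform lower bound on $\|g^{\pi, \Qbf, \hbf} - g^{\pi^*, \Qbf^*, \fbf}\|_2$ when $\dperm \geq \delta$ and $\fbf \in \Vcal$. I would argue by contradiction from sequences $\fbf_n \to \fbf^*$ in $\Fcal^K$ and $(\pi_n, \Qbf_n, \hbf_n)$ with $\dperm_n \geq \delta$ and $\|g^{\pi_n, \Qbf_n, \hbf_n} - g^{\pi^*, \Qbf^*, \fbf_n}\|_2 \to 0$. Extract $(\pi_n, \Qbf_n) \to (\pi_\infty, \Qbf_\infty)$ by compactness of $\Delta \times \Qcal$, and extract a weak limit $\hbf_n \rightharpoonup \hbf_\infty$ in $L^2$ using the uniform $L^\infty$ and $L^2$ bounds from \textbf{[HF]}. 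Testing $g^{\pi_n, \Qbf_n, \hbf_n}$ against tensor products $\phi_1 \otimes \phi_2 \otimes \phi_3$ and passing to the limit factor-by-factor (using strong convergence of $\pi_n, \Qbf_n$ and that each inner product $\langle \phi_i, h_{n, k} \rangle$ converges to $\langle \phi_i, h_{\infty, k} \rangle$), one identifies the weak limit of $g^{\pi_n, \Qbf_n, \hbf_n}$ as $g^{\pi_\infty, \Qbf_\infty, \hbf_\infty}$; comparing with the strong $L^2$ convergence of $g^{\pi_n, \Qbf_n, \hbf_n}$ to $g^{\pi^*, \Qbf^*, \fbf^*}$ then yields $g^{\pi_\infty, \Qbf_\infty, \hbf_\infty} = g^{\pi^*, \Qbf^*, \fbf^*}$. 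Theorem~\ref{th_identifiabilite} thus forces $\dperm((\pi_\infty, \Qbf_\infty, \hbf_\infty), (\pi^*, \Qbf^*, \fbf^*)) = 0$, which together with weak lower semicontinuity of the $L^2$ norm on the density coordinate contradicts $\dperm_n \geq \delta$ and $\fbf_n \to \fbf^*$. This completes both the positivity of $c^*$ and, by taking $\liminf$, its lower semicontinuity.
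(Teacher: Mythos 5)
There is a genuine gap in the far regime of Part 1. After extracting $(\pi_n,\Qbf_n)\to(\pi_\infty,\Qbf_\infty)$ and a weak limit $\hbf_n \rightharpoonup \hbf_\infty$, you correctly identify $g^{\pi_\infty,\Qbf_\infty,\hbf_\infty}=g^{\pi^*,\Qbf^*,\fbf^*}$ and invoke Theorem~\ref{th_identifiabilite} to get $\dperm((\pi_\infty,\Qbf_\infty,\hbf_\infty),(\pi^*,\Qbf^*,\fbf^*))=0$. But this does not contradict $\dperm((\pi_n,\Qbf_n,\hbf_n),(\pi^*,\Qbf^*,\fbf_n))\geq\delta$. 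Weak lower semicontinuity of the norm gives $\|\hbf_\infty-\fbf^*\|_2\leq\liminf_n\|\hbf_n-\fbf_n\|_2$, which is an inequality in the wrong direction: it is entirely consistent to have $\hbf_n\rightharpoonup\fbf^*$ (up to permutation) while $\|\hbf_n-\fbf_n\|_2\geq\delta$ for every $n$\---think of highly oscillating perturbations of $\fbf^*$, which stay in an $\Lbf^2$- and $\Lbf^\infty$-bounded class such as $\Fcal$ and converge weakly but not strongly. To close the contradiction you would need \emph{strong} convergence of $\hbf_n$, which weak compactness does not supply, and the bound \textbf{[HF]} gives you no strong compactness in $\Lbf^2$. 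This is exactly the difficulty the paper's proof is organized around: it first splits $\hbf$ into its projection $\ubf$ onto the finite-dimensional space $\Span(\fbf)$ and the orthogonal remainder $\abf$, shows via the Schur product theorem that the cross term $M_\fbf(p,q,\ubf,\abf)$ is nonnegative and bounded below by $(\inf_k(\pi^*_k+p_k))^2\sigma_K(\Qbf^*+q)^4\sigma_K(G(\fbf+\ubf))^2\,\|\abf\|_2^2$\---an explicit, compactness-free control of precisely those oscillating directions\---and only then runs a compactness argument, now on the finite-dimensional set $(\Delta-\Delta)\times(\Qcal-\Qcal)\times((\Fcal-\Fcal)\cap\Span(\fbf))^K$, where bounded sets are strongly compact and the contradiction with $\|(p,q,\ubf)\|_\fbf\geq\epsilon_0$ does go through. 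Without some substitute for this orthogonal-component estimate, your far-field argument does not close.

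A secondary inaccuracy concerns Part 2. The first term of $c_0$ is not the conditioning of a block acting on $(\pi-\pi^*,\Qbf-\Qbf^*)$: in the paper it is the constant from the orthogonal-component bound just described, while the polynomial $H$ is the determinant of the \emph{joint} quadratic form in $(p,q,A)$, $A$ being the coordinates of the in-span density perturbation on the basis $\fbf^*$\---not of a separate ``density block'' at frozen $(\pi,\Qbf)$. As described, your second-order expansion lives on this finite-dimensional parametrization and again leaves the component of $\hbf-\fbf^*$ orthogonal to $\Span(\fbf^*)$ uncontrolled; the same orthogonal-projection step is needed there too.
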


\begin{proof}
Proof in Section \ref{sec_proof_minoration_quad}.
\end{proof}

\begin{corollary}
\label{cor_H0_LS}
Assume \textbf{[HX]}, \textbf{[HF]}, \textbf{[Hid]} and \textbf{[Hdet]} hold. Also assume that for all $f \in \Fcal$, $\int f d\mu = 1$.

Then there exists a constant $n_0$ depending on $\cquad$, $\cinf$ and $\Qbf^*$ and constants $M_0$ and $C'$ depending on $\Fcal$, $\Qbf^*$ and $\fbf^*$ such that for all $n \geq n_0$ and $t > 0$, with probability greater than $1 - e^{-t}$, one has for all $M \in \Mcal$ such that $M_0 \leq M \leq n$:
\begin{align*}
\inf_{\tau \in \Sfrak(K)} \max_{k \in \Xcal} \| \hat{f}^{(M)}_k - f^{*, (M)}_{\tau(k)} \|_2^2
	\leq C' \left( M \frac{\log(n)}{n} + \frac{t}{n} \right).
\end{align*}
\end{corollary}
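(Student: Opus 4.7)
The plan is to combine the $\Lbf^2$ variance bound on the three-observation density given by Theorem~\ref{th_oracle} with the quadratic lower bound of Theorem~\ref{th_minoration_quad} in order to transfer the control onto the parameters, and in particular onto the emission densities. The crucial feature of Theorem~\ref{th_minoration_quad} that makes the argument work uniformly in $M$ is that the positive constant $c^* = c^*(\pi^*, \Qbf^*, \fbf^*, \Fcal)$ depends only on the \emph{true} parameters, not on the auxiliary density $\fbf \in \Vcal$ that plays the role of reference in the inequality---this is exactly the improvement over previous versions that the author emphasized before stating Theorem~\ref{th_minoration_quad}.

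First, I would invoke Theorem~\ref{th_oracle}: for $n \geq n_0$ and $t > 0$, with probability at least $1-e^{-t}$, the inequality
\begin{equation*}
\|g^{\hat\pi^{(M)}, \hat\Qbf^{(M)}, \hat\fbf^{(M)}} - g^{\pi^*, \Qbf^*, \fbf^{*,(M)}}\|_2^2 \leq C\!\left(\frac{t}{n} + M\, \frac{\log(n)}{n}\right)
\end{equation*}
holds simultaneously for every $M \in \Mcal$ with $M \leq n$. I would then apply the first part of Theorem~\ref{th_minoration_quad} with $\fbf = \fbf^{*,(M)}$ and $(\pi, \Qbf, \hbf) = (\hat\pi^{(M)}, \hat\Qbf^{(M)}, \hat\fbf^{(M)})$, which yields, as soon as $\fbf^{*,(M)} \in \Vcal$,
\begin{equation*}
c^*\, \dperm\!\bigl((\hat\pi^{(M)}, \hat\Qbf^{(M)}, \hat\fbf^{(M)}),(\pi^*, \Qbf^*, \fbf^{*,(M)})\bigr)^2 \leq \|g^{\hat\pi^{(M)}, \hat\Qbf^{(M)}, \hat\fbf^{(M)}} - g^{\pi^*, \Qbf^*, \fbf^{*,(M)}}\|_2^2.
\end{equation*}
Unpacking $\dperm^2$, its third summand dominates $\inf_{\tau \in \Sfrak(K)} \max_{k \in \Xcal} \|\hat f^{(M)}_k - f^{*,(M)}_{\tau(k)}\|_2^2$, so combining the two displays with $C' := C/c^*$ delivers the claimed bound.

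The main obstacle is the verification of the condition $\fbf^{*,(M)} \in \Vcal$ and the identification of the threshold $M_0$. Here I would use that, by \textbf{[HF]}, $\Fcal$ is closed under the orthogonal projection onto $\Pfrak_M$, so $\fbf^{*,(M)} \in \Fcal^K$; and that, by density of $\bigcup_M \Pfrak_M$ in $\Lbf^2(\Ycal, \mu)$, $\fbf^{*,(M)}$ converges to $\fbf^*$ in $\Lbf^2$ as $M \to \infty$. Since the neighborhood $\Vcal$ furnished by Theorem~\ref{th_minoration_quad} depends only on $(\pi^*, \Qbf^*, \fbf^*, \Fcal)$, this produces an $M_0$ with the same dependency such that $\fbf^{*,(M)} \in \Vcal$ for all $M \geq M_0$. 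Positivity of $c^*$ under \textbf{[Hid]} and \textbf{[Hdet]} is then exactly the content of Theorem~\ref{th_minoration_quad}, so setting $C' = C/c^*$ completes the proof.
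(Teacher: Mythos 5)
Your proposal is correct and follows essentially the same route as the paper: invoke Theorem~\ref{th_oracle} for the $\Lbf^2$ bound on the three-observation density, apply the first part of Theorem~\ref{th_minoration_quad} with $\fbf = \fbf^{*,(M)}$ and the estimators as $(\pi,\Qbf,\hbf)$, and choose $M_0$ so that $\fbf^{*,(M)} \in \Vcal$ for $M \geq M_0$. Your justification that $\fbf^{*,(M)} \in \Fcal^K$ converges to $\fbf^*$ (hence eventually lies in $\Vcal$) is exactly the detail the paper leaves implicit.
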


\begin{remark}
Using the second point of Theorem \ref{th_minoration_quad}, one can alternatively take $n_0$ and $M_0$ depending on $\Fcal$, $\Qbf^*$ and $\fbf^*$, and $C'$ depending on $\cquad$, $\cinf$, $\Qbf^*$ and $\fbf^*$ only. For instance, one can take $C' =  C / c_0(\pi^*, \Qbf^*, \fbf^*)$ with the notations of Theorems \ref{th_oracle} and \ref{th_minoration_quad}.

In particular, this means that the asymptotic variance bound of the least squares estimators (and therefore the rate of convergence of the estimators selected by our state-by-state selection method) does not depend on the set $\Fcal$, but only on the HMM parameters and on the bounds $\cquad$ and $\cinf$ on the square and supremum norms of the emission densities. Note that this universality result is essentially an asymptotic one since it requires $n_0$ to depend on $\Fcal$ in a non-explicit way.
\end{remark}

\begin{proof}
Let $\Vcal$ be the neighborhood given by Theorem \ref{th_minoration_quad}, then there exists $M_0$ such that for all $M \geq M_0$, $\fbf^{*, (M)} \in \Vcal$. Then Theorem \ref{th_oracle} and Theorem \ref{th_minoration_quad} applied to $\pi = \hat{\pi}^{(M)}$, $\Qbf = \hat{\Qbf}^{(M)}$, $\hbf = \hat{\fbf}^{(M)}$ and $\fbf = \fbf^{*, (M)}$ for all $M$ allow to conclude.
\end{proof}

We may now state the following result which shows that the state-by-state selection method applied to these estimators reaches the minimax rate of convergence (up to a logarithmic factor) in an adaptive manner under generic assumptions. Its proof is the same as the one of Corollary \ref{cor_spectral_lepski_rate}.

\begin{corollary}
\label{cor_LS_lepski_rate}
Assume \textbf{[HX]}, \textbf{[HF]}, \textbf{[Hid]} and \textbf{[Hdet]} hold. Also assume that for all $f \in \Fcal$, $\int f d\mu = 1$ and that for all $k$, there exists $s_k$ such that $\|f^{*,(M)}_k - f^*_k\|_2 = O(M^{-s_k})$. Then there exists a constant $C_\sigma$ depending on $\cquad$, $\cinf$, $\Qbf^*$ and $\fbf^*$ such that the following holds.

Let $C \geq C_\sigma$ and let $\hat{\fbf}^\text{sbs}$ be the estimators selected from the family $(\hat{\fbf}^{(M)})_{M \leq n}$ with $\sigma(M) = C \sqrt{\frac{M \log(n)}{n}}$ for all $M$, aligned like in Remark \ref{remark_reordering_for_asymptotics}. Then there exists a random permutation $\tau$ which does not depend on $n$ such that
\begin{equation*}
\forall k \in \Xcal, \qquad
	\Ebb\left[
		 \left\| \hat{f}^\text{sbs}_{\tau(k)} - f^*_k \right\|_2
	\right]
		= O\left( \left( \frac{n}{\log(n)} \right)^{\frac{-s_k}{2s_k+1}} \right).
\end{equation*}
\end{corollary}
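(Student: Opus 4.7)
The plan is to mimic the proof of Corollary \ref{cor_spectral_lepski_rate}, substituting Corollary \ref{cor_H0_LS} for Theorem \ref{th_garantieSpectral} at the variance-bound step. First I would fix some $\alpha > 0$ and apply Corollary \ref{cor_H0_LS} with the deviation parameter $t = (1+2\alpha)\log n$. This yields, with probability at least $1 - n^{-(1+2\alpha)}$, the bound
\begin{equation*}
\inf_{\tau \in \Sfrak(K)} \max_{k} \left\|\hat{f}^{(M)}_k - f^{*,(M)}_{\tau(k)}\right\|_2^2 \leq C'\,\frac{(M + 1+2\alpha)\log n}{n}
\end{equation*}
simultaneously for every $M \in \Mcal$ with $M_0 \leq M \leq n$. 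For $M \geq 1$ this is dominated by $C'(2+2\alpha)M \log(n)/n$, so taking $\sigma(M) := C \sqrt{M \log(n)/n}$ with any $C \geq C_\sigma := 2\sqrt{C'(2+2\alpha)}$ gives assumption \textbf{[H($n^{-(1+2\alpha)}$)]} for the family $(\hat{\fbf}^{(M)})_{M_0 \leq M \leq n}$. Using the variant of Corollary \ref{cor_H0_LS} stated in the remark that follows it, the constant $C_\sigma$ depends only on $\cquad$, $\cinf$, $\Qbf^*$ and $\fbf^*$, as required.

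Next I would invoke Lemma \ref{lemma_permutation_unique} to align the estimators and produce the permutation $\tau_n$. Since $\|\fbf^{*,(M_0)} - \fbf^*\|_2 \to 0$ as $M_0 \to \infty$ and the limit densities $(f^*_k)_k$ are distinct by \textbf{[Hid]}, fixing $M_0$ large enough ensures $m(\fbf^*, M_0)$ is bounded below by a positive constant; the smallness condition $\sigma(M) + \sigma(M_0) + 2 B_{M,M_0} < m(\fbf^*, M_0)$ then holds for every $M$ in the relevant range (those for which the oracle's bias--variance tradeoff is actually attained) as soon as $n$ is large enough. Theorem \ref{th_oracleLepski} then delivers, on the same high-probability event, for every $k \in \Xcal$,
\begin{equation*}
\left\|\hat{f}^{\text{sbs}}_{\tau_n(k)} - f^*_k\right\|_2 \leq 4 \inf_{M} \left\{ \left\|f^{*,(M)}_k - f^*_k\right\|_2 + \sigma(M) \right\} = 4 \inf_{M} \left\{ O(M^{-s_k}) + C\sqrt{M \log(n)/n} \right\},
\end{equation*}
and optimising in $M$ (the tradeoff is reached at $M \asymp (n/\log n)^{1/(2 s_k + 1)}$) yields the rate $(n/\log n)^{-s_k/(2 s_k + 1)}$.

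Finally, to pass from high-probability control to control in expectation, I use \textbf{[HF]}: the least squares estimators are constrained to $\Fcal^K$, so $\|\hat{f}^{\text{sbs}}_{\tau_n(k)} - f^*_k\|_2 \leq 2 \cquad$ deterministically. The complementary event has probability at most $n^{-(1+2\alpha)}$, contributing only $O(n^{-(1+2\alpha)})$ to the expectation, which is negligible compared to the polynomial rate (choose $\alpha$ so that $1+2\alpha$ exceeds the needed exponent). Passing from the $n$-dependent permutation $\tau_n$ to a single random permutation $\tau$ is done via the alignment procedure at the end of Section \ref{sec_Lepski_framework}, exactly as in Remark \ref{remark_reordering_for_asymptotics}. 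The main technical obstacle in this argument sits entirely upstream, inside Corollary \ref{cor_H0_LS} and in particular Theorem \ref{th_minoration_quad} (the uniform lower bound with a constant independent of $\fbf$); once that is granted, the state-by-state selection step is a direct transcription of the spectral case.
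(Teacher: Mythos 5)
Your proposal is correct and follows exactly the route the paper intends: the paper's own proof of Corollary \ref{cor_LS_lepski_rate} is literally the single sentence that it is the same as that of Corollary \ref{cor_spectral_lepski_rate}, with Corollary \ref{cor_H0_LS} playing the role of Theorem \ref{th_garantieSpectral}, and you have filled in precisely those steps. Your observation that \textbf{[HF]} makes the $n^\alpha$-truncation of the spectral case unnecessary here (since $\| \hat{f}^\text{sbs}_{\tau_n(k)} - f^*_k \|_2 \leq 2 \cquad$ deterministically) is the one detail the paper leaves implicit, and you handle it correctly.
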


\section{Numerical experiments}
\label{sec_simulations}

This section is dedicated to the discussion of the practical implementation of our method. We run the spectral estimators on simulated data for different number of observations and study the rate of convergence of the selected estimators for several variants of our method. Finally, we discuss the algorithmic complexity of the different estimators and selection methods.

In Section \ref{sec_sim_setting}, we introduce the parameters with which we generate the observations. In Section \ref{sec_penalty_calibration}, we discuss how to calibrate the constant of the penalty in practice. In Section \ref{sec_alternatives}, we introduce two other ways to select the final estimators, the POS and MAX variants. Section \ref{sec_results} contains the results of the simulations for each variant and calibration method.
In Section \ref{sec_VC}, we present a cross validation procedure and compare its results with the one obtained using our method.
Finally, we discuss the algorithmic complexity of the different algorithms and estimators in Section \ref{sec_complexity}.

\subsection{Setting and parameters}
\label{sec_sim_setting}

We take $\Ycal = [0,1]$ equipped with the Lebesgue measure. We choose the approximation spaces spanned by a trigonometric basis: ${\Pfrak_M := \Span(\varphi_1, \dots, \varphi_M)}$ with
\begin{equation*}
\begin{cases}
\varphi_1(x) &= 1 \\
\varphi_{2m}(x) &= \sqrt{2} \cos(2\pi m x) \\
\varphi_{2m+1}(x) &= \sqrt{2} \sin(2\pi m x)
\end{cases}
\end{equation*}
for all $x \in [0,1]$ and $m \in \Nbb^*$. We will consider a hidden Markov model with $K=3$ hidden states and the following parameters:

\begin{itemize}[noitemsep]
\item Transition matrix
\begin{align*}
\Qbf^* &=
\left(\begin{array}{c c c}
0.7  & 0.1  & 0.2 \\
0.08 & 0.8  & 0.12\\
0.15 & 0.15 & 0.7
\end{array}\right) ;
\end{align*}

\item Emission densities (see Figure \ref{fig_emission_densities})
\begin{itemize}
\item Uniform distribution on $[0;1]$;
\item Symmetrized Beta distribution, that is a mixture with the same weight of $\frac{2}{3} X$ and $1 - \frac{1}{3} X'$ with $X,X'$ i.i.d. following a Beta distribution with parameters $(3,1.6)$;
\item Beta distribution with parameters $(3,7)$.
\end{itemize}
\end{itemize}

\begin{figure}[!h]
\centering
\includegraphics[scale=0.7]{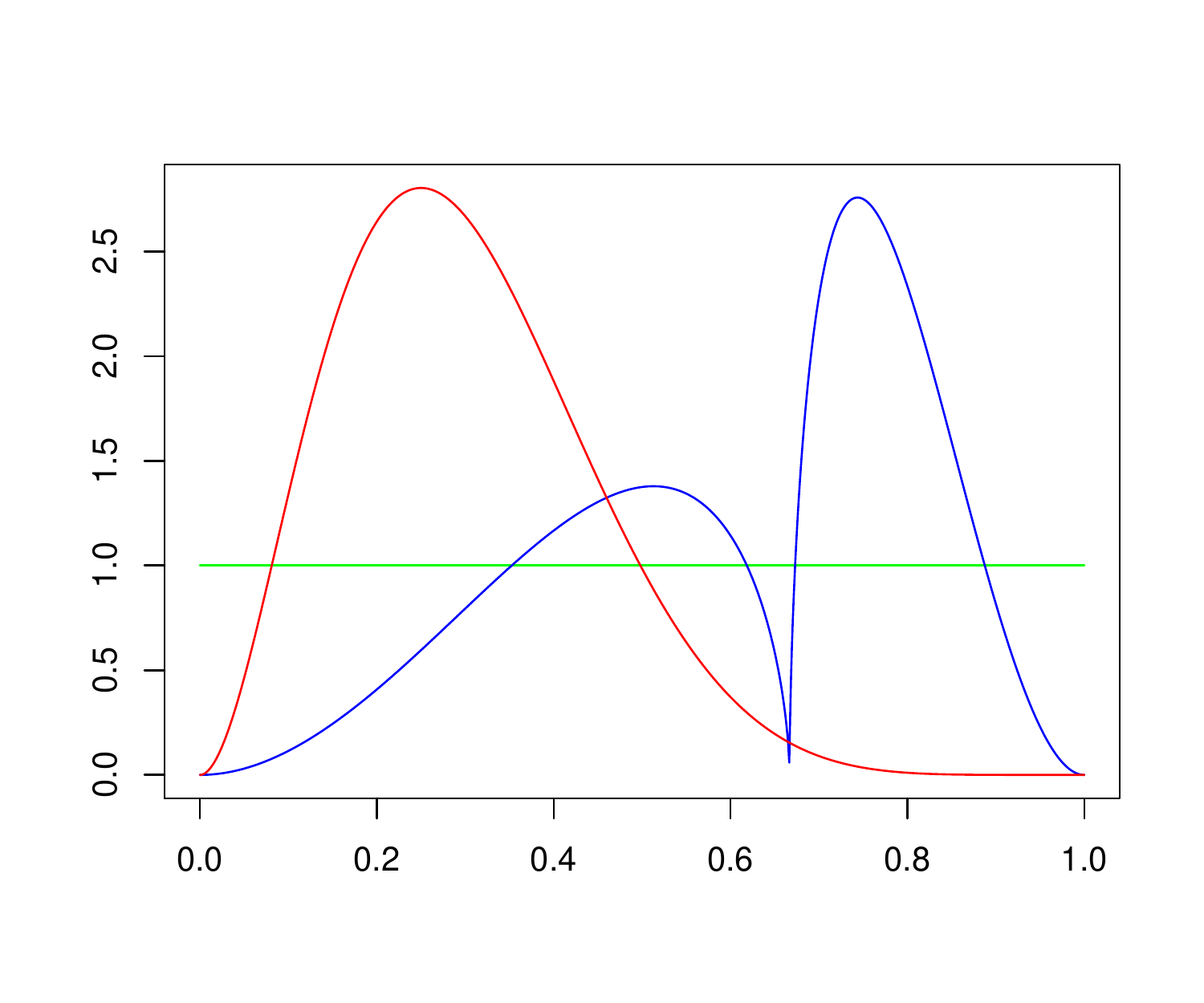}
\vspace{-1em}
\caption{Emission densities. In all following figures, the uniform distribution corresponds to the green lines, the Beta distribution to the red lines and the symmetrized Beta distribution to the blue lines.}
\label{fig_emission_densities}
\end{figure}

We generate $n$ observations and run the spectral algorithm in order to obtain estimators for the models $\Pfrak_M$ with $\verb?M_min? \leq M \leq \verb?M_max?$, $m = 20$ and $r = \lceil 2\log(n) + 2\log(M) \rceil$, where $\verb?M_min? = 3$ and $\verb?M_max? = 300$. Finally, we use the state-by-state selection method to choose the final estimator for each emission density. The main reason for using spectral estimators instead of maximum likelihood estimation or least squares estimation is its computational speed: it is much faster for large $n$ than the least squares algorithm or the EM algorithm, which makes studying asymptotic behaviours possible.

We made 300 simulations, 20 per value of $n$, with $n$ taking values in $\{$%
$5 \times 10^4, 7 \times 10^4, 1 \times 10^5, 1.5 \times 10^5, 2.2 \times 10^5, 3.5 \times 10^5, 5 \times 10^5, 7 \times 10^5, 1 \times 10^6, 1.5 \times 10^6, 2.2 \times 10^6, 3.5 \times 10^6, 5 \times 10^6, 7 \times 10^6, 1 \times 10^7$%
$\}$.

\subsection{Penalty calibration}
\label{sec_penalty_calibration}

It is important to note that when considering spectral and least squares methods, the penalty $\sigma$ in the state-by-state selection procedure depends on the hidden parameters of the HMM and as such is unknown in practice. This penalty calibration problem is well known and several procedures exist that allow to solve it, for instance the slope heuristics and the dimension jump method (see for instance \cite{BMM12a} and references therein). In the following, we will use the dimension jump method to calibrate the penalty in the state-by-state selection procedure.

Consider a penalty shape $\pen_\text{shape}$ and define $\hat{M}_k(\rho)$ the model selected for the hidden state $k$ by the state-by-state selection estimator using the penalty $\rho \, \pen_\text{shape}$:
\begin{equation*}
\hat{M}_k(\rho) \in \underset{M \in \Mcal}{\argmin} \{ A_k(M) + 2 \rho \, \pen_\text{shape}(M) \}.
\end{equation*}
where
\begin{equation*}
A_k(M) = \underset{M' \in \Mcal}{\sup} \left\{ \left\| \hat{f}^{(M')}_k - \hat{f}^{(M \wedge M')}_k \right\|_2 - \rho \, \pen_\text{shape}(M') \right\}.
\end{equation*}

The dimension jump method relies on the heuristics that there exists a constant $C$ such that $C \, \pen_\text{shape}$ is a \emph{minimal penalty}. This means that for all $\rho < C$, the selected models $\hat{M}_k(\rho)$ will be very large, while for $\rho > C$, the models will remain small. This translates into a sharp jump located around a value $\rho_{\text{jump},k} = C$ in the plot of $\rho \longmapsto \hat{M}_k(\rho)$. The final step consists in taking twice this value to calibrate the constant of the penalty, thus selecting the model $\hat{M}(2 \rho_{\text{jump},k})$. In practice, we take $\rho_{\text{jump},k}$ as the position of the largest jump of the function $\rho \longmapsto \hat{M}_k(\rho)$.

Figure \ref{fig_dim_jump} shows the resulting dimension jumps for $n = 220,000$ observations. Each curve corresponds to one of the $\hat{M}_k(\rho)$ and has a clear dimension jump, which confirms the relevance of the heuristics. Several methods may be used to calibrate the constant of the penalty:
\begin{description}
\item[\textbf{eachjump}.] Calibrate the constant independently for each state. This method has the advantage of being easy to calibrate since there is usually a single sharp jump in each state's complexity. However, our theoretical results do not suggest that the penalty constant is different for each state;

\item[\textbf{jumpmax}.] Calibrate the constant for all states together using only the latest jump. This consists in taking the maximum of the $\rho_{\text{jump},k}$ to select the final models. Since the penalty is known up to a multiplicative constant and taking a constant larger than needed does not affect the rates of convergence\---contrary to smaller constants\---this is the ``safe'' option;

\item[\textbf{jumpmean}.] Calibrate the constant for all states together using the mean of the positions of the different jumps.
\end{description}
We try and compare these calibration methods in Section \ref{sec_results}.

\begin{figure}[!t]
\centering
\vspace{-2em}
\includegraphics[scale=0.84]{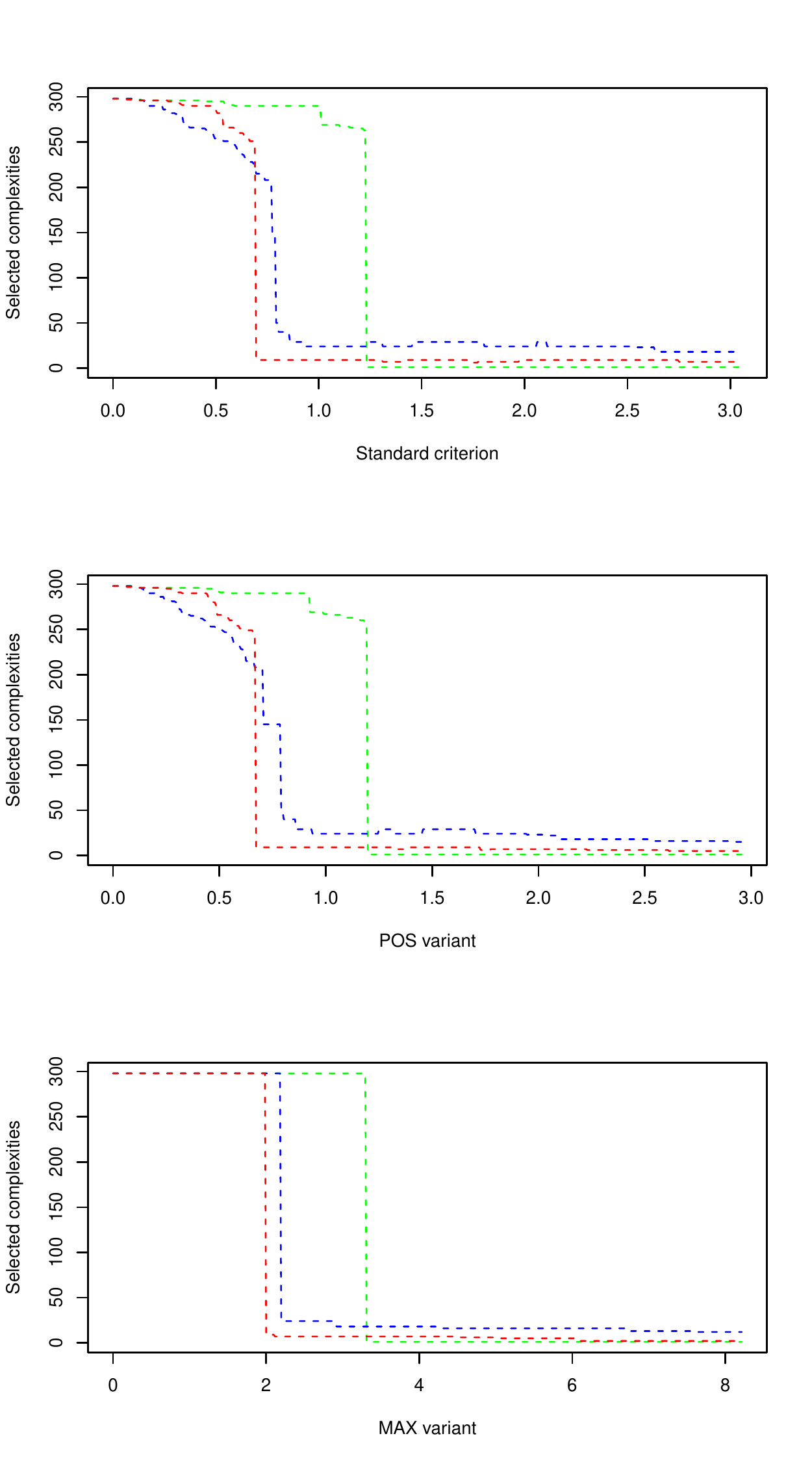} 

\vspace{-17.5cm}
\hspace{6cm}\includegraphics[scale=0.15]{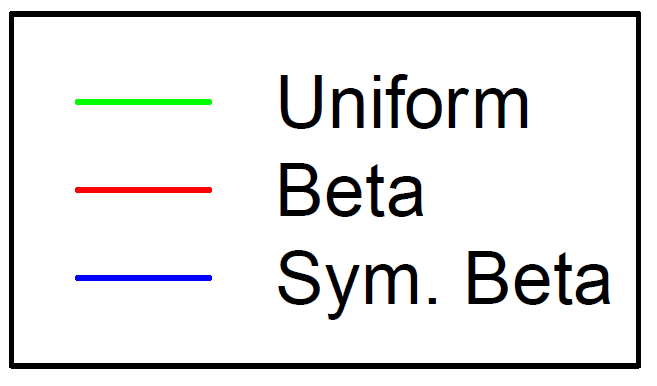}
\vspace{16cm}
\caption{Selected complexities with respect to the penalty constant $\rho$ for the same simulation of $n = 500, 000$ observations.  The colored dashed lines correspond to the single-state complexities $M_k(\rho)$.}
\label{fig_dim_jump}
\end{figure}

\subsection{Alternative selection procedures}
\label{sec_alternatives}

\subsubsection{Variant POS.}

As mentionned in Section \ref{sec_lepski_th}, it is also possible to select the estimators using the criterion
\begin{equation*}
A_k(M) = \underset{M' \geq M}{\sup} \left\{ \left\| \hat{f}^{(M')}_k - \hat{f}^{(M)}_k \right\|_2 - \sigma(M') \right\}_+
\end{equation*}
followed by
\begin{equation*}
\hat{M}_k \in \underset{M \in \Mcal}{\argmin} \{ A_k(M) + 2 \sigma(M) \}.
\end{equation*}
This positivity condition was in the original Goldenshluger-Lepski method. The theoretical guarantees remain the same as the previous method and both behave almost identically in practice, as shown in Section \ref{sec_results}.

\subsubsection{Variant MAX.}

In the context of kernel density estimation, \cite{lacour2016estimator} show that the Goldenshluger-Lepski method still works when the biais estimate $A_k(M)$ of the model $M$ is replaced by the distance between the estimator of the model $M$ and the estimator with the smallest bandwidth (the analog of the largest model in our setting). They also prove an oracle inequality for this method after adding a corrective term to the penalty.

The following variant is based on the same idea. It consists in selecting the model
\begin{equation*}
\hat{M}_k \in \underset{M \in \Mcal}{\argmin} \{ \| \hat{f}^{(M_{\max})}_k - \hat{f}^{(M)}_k \|_2 + \sigma(M) \}
\end{equation*}
for each $k \in \Xcal$ and takes
\begin{equation*}
\hat{f}_k = \hat{f}^{(\hat{M}_k)}_k,
\end{equation*}
where $\sigma$ is the same penalty as the one in the usual state-by-state selection method.

An advantage of this algorithm is its lower complexity, since it requires $O(M_{\max})$ computations of $\Lbf^2$ norms instead of $O(M_{\max}^2)$. We do not study this method theoretically in our setting. However, the simulations (and in particular Figure \ref{fig_results_summaries}) show that it behaves similarly to the standard state-by-state selection method in the asymptotic regime and even has a smaller error for small number of observations. In addition, the dimension jumps are much sharper for this method than for the usual state-by-state selection method (see Figure \ref{fig_dim_jump}), which makes the calibration heuristics easier to use.

\subsection{Results}
\label{sec_results}

Figure \ref{fig_results} shows the evolution of the error $\|\hat{f}_k - f^*_k \|_2$ for each state $k$ with respect to the number of observations $n$, for all penalty calibration methods and all variants of the model selection procedure. Figure \ref{fig_results_summaries} compares the evolution of the median error for  the different calibration methods and for the different selection variants, and Figure \ref{fig_decr_example} compares two estimators with the oracle estimators.

\begin{figure}[!h]
\vspace{-1em}
\centering
\begin{subfigure}{0.32\textwidth}
\centering
\includegraphics[scale=0.53]{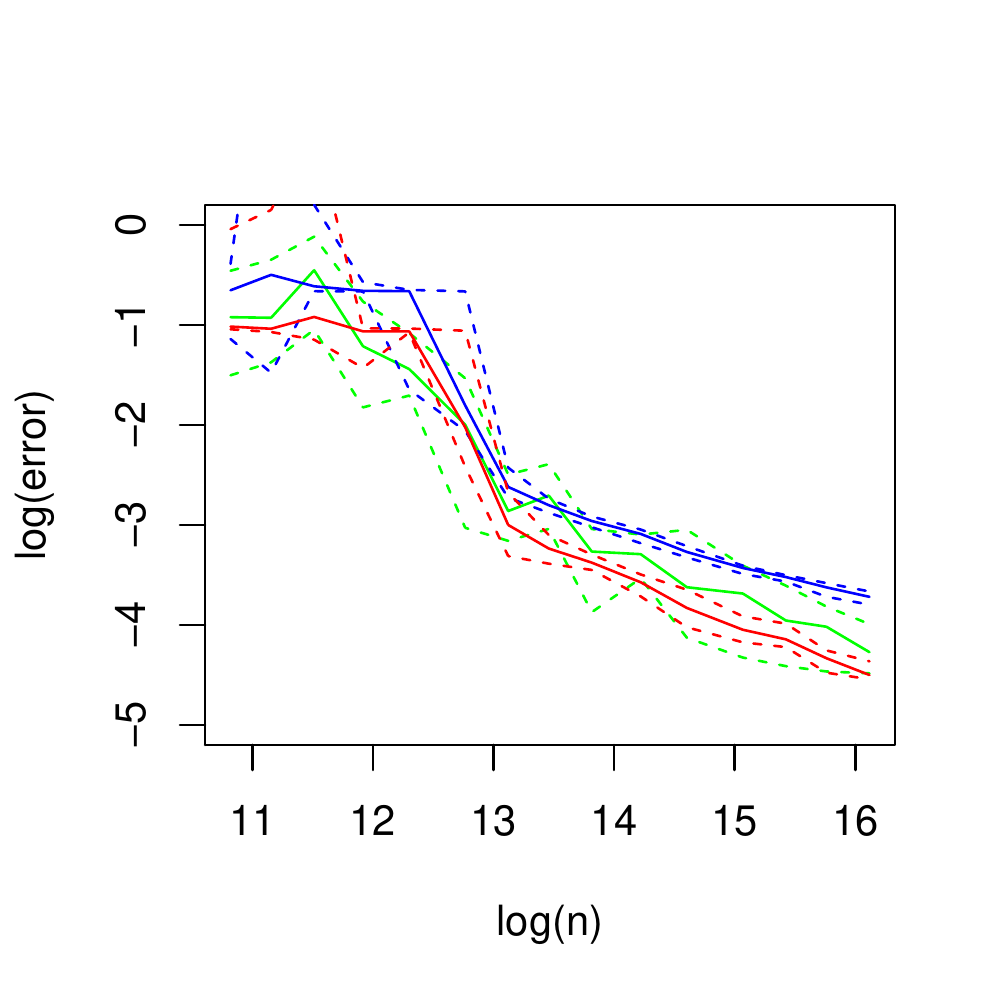} 
\vspace{-1em}
\caption{eachjump}
\end{subfigure}
\begin{subfigure}{0.32\textwidth}
\centering
\includegraphics[scale=0.53]{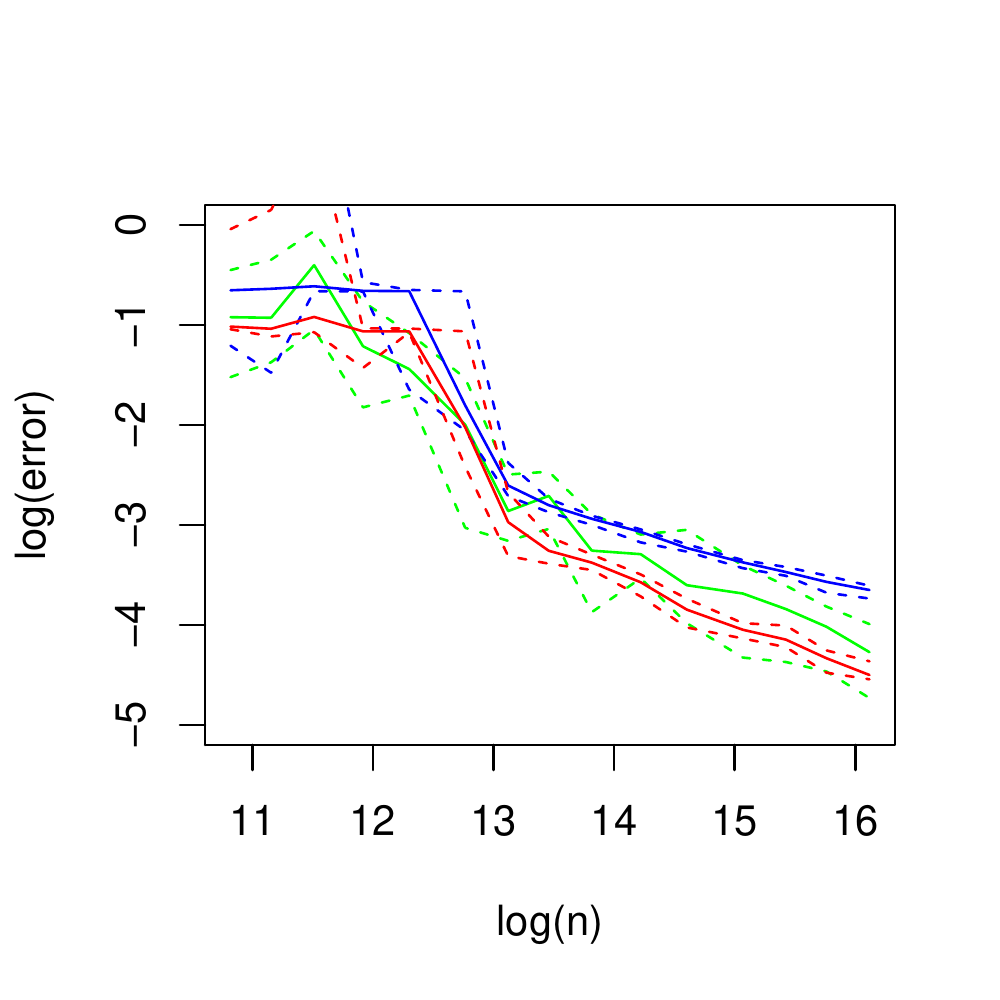}
\vspace{-1em}
\caption{eachjump POS}
\end{subfigure}
\begin{subfigure}{0.32\textwidth}
\centering
\includegraphics[scale=0.53]{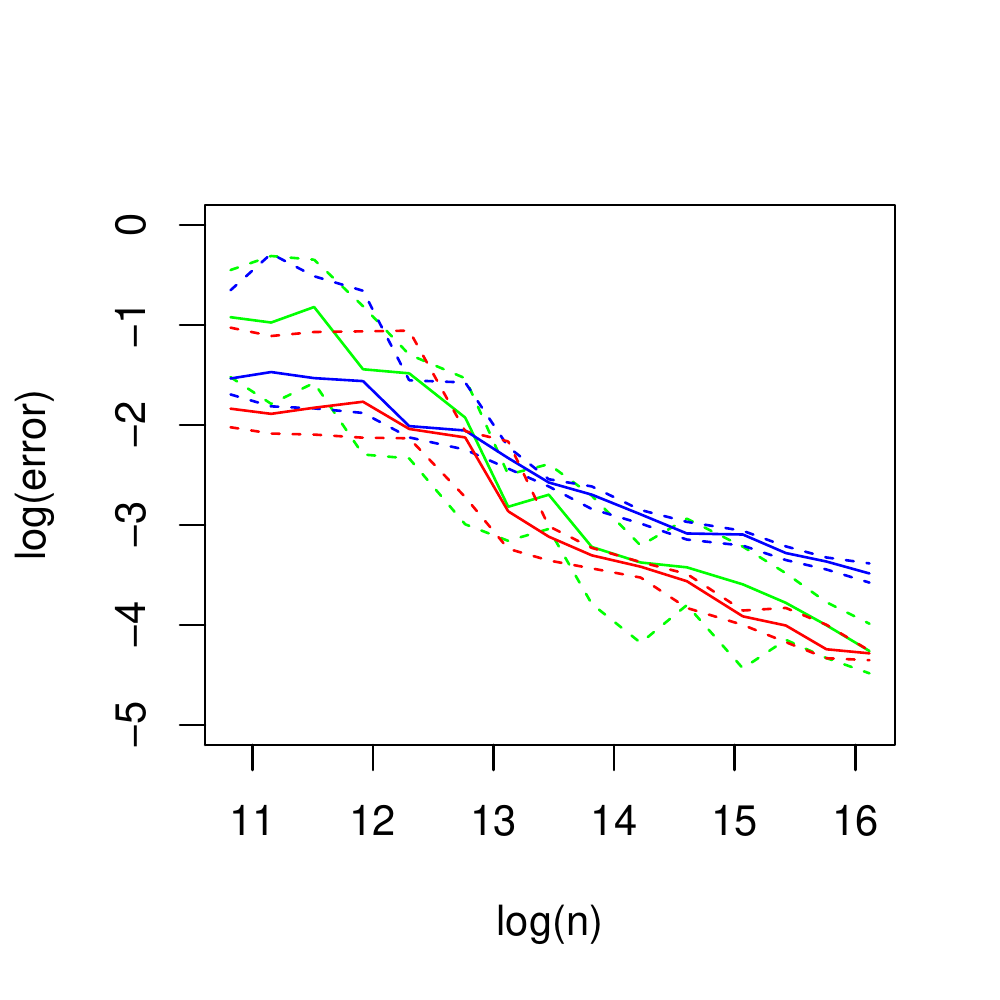}
\vspace{-1em}
\caption{eachjump MAX}
\end{subfigure}

\begin{subfigure}{0.32\textwidth}
\centering
\includegraphics[scale=0.53]{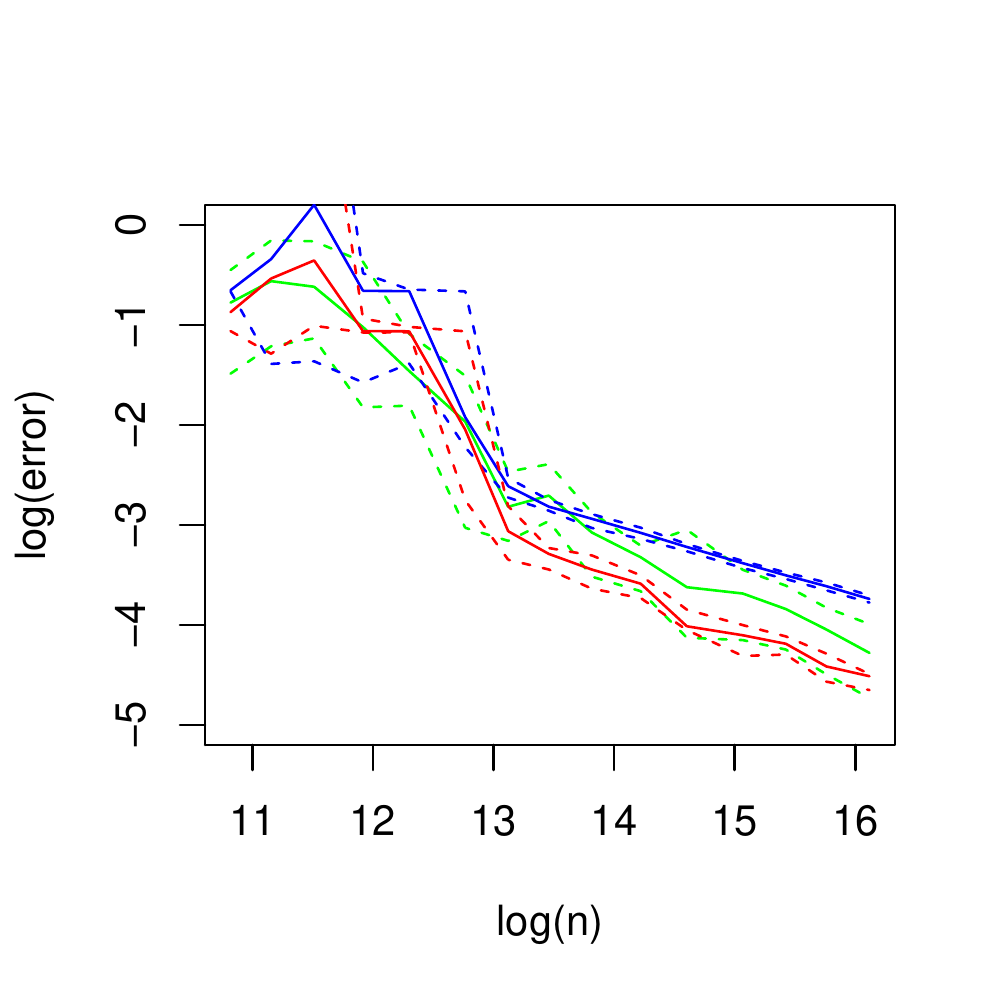}
\vspace{-1em}
\caption{jumpmean}
\end{subfigure}
\begin{subfigure}{0.32\textwidth}
\centering
\includegraphics[scale=0.53]{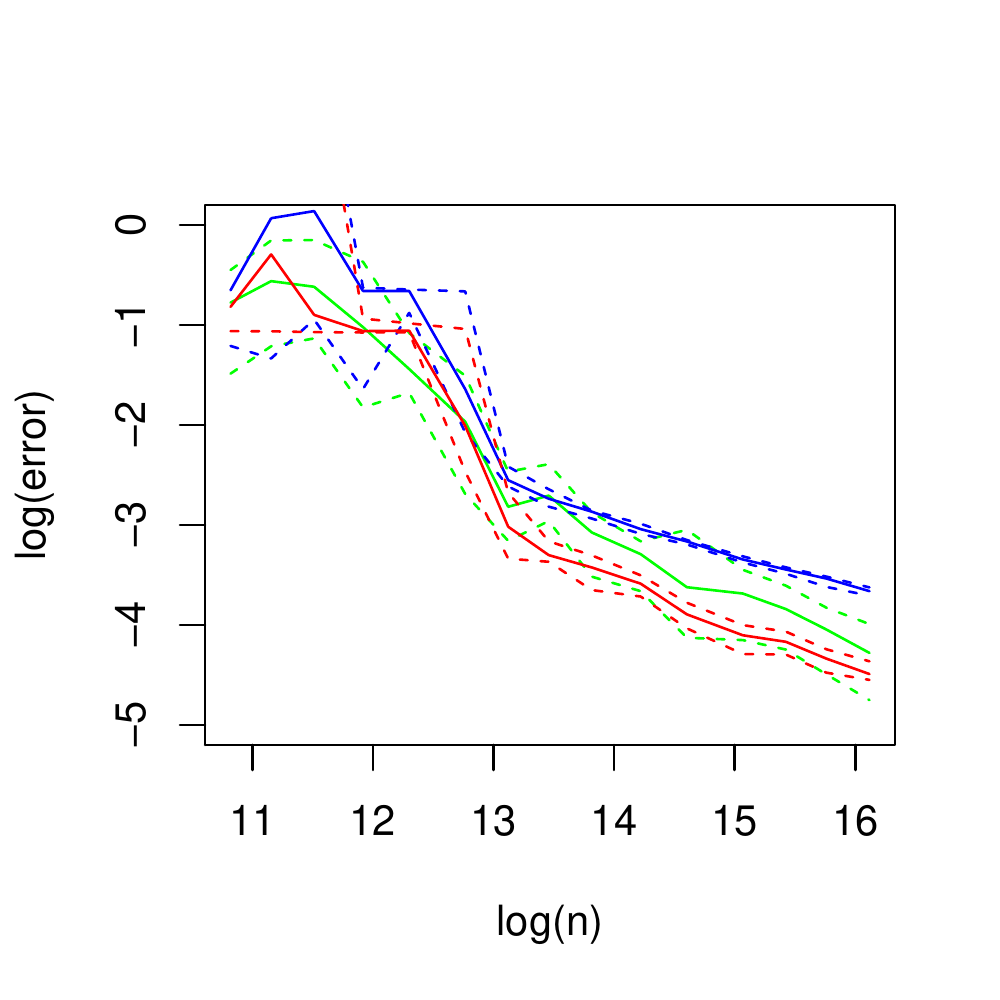}
\vspace{-1em}
\caption{jumpmean POS}
\end{subfigure}
\begin{subfigure}{0.32\textwidth}
\centering
\includegraphics[scale=0.53]{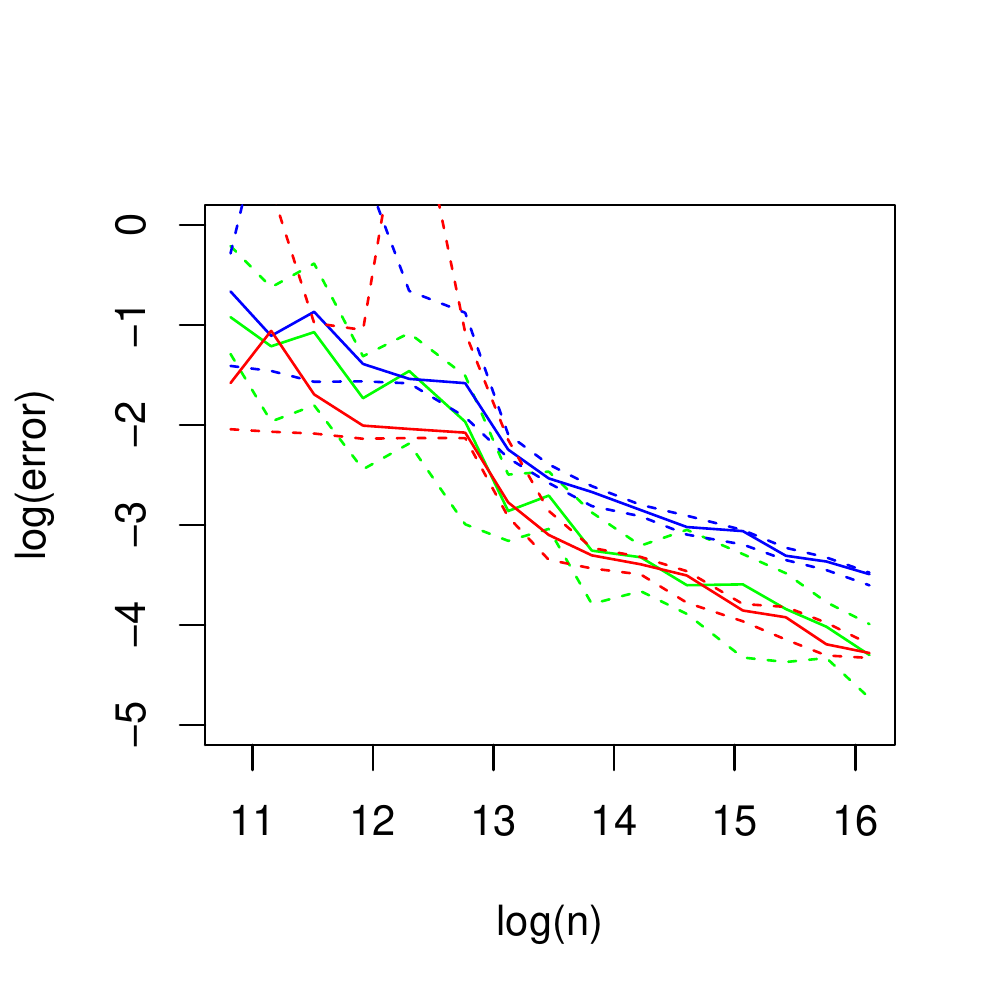}
\vspace{-1em}
\caption{jumpmean MAX}
\end{subfigure}

\begin{subfigure}{0.32\textwidth}
\centering
\includegraphics[scale=0.53]{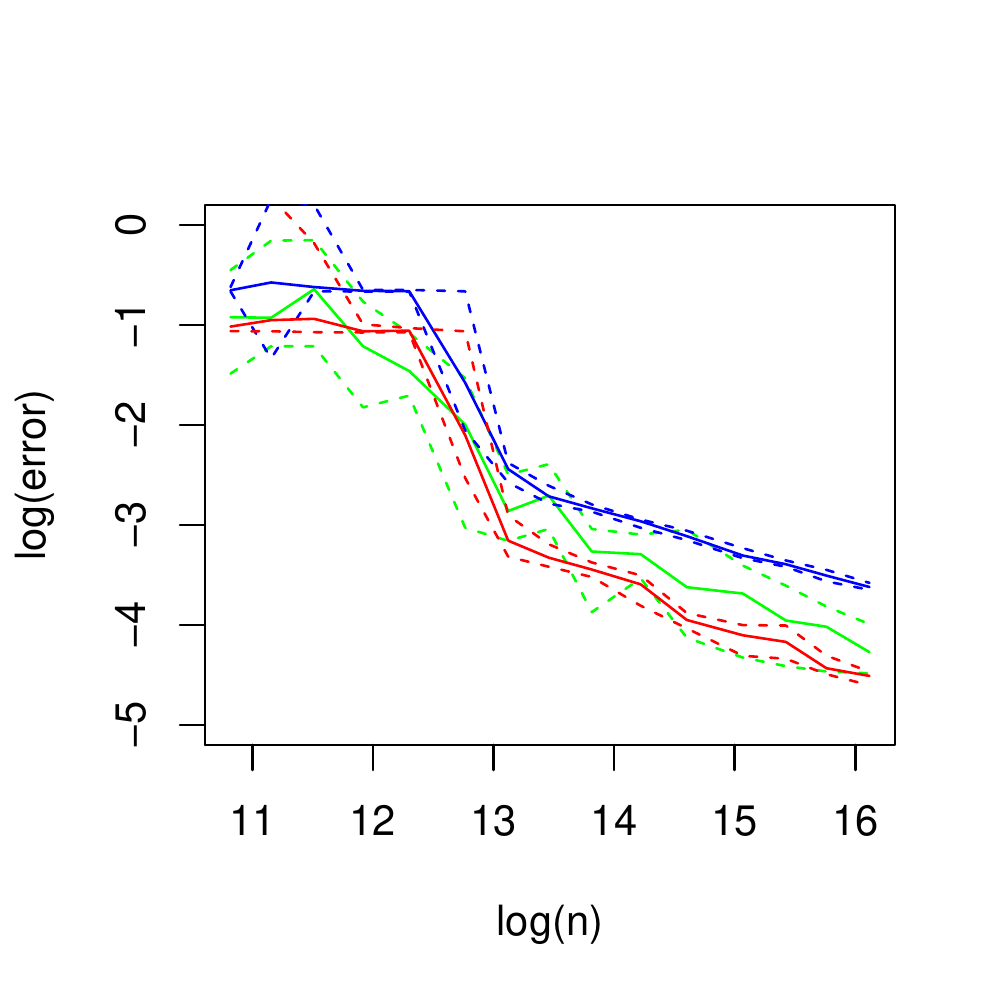}
\vspace{-1em}
\caption{jumpmax}
\end{subfigure}
\begin{subfigure}{0.32\textwidth}
\centering
\includegraphics[scale=0.53]{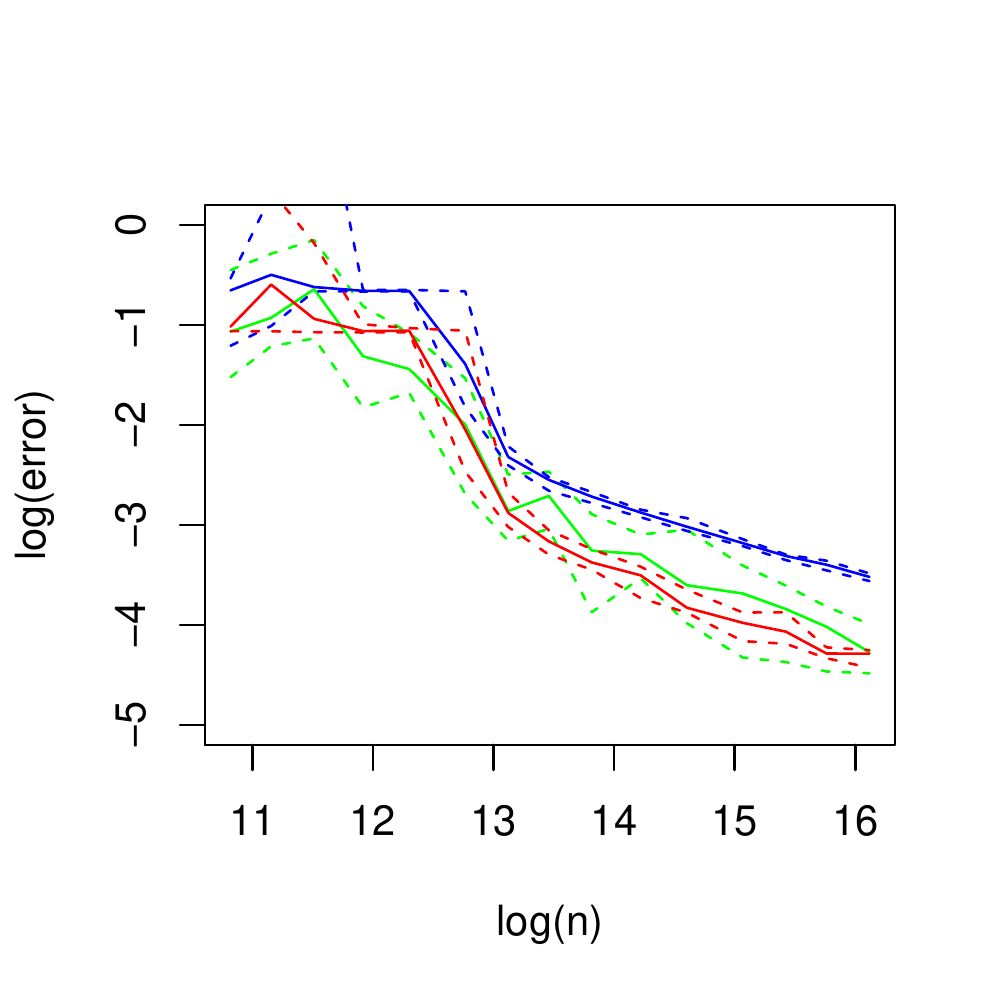}
\vspace{-1em}
\caption{jumpmax POS}
\end{subfigure}
\begin{subfigure}{0.32\textwidth}
\centering
\includegraphics[scale=0.53]{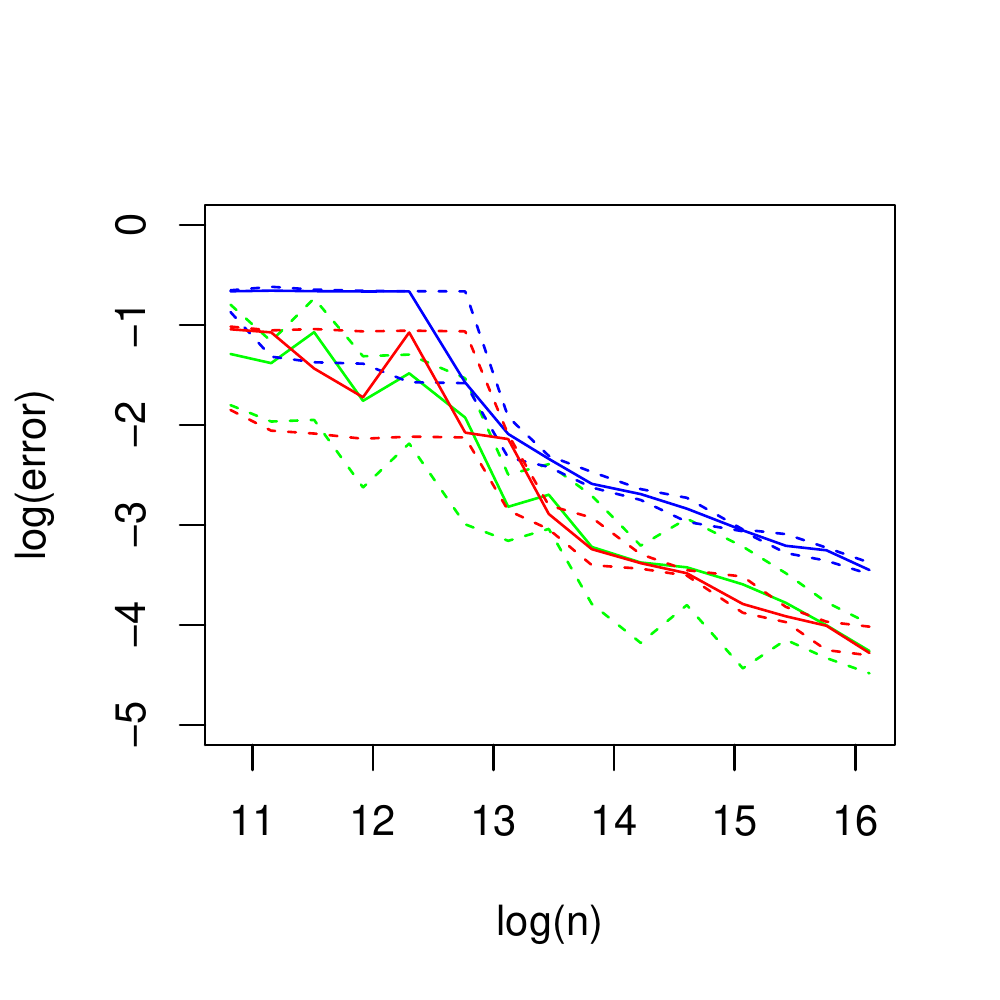}
\vspace{-1em}
\caption{jumpmax MAX}
\end{subfigure}

\caption{Logarithm of the $\Lbf^2$ error on each emission densities depending on the logarithm of the number of observations for each of the selection and calibration methods. Each color corresponds to one emission density. The full lines are the medians of the 20 observations and the dashed ones are the 25 and 75 percentiles.} 
\label{fig_results}
\end{figure}

\begin{figure}[!h]
\centering
\begin{subfigure}{0.32\textwidth}
\centering
\includegraphics[scale=0.55]{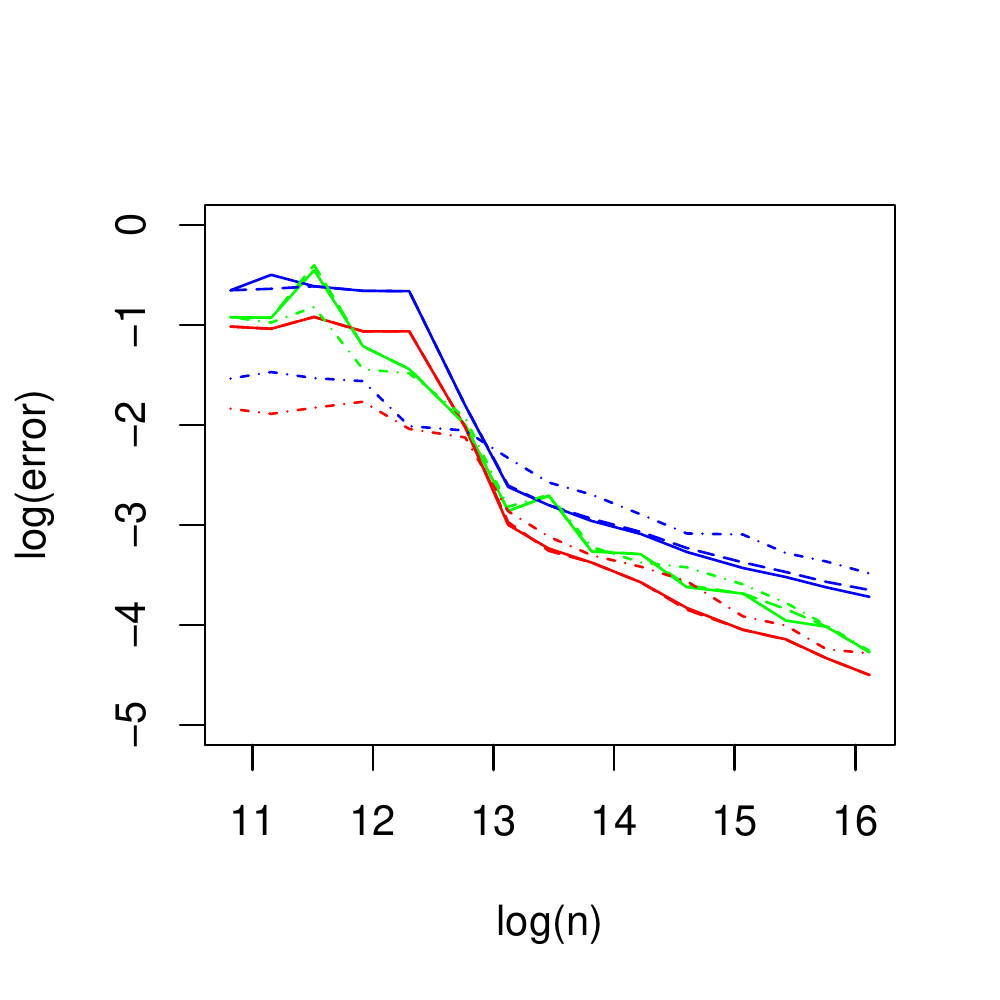}
\vspace{-1em}
\caption{eachjump summary}
\end{subfigure}
\begin{subfigure}{0.32\textwidth}
\centering
\includegraphics[scale=0.55]{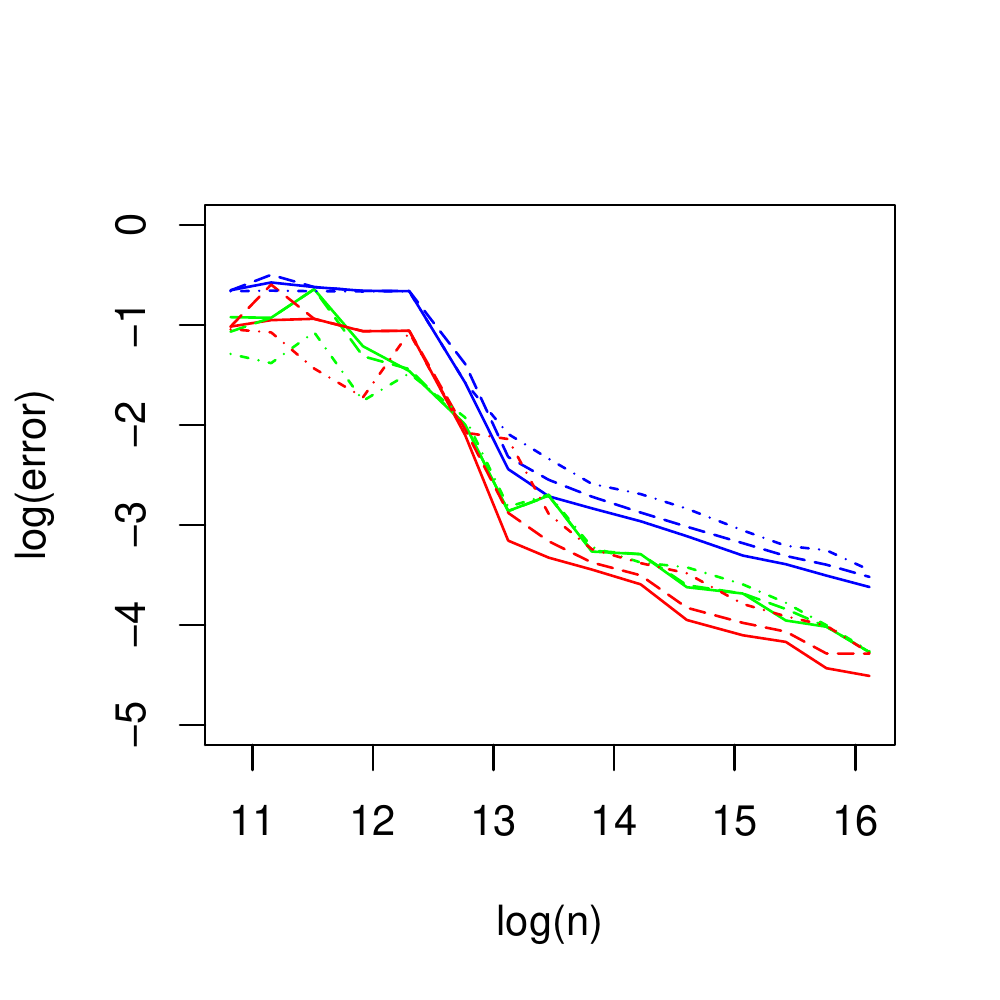}
\vspace{-1em}
\caption{jumpmax summary}
\end{subfigure}
\begin{subfigure}{0.32\textwidth}
\centering
\includegraphics[scale=0.55]{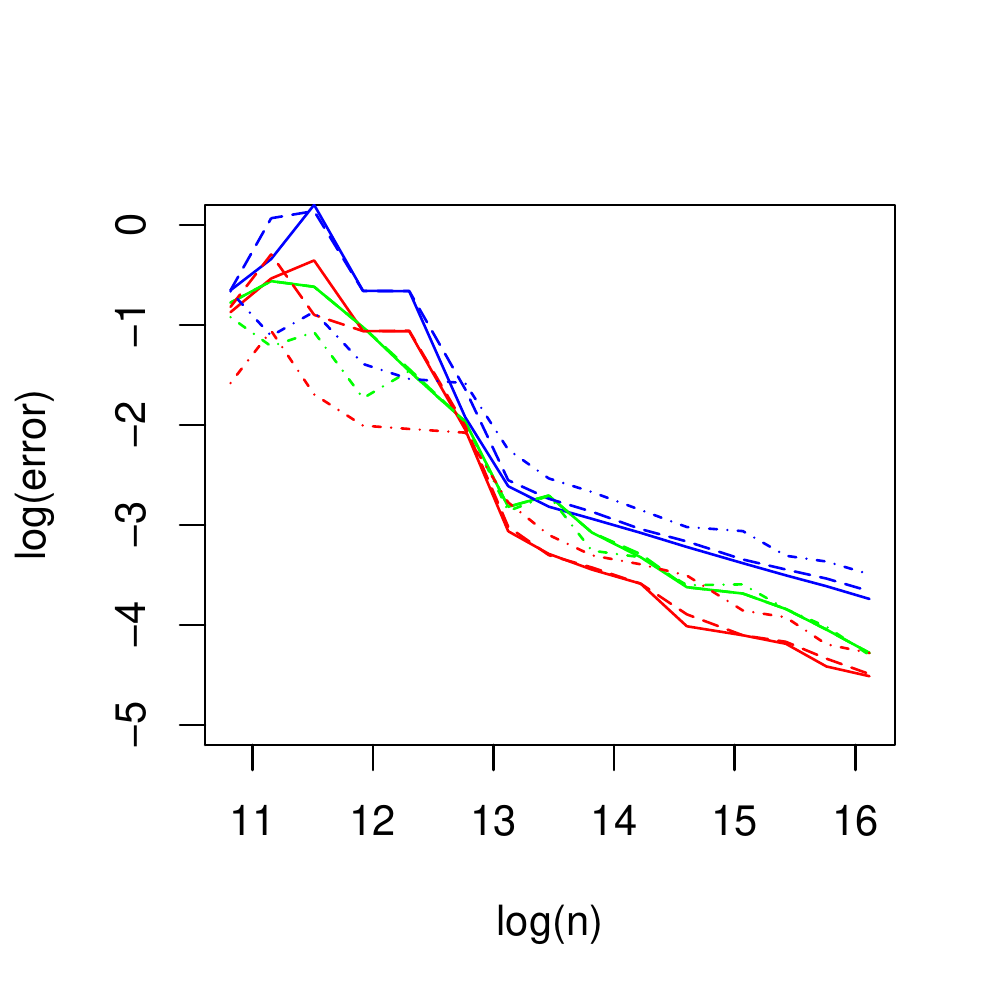}
\vspace{-1em}
\caption{jumpmean summary}
\end{subfigure}

\begin{subfigure}{0.32\textwidth}
\centering
\includegraphics[scale=0.55]{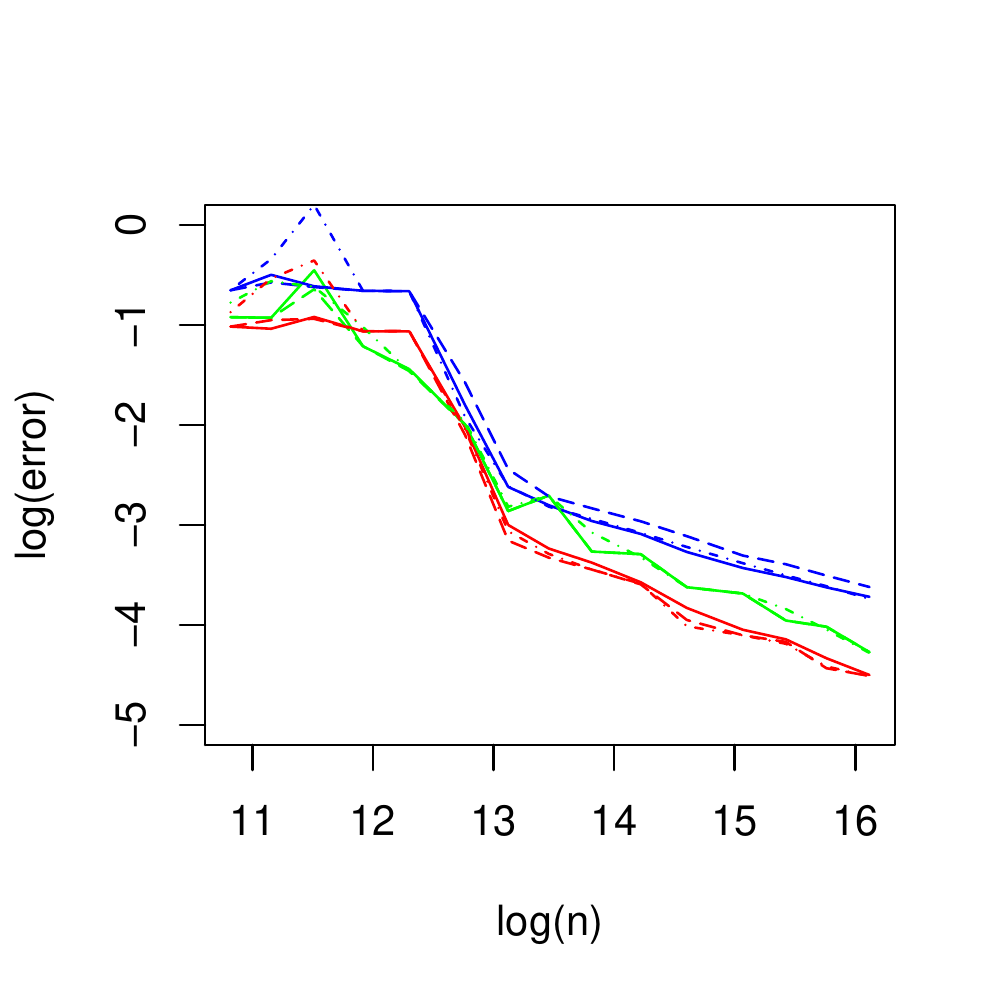}
\vspace{-1em}
\caption{none summary}
\end{subfigure}
\begin{subfigure}{0.32\textwidth}
\centering
\includegraphics[scale=0.55]{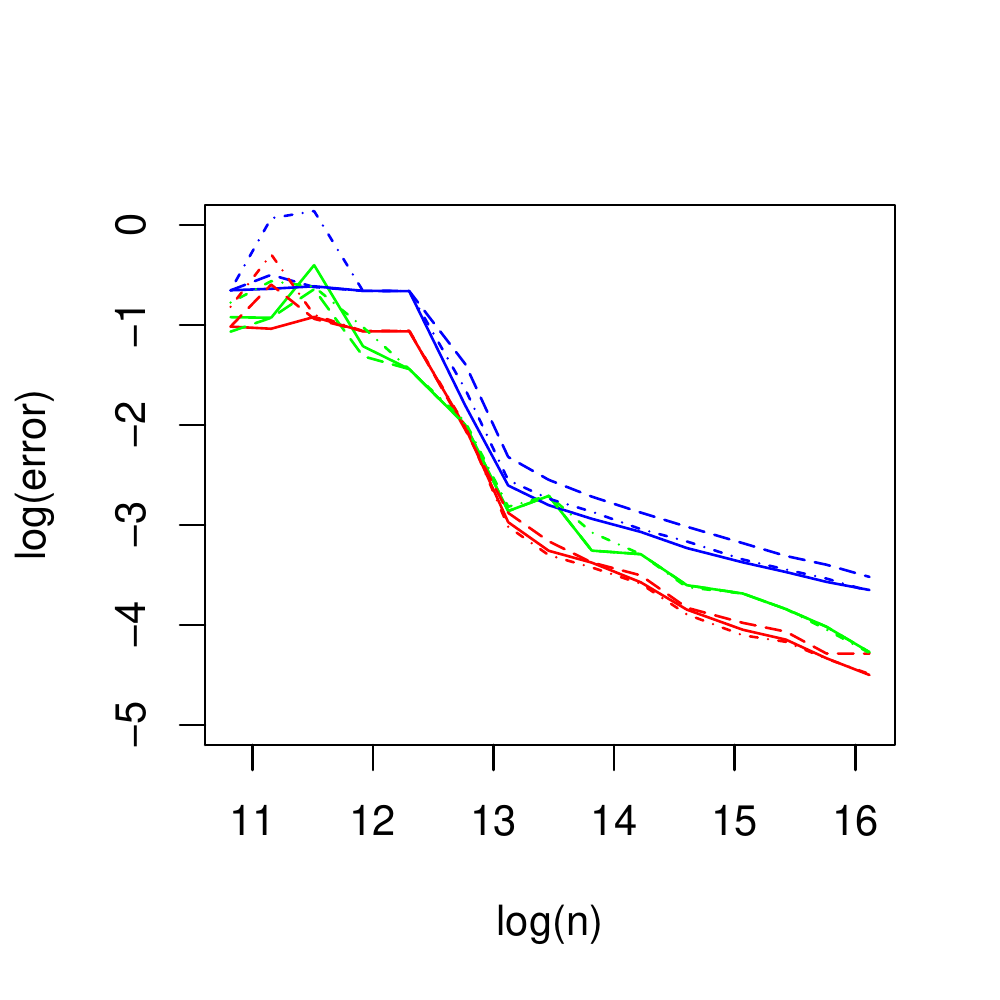}
\vspace{-1em}
\caption{POS summary}
\end{subfigure}
\begin{subfigure}{0.32\textwidth}
\centering
\includegraphics[scale=0.55]{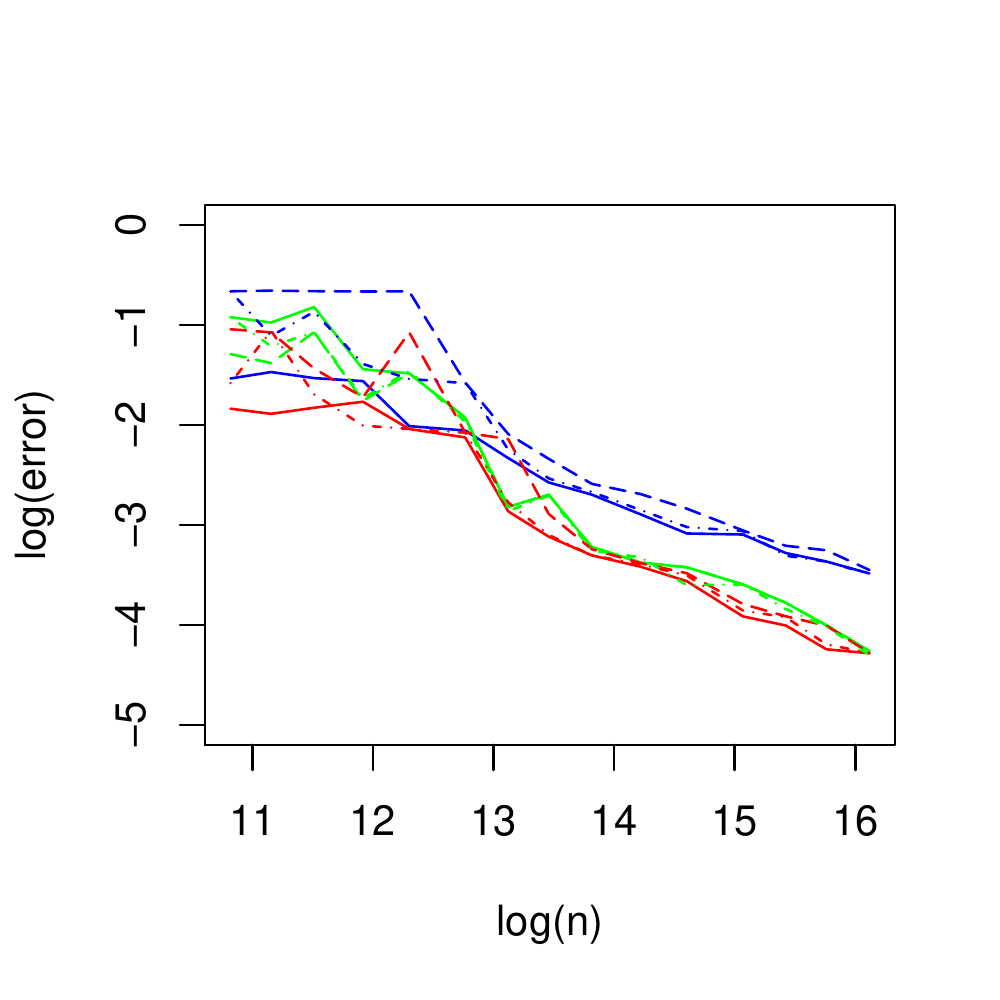}
\vspace{-1em}
\caption{MAX summary}
\end{subfigure}

\caption{Superposition of the median lines of Figure \ref{fig_results} by selection method and by calibration variant. Each color corresponds to one emission density. In Subfigures (a)-(c), the full lines correspond to the basic selection method, the dashed ones to the POS method and the dotted ones to the MAX method. In Subfigures (d)-(f), the full lines correspond to the eachjump method, the dashed ones to the jumpmax method and the dotted ones to the jumpmean method.} 
\label{fig_results_summaries}
\end{figure}

\begin{figure}
\vspace{-2em}
\centering
\includegraphics[scale=0.55]{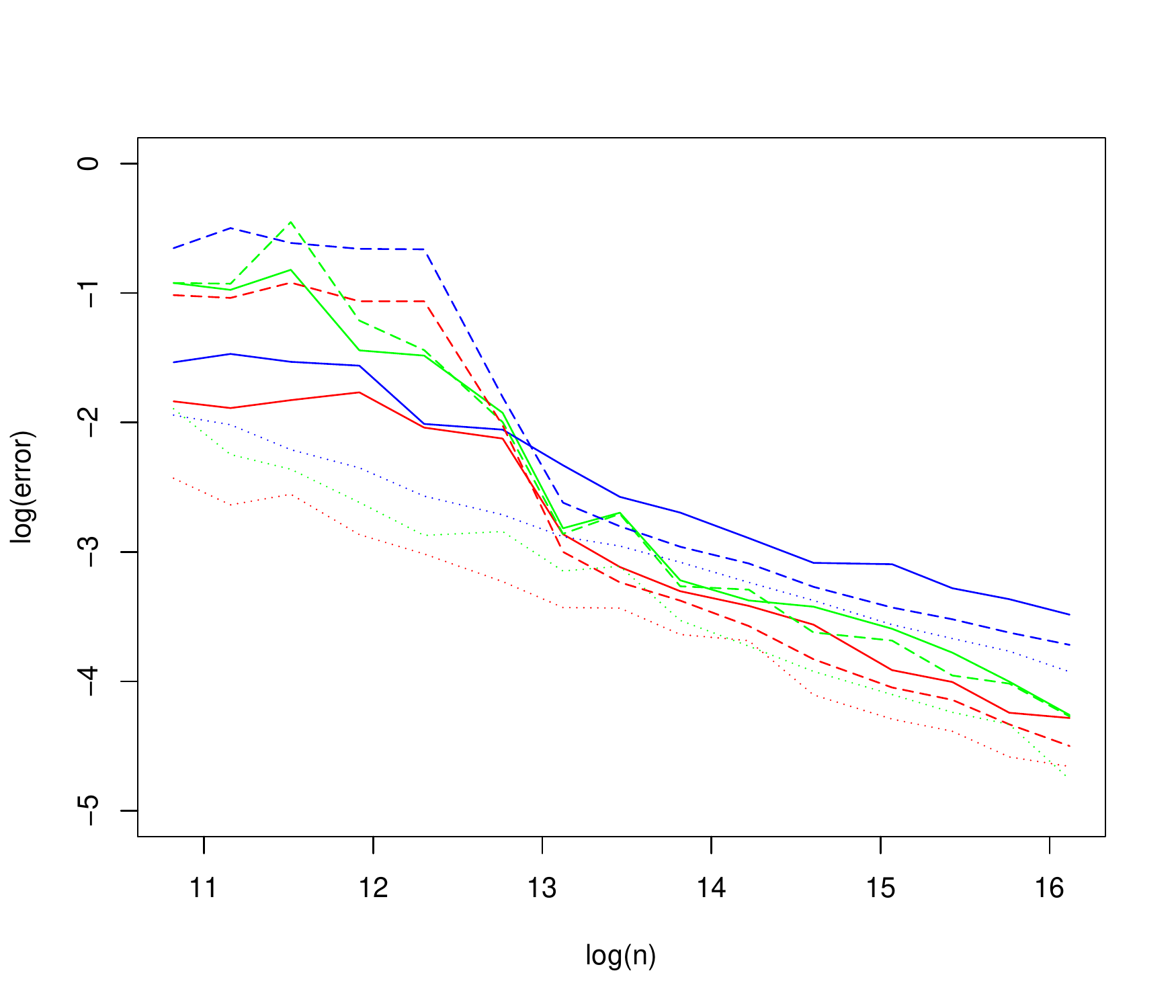}

\vspace{-6.5cm}
\hspace{6cm}\includegraphics[scale=0.15]{legende.png}
\vspace{4.5cm}
\caption{Comparison of the errors for the eachjump MAX method (full lines), for the eachjump method (dashed lines) and for the oracle estimators (dotted lines). For each $k$, the oracle estimator is defined as $\hat{f}^{(M^\text{oracle}_k)}_k$ where $M^\text{oracle}_k$ minimizes $M \longmapsto \| \hat{f}^{(M)}_k - f^*_k \|_2$. The oracle corresponds to the best estimator one could possibly select among the preliminary estimator if the true densities were known.}
\label{fig_decr_example}
\end{figure}

When the number of observations $n$ is large enough, the logarithm of the error decreases linearly with respect to $\log(n)$. This corresponds to the asymptotic convergence regime: the error is expected to decrease as a power of the number of observations $n$ when $n$ tends to infinity. The corresponding slopes are listed in Table \ref{table_convergence_rates_summary}.

\begin{table}[!h]
\centering
\renewcommand{\arraystretch}{1.3}
\begin{tabular}{l|c|c|c}
\hline
\multicolumn{1}{|c|}{\multirow{2}{*}{Estimator}} & \multicolumn{3}{c|}{Convergence rate exponents} \\ \cline{2-4}
\multicolumn{1}{|c|}{} & Uniform & Sym. Beta & \multicolumn{1}{c|}{Beta} \\ \hhline{====}
Eachjump     & $-0.500 \pm 0.046$ & $-0.347 \pm 0.007$ & $-0.470 \pm 0.015$ \\ 
Eachjump POS & $-0.503 \pm 0.047$ & $-0.327 \pm 0.008$ & $-0.469 \pm 0.015$ \\ 
Eachjump MAX & $-0.480 \pm 0.052$ & $-0.335 \pm 0.009$ & $-0.449 \pm 0.015$ \\ 
\hline
Jumpmean     & $-0.532 \pm 0.048$ & $-0.349 \pm 0.006$ & $-0.471 \pm 0.017$ \\ 
Jumpmean POS & $-0.540 \pm 0.048$ & $-0.350 \pm 0.006$ & $-0.456 \pm 0.016$ \\ 
Jumpmean MAX & $-0.493 \pm 0.049$ & $-0.374 \pm 0.009$ & $-0.437 \pm 0.015$ \\ 
\hline
Jumpmax      & $-0.500 \pm 0.046$ & $-0.349 \pm 0.006$ & $-0.464 \pm 0.016$ \\ 
Jumpmax POS  & $-0.492 \pm 0.046$ & $-0.358 \pm 0.006$ & $-0.442 \pm 0.015$ \\ 
Jumpmax MAX  & $-0.480 \pm 0.052$ & $-0.404 \pm 0.009$ & $-0.466 \pm 0.015$ \\ \hhline{====}
Cross Validation & $-0.434 \pm 0.007$ & $-0.263 \pm 0.011$ & $-0.377 \pm 0.008$ \\ \hhline{====}
Oracle       & $-0.517 \pm 0.048$ & $-0.360 \pm 0.006$ & $-0.459 \pm 0.017$ \\ \hline
Hidden states known & $-0.526 \pm 0.031$ & $-0.293 \pm 0.005$ & $-0.428 \pm 0.007$ \\ \hline
Minimax (Hölder) & $-0.5$	 & $-3/11 \approx -0.273$  & $-3/7 \approx -0.429$ \\ \hline
\end{tabular}
\caption{Exponents of the rates of convergence for the different algorithms. The rates are obtained from a linear regression with the relation $\log(\| \hat{f}_k - f^*_k \|_2) \sim \log(n)$ for the estimators $\hat{f}_k$ computed with $n \geq 700,000$ observations ($n \geq 1,000,000$ for the cross validation estimators from Section \ref{sec_VC}). The smaller the exponent, the faster the estimators converge. The line ``hidden states known" is obtained by density estimation when the hidden states are observed.}
\label{table_convergence_rates_summary}
\end{table}

For each state, the confidence intervals of the rates of all estimators\---including the oracle estimators\---have a common intersection (except for the symmetrized Beta distribution in the jumpmax MAX variant, whose estimators seem to converge faster than the others). This tends to confirm that the calibration and selection variants are asymptotically equivalent. This phenomenon is also visible in Figures \ref{fig_results} and \ref{fig_results_summaries}: in the asymptotic regime, the errors decrease in a similar way for all methods.

Furthermore, the rates of convergence are clearly distinct. The uniform distribution is estimated with a rate of convergence of approximately $n^{-1/2}$, which is also the best possible rate (it corresponds to a parametric estimation rate). In comparison, the rate of convergence for the symmetrized Beta distribution is much slower (around $n^{-0.36}$). This shows that the algorithm effectively adapts to the regularity of each state and that one irregular emission density does not deteriorate the rates of convergence of the other densities.

Note that the above rates are in accordance with the minimax rates as far as the Hölder regularity is concerned.
The minimax Hölder rate for the symmetrized Beta (which is $0.6$-Hölder) is $n^{-3/11}$, or approximately $n^{-0.27}$, which means our estimator converges faster than the minimax rate would suggest.
The minimax Hölder rate for the Beta distribution (which is $3$-Hölder) is $n^{-3/7}$, or approximately $n^{-0.43}$, which is around the observed value.

\subsection{Comparison with cross validation}
\label{sec_VC}

In this section, we use a cross validation procedure based on our spectral estimators to check whether our method actually improves estimation accuracy.œ

When estimating a density by taking an estimator within some class (the model), two sources of error appear: the bias, that is the (deterministic) distance between the true density and the model, and the variance, that is the (random) error of the estimation within the model. Small models will have a large bias but a small variance, while large models will have a small bias and a large variance.
The core issue of model selection is to select a model that minimizes the total error, that is large enough to accurately describe the true densities and small enough to prevent overfitting: in other words, perform a bias-variance tradeoff.

Cross validation seeks to achieve such a tradeoff by computing an estimate of the total error. This is done by splitting the sample into two sets, the training sample being used for the calibration of the estimator and the validation sample for measuring the error. Taking the mean of these errors for different splits between training and validation samples provides an estimator of the total error. This method has become popular for its simplicity of use. We refer to the survey of \cite{arlot2010survey} for an overview on this method and its guarantees.

\subsubsection{Risk}

We use the least squares criterion of Algorithm~\ref{alg:LS} to quantify the error of the estimators.
Since the guarantees on spectral estimators rely on the $\Lbf^2$ norm, a least squares criterion is more natural than the likelihood. In addition, the spectral estimators might take negative values depending on the orthonormal basis, which is not a problem as far as $\Lbf^2$ error is concerned but can be an issue for the likelihood.

Let us first recall this criterion. Given an orthonormal basis $(\varphi_i)_{i \in \Nbb}$ of $\Lbf^2(\Ycal, \mu)$, define the coordinate tensor of the empirical distribution of the triplet $(Y_1, Y_2, Y_3)$ on this basis by
\begin{equation*}
\hat{\Mbf}(a,b,c):= \frac{1}{n} \sum_{s=1}^{n}\varphi_{a}(Y_{s})\varphi_{b}(Y_{s+1})\varphi_{c}(Y_{s+2}) \quad \text{ for all }a, b, c \in \Nbb.
\end{equation*}

Given a transition matrix $\Qbf$ of size $K$, a stationary distribution $\pi$ of $\Qbf$ and a vector of densities $\fbf = (f_1, \dots, f_K)$, define the coordinate matrix $\Obf$ of $\fbf$ by $\Obf(b,k) = \langle \varphi_b, f_k \rangle$. Let $\Mbf_{(\pi, \Qbf, \fbf)}$ be the coordinate tensor of the distribution of $(Y_1, Y_2, Y_3)$ under the parameters $(\pi, \Qbf, \fbf)$, that is
\begin{equation*}
\Mbf_{(\pi, \Qbf, \fbf)}(\cdot, b, \cdot) = \Obf \Diag[\pi] \Qbf \Diag[\Obf(b,\cdot)] \Qbf \Obf^\top \quad \text{ for all } b \in \Nbb.
\end{equation*}

The empirical least squares criterion is $\| \Mbf_{(\pi, \Qbf, \fbf)} - \hat{\Mbf} \|_F^2$. It corresponds to the $\Lbf^2$ error between the empirical distribution of three consecutive observations and the theoretical distribution under the parameters $(\pi, \Qbf, \fbf)$.

\subsubsection{Implementation}

We use 10-fold cross validation, that is we split the sequence into 10 segments of same size $I_1, \dots, I_{10}$. In order to avoid interferences between samples, we prune the ends of each segment, so that the observations in each segment can be considered independent. In practice, we take a gap of 30 observations between two segments.

We ran 150 simulations, 10 per value of $n$, with the same parameters as in Section \ref{sec_sim_setting}. Each simulation is as follows.

For each segment $I_j$, we run the spectral algorithm on all models $\Pfrak_M$ for $\verb?M_min? \leq M \leq \verb?M_max?$ using only the observations from the other segments. The transition matrix is estimated using an additional step of the spectral method which is adapted from Steps 8 and 9 of Algorithm 1 of \cite{dCGLLC15}. Then, we compute the least squares criterion for the estimated parameters using the segment $I_j$ as observed sample. Finally, for each $M$, we average this error on all segments $I_j$, which gives the least squares cross validation error $E_\text{VC}(M)$.

This cross validation criterion is used to select one model $\hat{M}_\text{VC} \in \argmin_M E_\text{VC}(M)$, from which we construct the final estimators of the emission densities $\hat{f}_k = \hat{f}_k^{(\hat{M}_\text{VC})}$ for all $k$. Note that the selected model is the same for all emission densities.

\subsubsection{Results}

Figure~\ref{fig_VC_selected_models} compares the selected model dimensions for each $n$ using our state-by-state selection method and using the cross validation method. When the number of observations $n$ becomes larger than $10^6$, the cross validation tends to always pick the largest model, which means that it does not prevent overfitting as well as our method.

The $\Lbf^2$ errors on the emission densities are shown in Figure~\ref{fig_VC_error}. It appears that the cross validation has a lower error for small $n$ ($n \leq 350,000$) than our method.
However, for larger values of $n$, the errors becomes larger than the ones of our method (see Figure~\ref{fig_decr_example}) by up to one order of magnitude, and only start decreasing once the selected model is set to the maximum dimension.

Finally, the estimated rates of convergence are shown in Table~\ref{table_convergence_rates_summary}. Our state-by-state method outperforms the cross validation method for all emission densities. The cross validation estimators only reach the minimax rate of convergence for the less regular density: the symmetrized Beta, and even then they converge slower than the state-by-state estimator. All other emission densities are estimated slower than their minimax rate.

\begin{figure}
\vspace{-2em}
\centering
\includegraphics[scale=0.6]{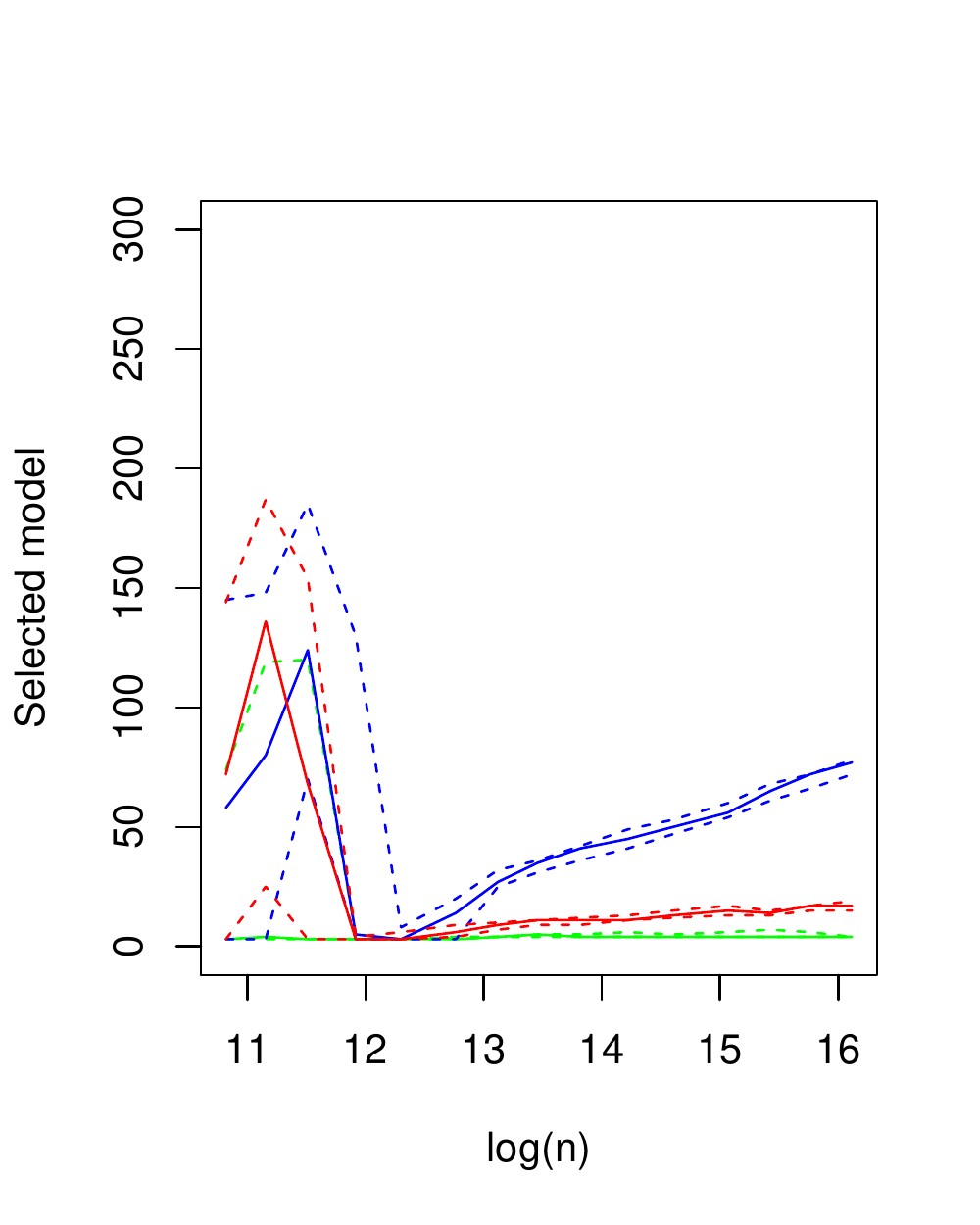}
\includegraphics[scale=0.6]{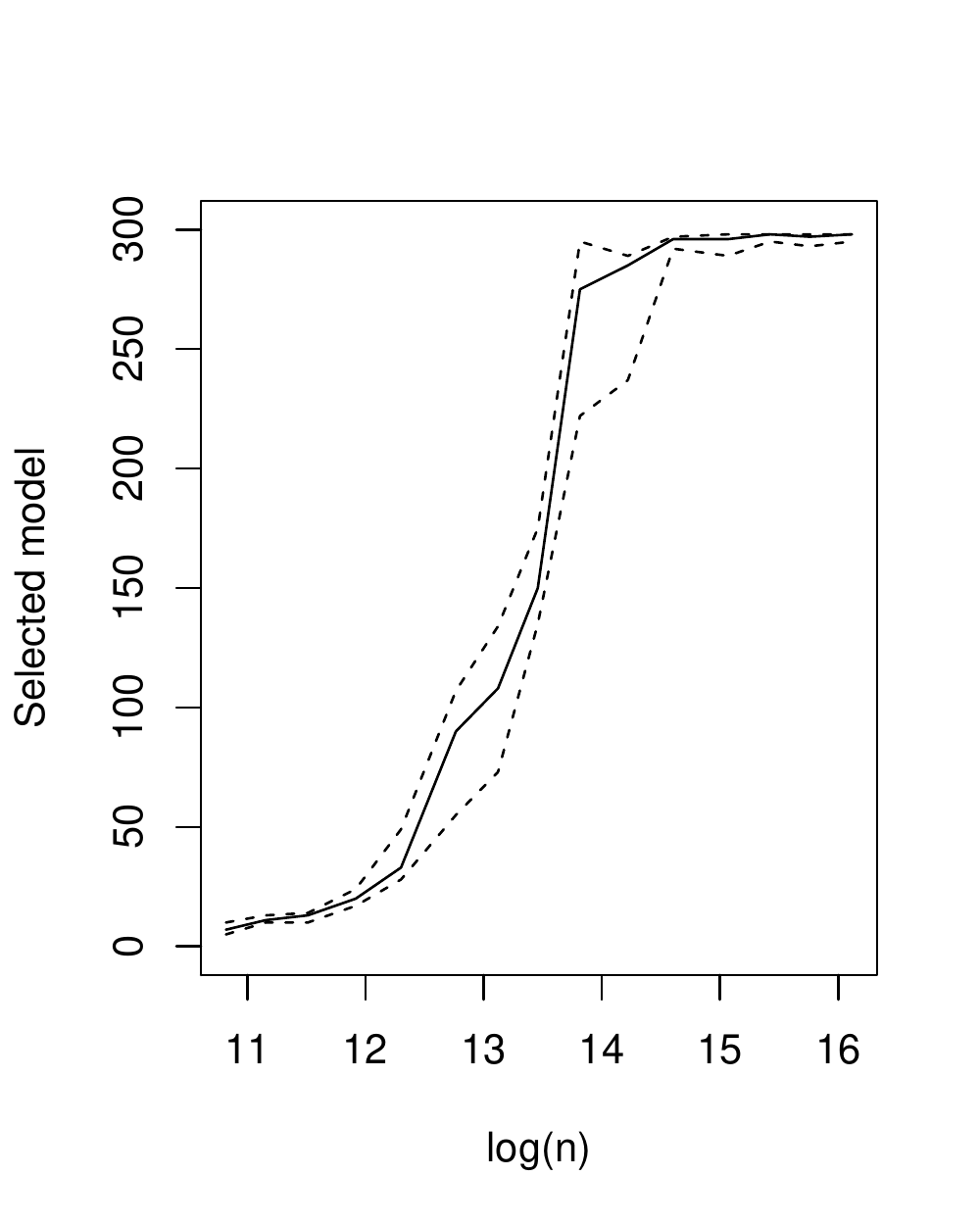}
\vspace{-1em}
\caption{Selected model dimensions for each $n$ using our state-by-state selection method (left) and 10-fold cross validation (right). The full lines are the median model dimensions and the dashed lines are the 25 and 75 percentiles.}
\label{fig_VC_selected_models}
\end{figure}

\begin{figure}
\vspace{-2em}
\centering
\includegraphics[scale=0.6]{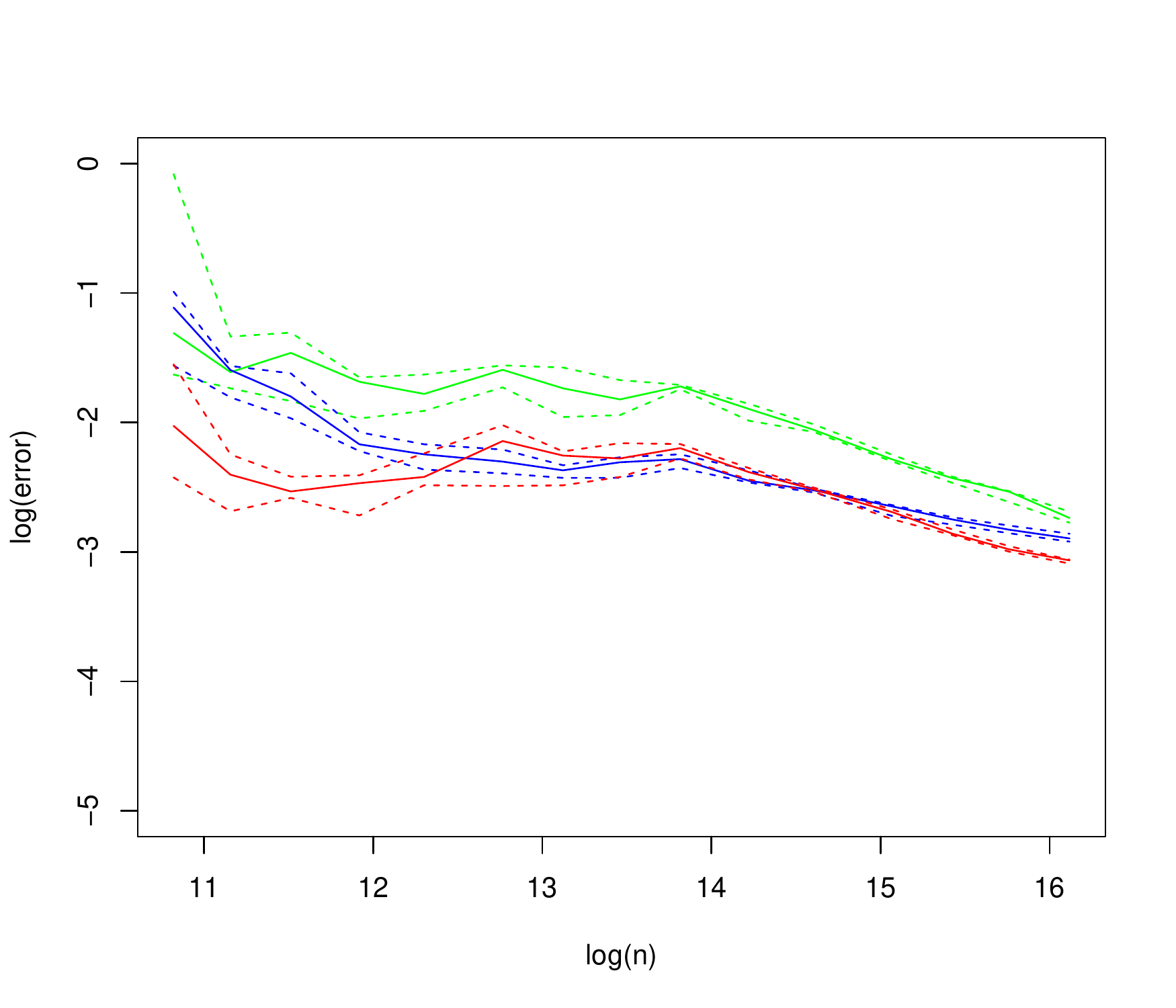}

\vspace{-7.5cm}
\hspace{6cm}\includegraphics[scale=0.15]{legende.png}
\vspace{6cm}
\caption{Error of the cross validation estimators for each $n$ using 10-fold cross validation. The full lines are the median errors for each density and the dashed lines are the 25 and 75 percentiles.}
\label{fig_VC_error}
\end{figure}

\subsection{Algorithmic complexity}
\label{sec_complexity}

In the following, we treat $K$ as a constant as far as the algorithmic complexity is concerned. The different complexities are summarized in Table \ref{table_complexity_summary}.

\subsubsection{Spectral algorithm (see Section \ref{sec_spectral_th})}

We consider the algorithmic complexity of estimating the emission densities for all models $M$ such that $M_{\min} \leq M \leq M_{\max}$ with $n$ observations and auxiliary parameters $r$ and $m$ depending on $n$ and $M$ (upper bounded by $m_{\max}$ and $r_{\max}$).

Step 1 can be computed for all models with $O(n M_{\max} m_{\max}^2)$ operations. It is the only step whose complexity depends on $n$. Steps 2 and 3 require $O(m^3 M)$ operations for each model and Steps 4 to 7 require $O(M r)$ operations for each model, for a total of $O(n M_{\max} m_{\max}^2 + M_{\max}^2 m_{\max}^3 + M_{\max}^2 r_{\max})$ operations.

In practice, one takes $m \propto \log(n)$, $r \propto \log(n) + \log(M)$ and $M_{\max} \leq n$, so that the total complexity of the spectral algorithm is $O(n \log(n)^2 M_{\max})$.

In comparison, the complexity of the spectral algorithm of \cite{dCGLLC15} is $O(n M_{\max}^3)$ because of Step 1. This becomes much larger than our complexity when $M_{\max}$ grows as a power of $n$ (which is necessary in order to reach minimax rates).

\subsubsection{Least squares algorithm (see Section \ref{sec_leastsquares_th})}

We consider the algorithmic complexity of estimating the emission densities for all models $M$ such that $M_{\min} \leq M \leq M_{\max}$ with $n$ observations.

Step 1 is similar to the one of the spectral algorithm, but with $O(n M_{\max}^3)$ operations. The complexity of Step 2 is more difficult to evaluate. Since the criterion is nonconvex, finding the minimizer requires to run an approximate minimization algorithm whose complexity $C_n$ will depend on the desired precision\---which will in turn depend on the number of observations $n$\---and on the initial points. As discussed in \cite{lehericy2016order}, this is usually the longest step when computing least squares estimators.
Thus, the total complexity of the least squares algorithm is $O(n M_{\max}^3 + C_n)$.

Note that despite the worse sample complexity, the least squares algorithm is tractable and can greatly improve the estimation for small sample size. As shown in Section \ref{sec_results}, the spectral algorithm is unstable for small samples, which makes the state-by-state selection procedure return abnormal results. This can be explained by the matrix inversions of the spectral method, which sometimes lead to nearly singular matrices when the noise is too large. On the other hand, the least squares method does not involve any matrix inversion, and often gives better results than the spectral estimators, as shown in \cite{dCGL15}, thus making it a relevant choice for small to medium data sets.

\subsubsection{Selection method and POS variant (see Sections \ref{sec_lepski_th} and \ref{sec_alternatives})}

We consider the algorithmic complexity of selecting estimators from a family $(\hat{\fbf}^{(M)})_{M_{\min} \leq M \leq M_{\max}}$ of estimators. The selection algorithms can be decomposed in two parts.
\begin{itemize}
\item Compute the distances $\| \hat{f}_k^{(M)} - \hat{f}_k^{(M')} \|_2$ for all $M$, $M'$ and $k$. This has complexity $O(M_{\max}^3)$: it requires to compute the $\Lbf^2$ distance of at most $M_{\max}^2$ couples of functions in a Hilbert space of dimension $M_{\max}$.

\item Compute $\hat{\rho}_k$ defined as the abscissa of the largest jump of the function $\rho \longmapsto \hat{M}_k(\rho)$ for all $k$, where $\hat{M}_k$ is defined as in Section \ref{sec_penalty_calibration}. Note that computing $\hat{M}_k(\rho)$ requires $O(M_{\max}^2)$ operations. An approximate value of $\hat{\rho}_k$ can be computed in $O(\log(\hat{\rho}_k) M_{\max}^2)$ operations, which is usually $O(M_{\max}^2)$.
\end{itemize}
Once the $\hat{\rho}_k$ are known, it is possible to calibrate the penalty in constant time for the three calibrations methods (eachjump, jumpmax and jumpmean) and to select the final models in $O(M_{\max}^2)$ operations.

Thus, the total complexity of the selection algorithm and of its POS variant is $O(M_{\max}^3)$.

\subsubsection{Selection method, MAX variant (see Section \ref{sec_alternatives})}

In the MAX variant, the first step of the standard selection procedure is replaced by computing the distances $\| \hat{f}^{(M_{\max})}_k - \hat{f}^{(M)}_k \|_2$ for all $M$. This has complexity $O(M_{\max}^2)$. The complexity of the other steps remains unchanged.

Thus, the total complexity of the MAX variant of the selection algorithm is $O(M_{\max}^2)$.

\begin{table}
\centering
\renewcommand{\arraystretch}{1.3}
\begin{tabular}{l|l|c|}
\cline{2-3}
 & \multicolumn{1}{|c|}{Algorithm} & Complexity \\ \hhline{-==}
\multirow{2}{*}{\parbox{2cm}{Preliminary\\estimators}} & Spectral method & $O(n \log(n)^2 M_{\max})$ \\ \cline{2-3}
 & Spectral method (\cite{dCGLLC15}) & $O(n M_{\max}^3)$ \\ \cline{2-3}
 & Least squares method & $O(n M_{\max}^3 + C_n)$ \\ \hhline{===}
Selection step & Standard and POS variant & $O(M_{\max}^3)$ \\ \cline{2-3}
 & MAX variant & $O(M_{\max}^2)$ \\ \cline{2-3}
\end{tabular}
\caption{Complexities of the different algorithms. $n$ is the number of observations, $M_{\max}$ is the largest model dimension considered.}
\label{table_complexity_summary}
\end{table}

\section{Application to real data}
\label{sec_appli}

In this section, we present the results of our method on two sets of trajectories. Trajectories are a typical example of dependent data that shows several behaviours depending on the activity of the entity being tracked, which makes hidden Markov models a popular modelling choice. For instance, the movement of a fisher is not the same depending on whether he's travelling to the next fishing zone or actually fishing.

The first data set follows artisanal fishers in Madagascar. The second one contains seabird movements. Studying the movements of fishers and seabirds has many applications, for instance understanding the fishing habits of the tracked entity, controlling the fishing pressure on local ecosystems and monitoring the dynamics of coastal ecosystems, see for instance \cite{boyd2014HmmUsesOfBirdTrajectories, vermard2010HmmUsesOfVesselTrajectories} and references therein.

\subsection{Artisanal fishery}

We use GPS tracks of artisanal fishers with a regular sampling period of 30 seconds. These tracks were produced by Faustinato Behivoke (Institut Halieutiques et des Sciences Marines, Université de Toliara, Madagascar) and Marc Léopold (IRD), who recorded artisanal fishers from Ankilibe, in Madagascar. Their fishing method is a seine netting.

From this data, we compute the velocity of the fisher during each time step. In order to estimate densities on $[0,1]$, we divide this velocity by an upper bound of the maximum observed velocity.
We consider the observation space $\Ycal = [0,1]$ endowed with the dominating measure $\delta_0 + \text{Leb}$, where $\delta_0$ is the dirac measure in zero and Leb is the Lebesgue measure on $[0,1]$. As a proof of concept, we use the orthonormal basis consisting of the trigonometric basis on $[0,1]$ and the indicator function of $\{0\}$, that is the family $(\varphi_m)_{m \in \Nbb}$ defined on $[0,1]$ by
\begin{align*}
&\text{if } x = 0,
	\begin{cases}
	\varphi_0(x) = 1 \\
	\varphi_m(x) = 0 \quad \text{for all } m \in \Nbb^*
	\end{cases} \\
&\text{if } x \neq 0,
	\begin{cases}
	\varphi_0(x) = 0 \\
	\varphi_1(x) = 1 \\
	\varphi_{2m}(x) = \sqrt{2} \cos(2\pi mx) \quad \text{for all } m \in \Nbb^* \\
	\varphi_{2m+1}(x) = \sqrt{2} \sin(2\pi mx) \quad \text{for all } m \in \Nbb^*
	\end{cases}
\end{align*}

The number of hidden states is chosen using the spectral thresholding method of \cite{lehericy2016order}. This methods consists is based on the fact that the rank of the spectral tensor $\Ebb \hat{\Nbf}_{m,m}$ (with the notations of Algorithm~\ref{alg:Spectral_complet} in Appendix~\ref{app_spectral}) is the number of hidden states. This is visible in the spectrum of $\hat{\Nbf}_{m,m}$ by an elbow, as shown in Figure~\ref{fig_elbow_selectionK}. Based on these spectra, we use two hidden states.

\begin{figure}[!t]
\centering
\vspace{-1em}
\includegraphics[scale=0.57]{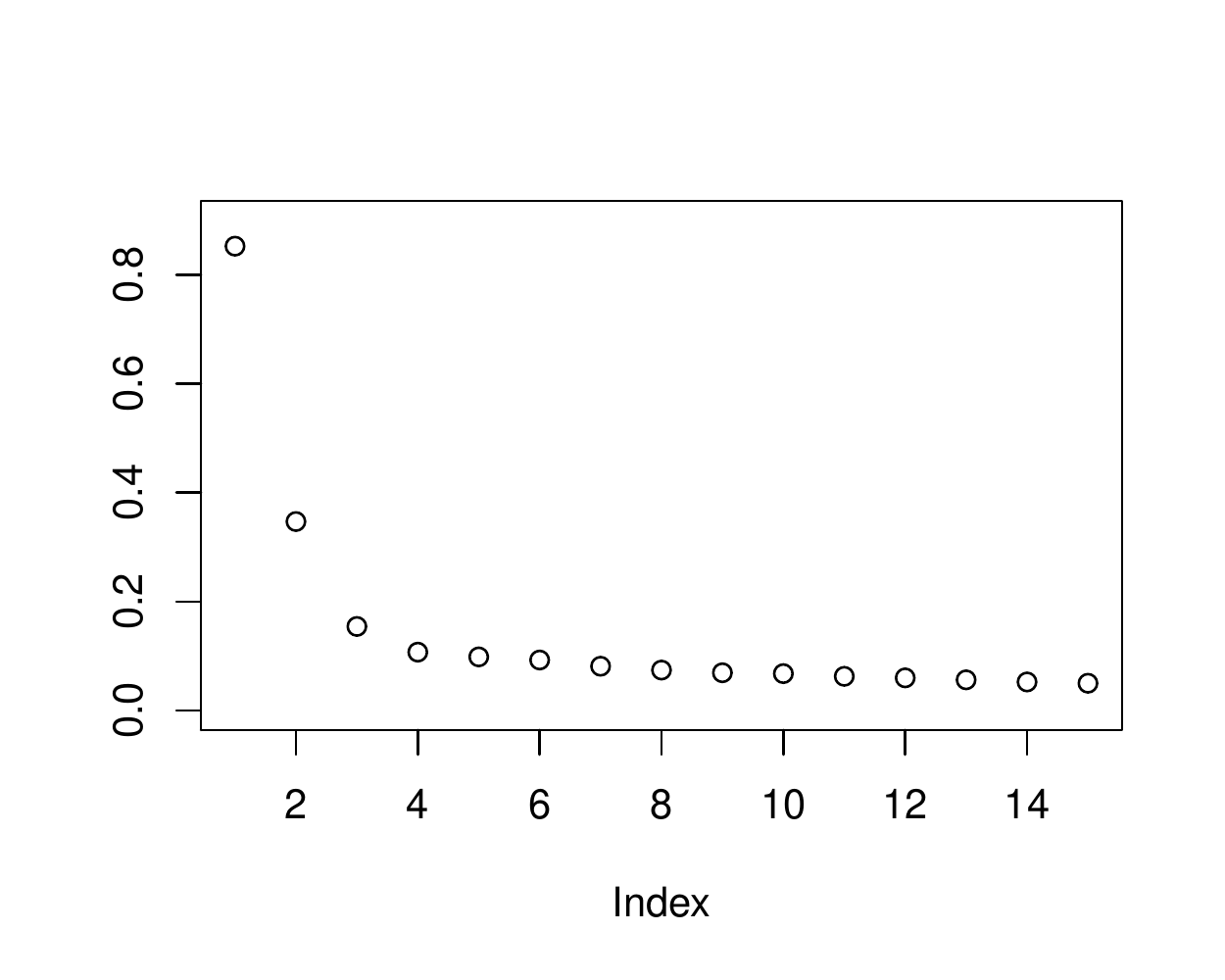}
\includegraphics[scale=0.57]{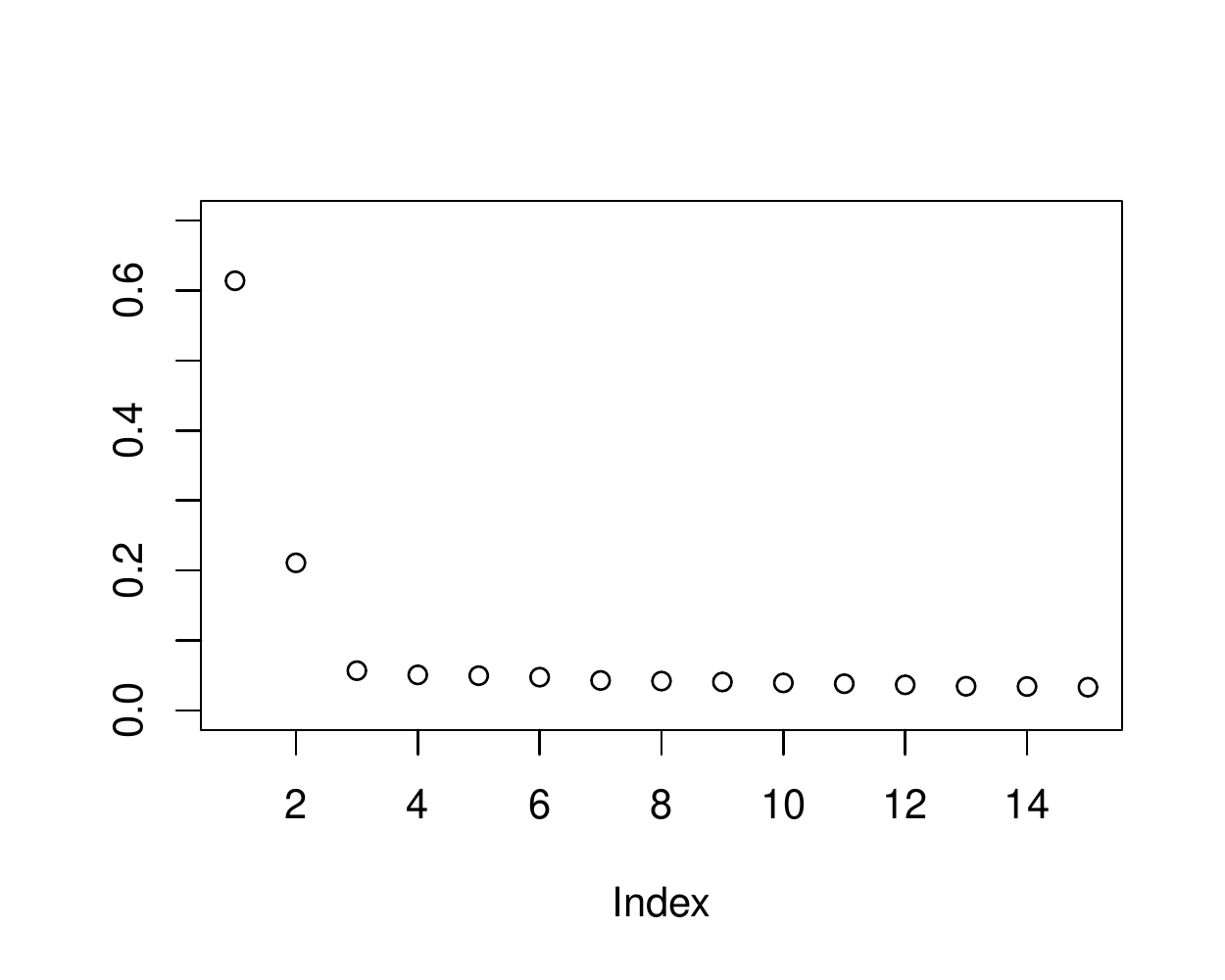}
\vspace{-1em}
\caption{First 15 eigenvalues of the spectrum of the empirical tensor $\hat{\Nbf}_{50,50}$ (see Algorithm~\ref{alg:Spectral_complet} in Appendix~\ref{app_spectral}). Left: fisher 1, right: fisher 2.}
\label{fig_elbow_selectionK}
\end{figure}

The results using $\verb?M_max? = 1000$ are shown in Figures \ref{fig_appli_1} and \ref{fig_appli_2}. We took the normalizing velocity large enough that all observed normalized velocities belong to $[0,0.8]$, hence the plot betweeen 0 and 0.8 for the densities.
\begin{figure}[!t]
\centering
\vspace{-1em}
\includegraphics[scale=0.67]{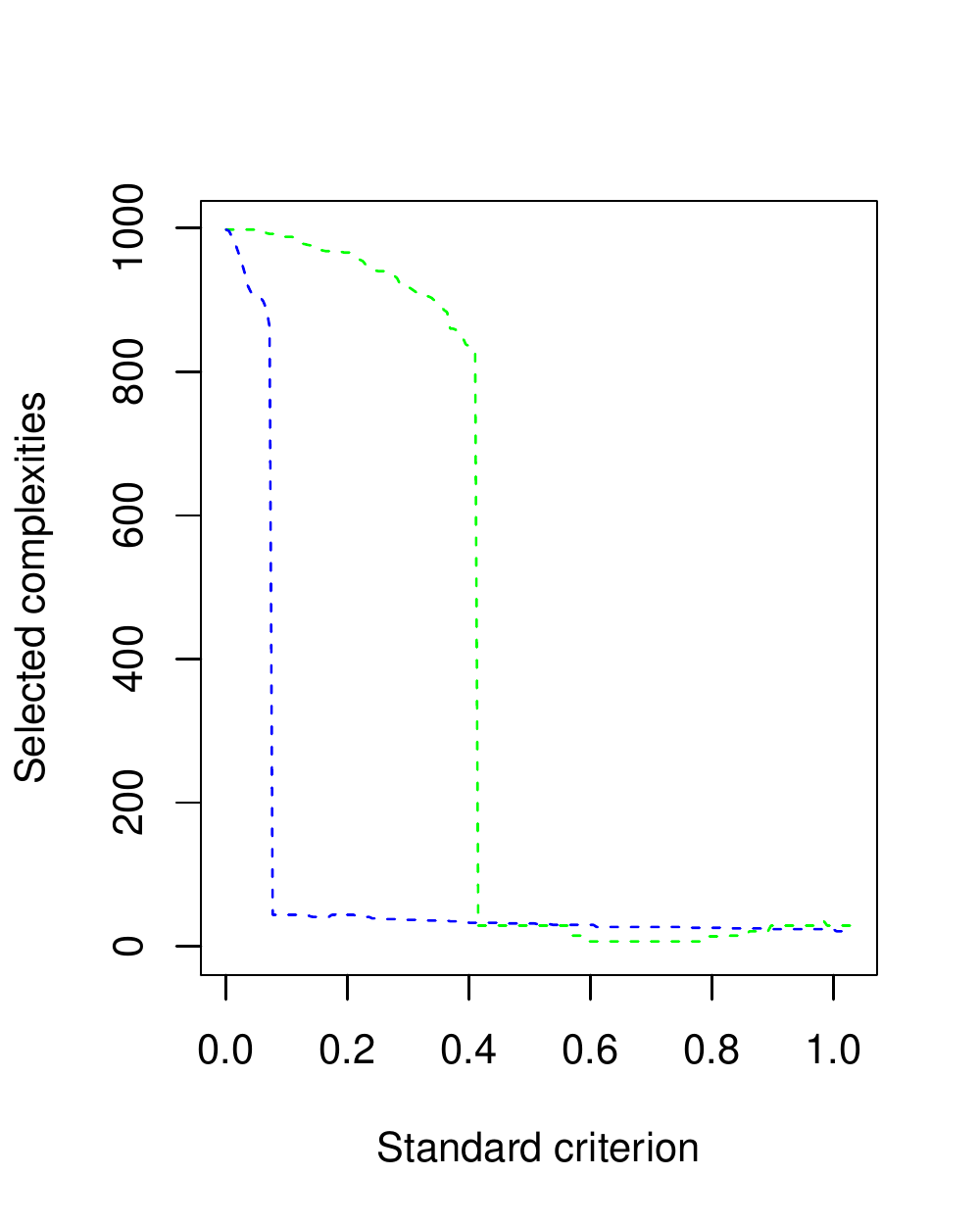} 
\includegraphics[scale=0.67]{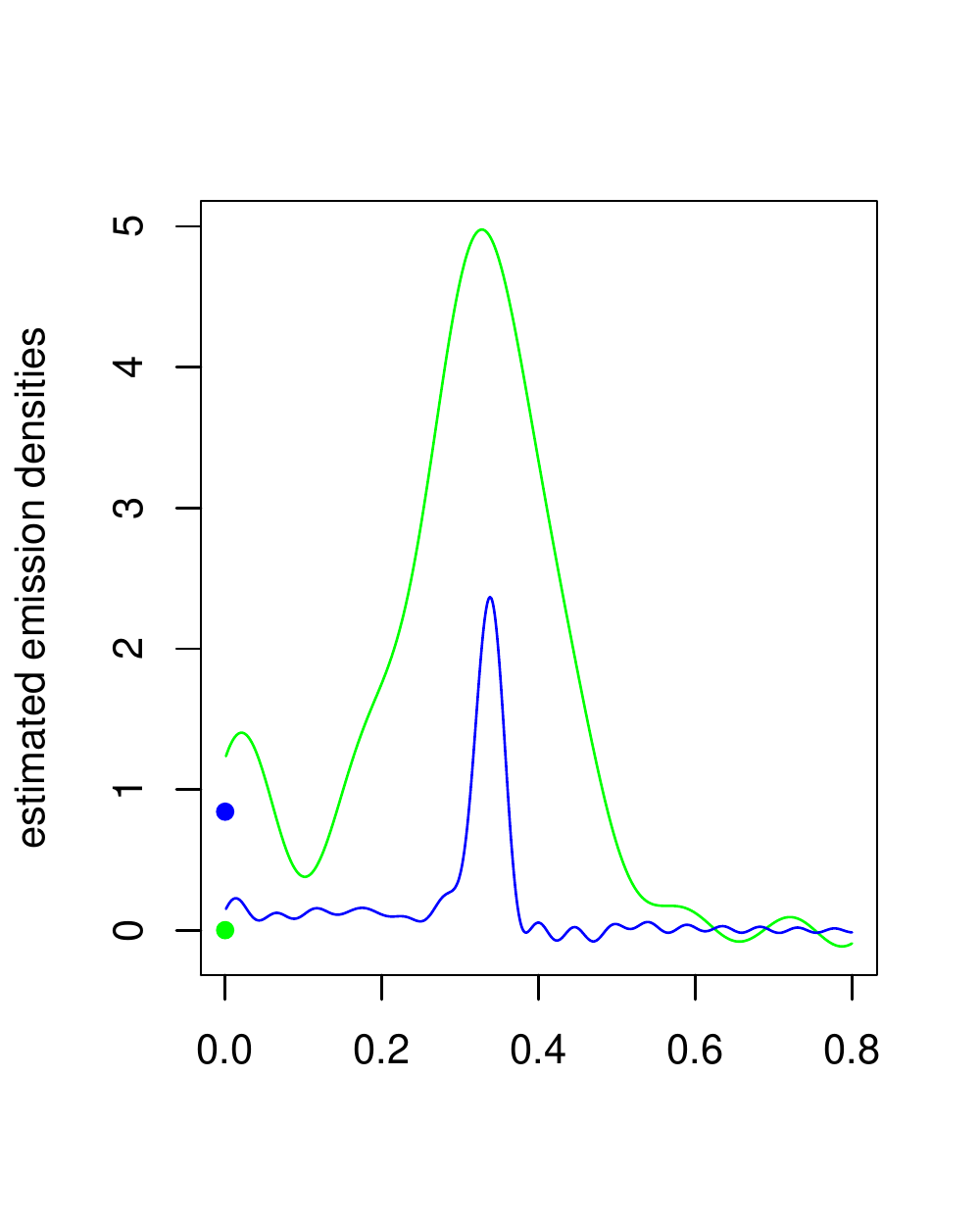}
\vspace{-1em}
\caption{Selected complexities and estimated densities on artisanal fishery data (fisher 1, $n = 17,300$). Green = state 1, blue = state 2. The dirac component is shown as a dot at $y = 0$. The selected dimensions are $(14,41)$.}
\label{fig_appli_1}
\end{figure}

\begin{figure}[!t]
\centering
\vspace{-1em}
\includegraphics[scale=0.67]{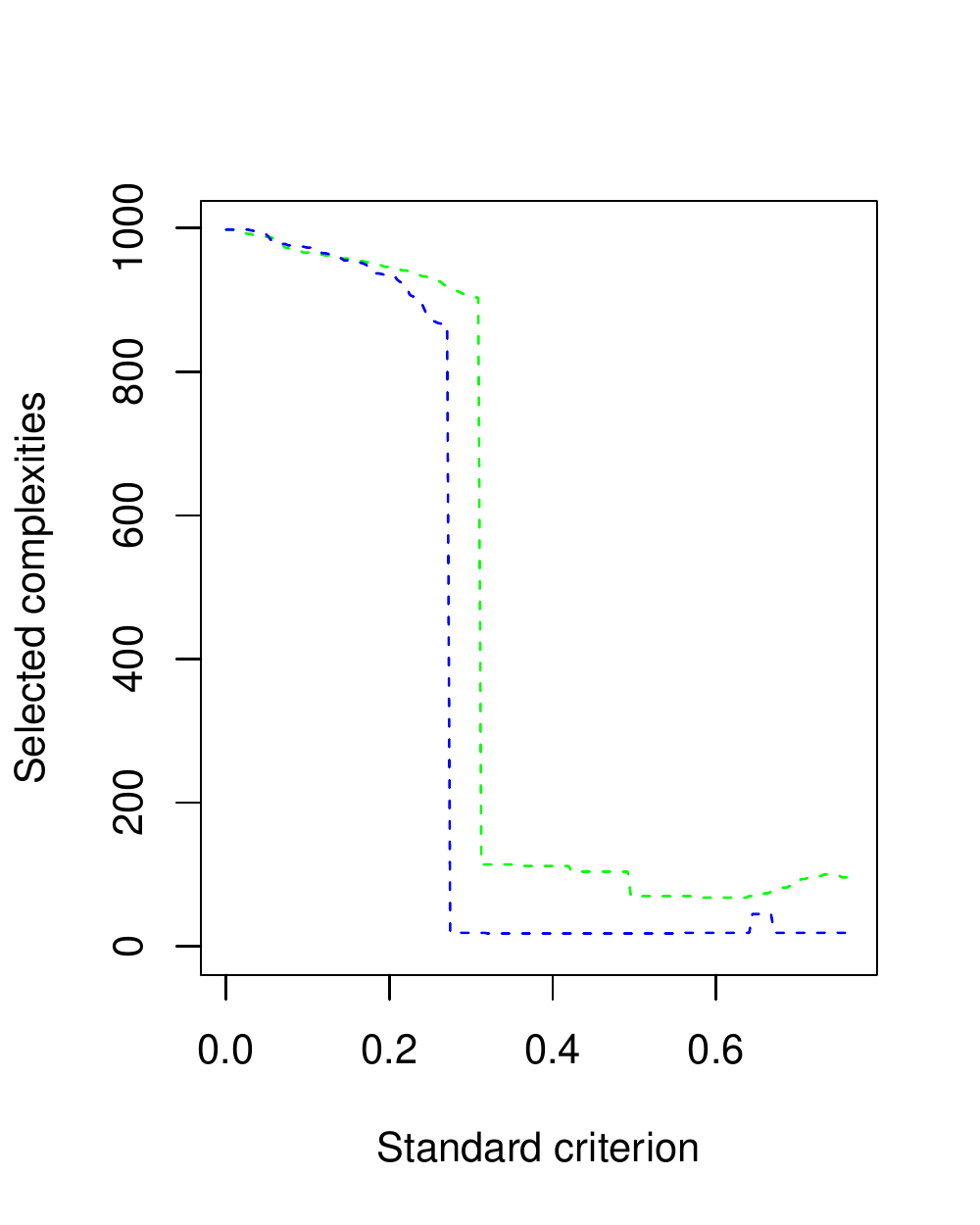} 
\includegraphics[scale=0.67]{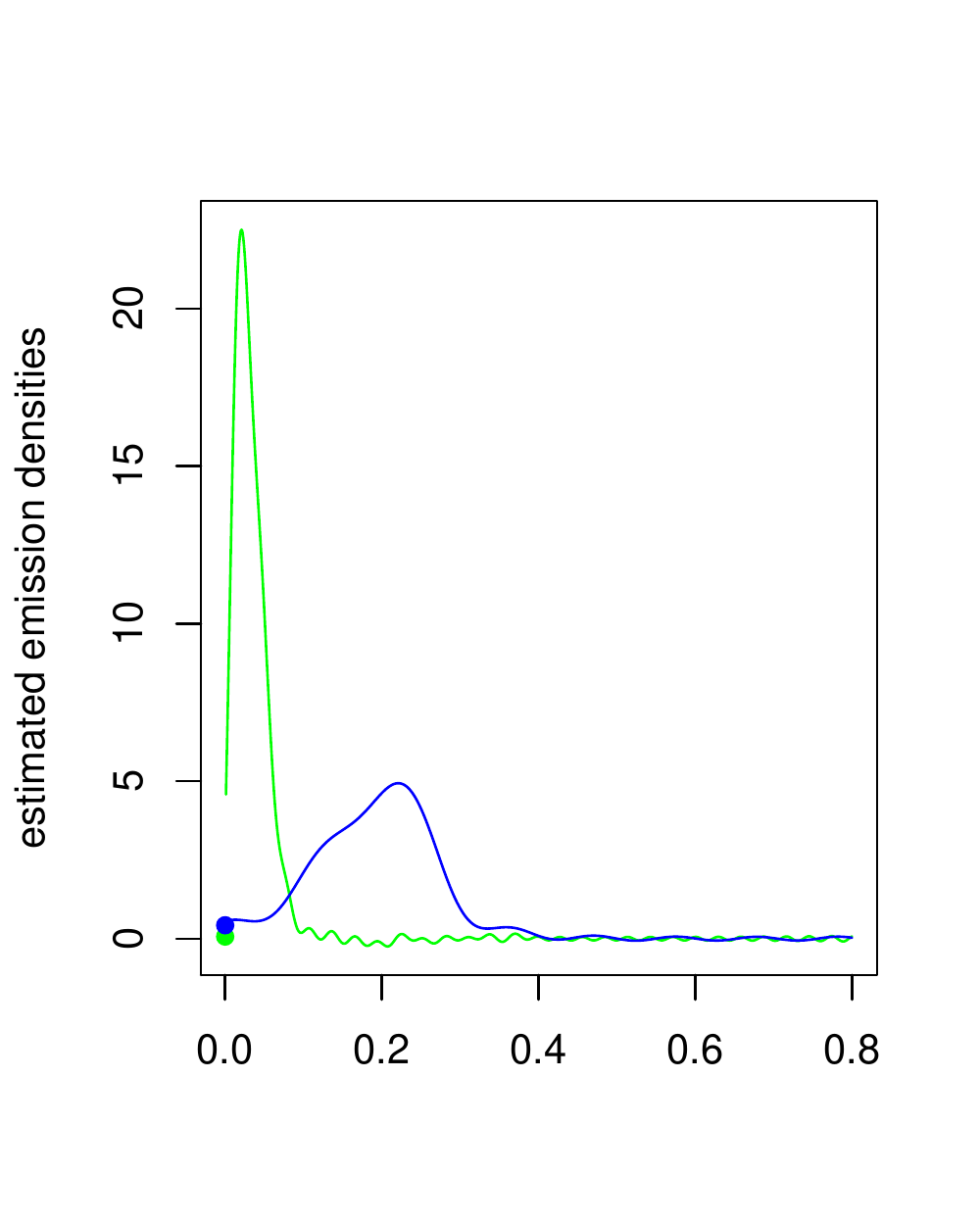}
\vspace{-1em}
\caption{Selected complexities and estimated densities on artisanal fishery data (fisher 2, $n = 11,600$). Green = state 1, blue = state 2. The dirac component is shown as a dot at $y = 0$. The selected dimensions are $(68, 18)$.}
\label{fig_appli_2}
\end{figure}

In both cases, the selected model complexities differ greatly depending on the state. This comes from the fact that in both cases, one of the density is spiked, thus requiring more vectors of the orthonormal basis to be approximated. This illustrates that our method is able to estimate the smoother densities with fewer vectors of the basis, thus preventing overfitting.

As a side note, we needed considerably less observations than in the simulations: around 10,000, compared to 500,000 in the simulations. This can be explained by the fact that each state is very stable, with an estimated probability of leaving the states below 0.02\---compared to 0.3 in the simulations. This is encouraging, as hidden states in real data are expected to be rather stable, especially when the sampling frequency is high, as long as the conditional independance of the observations can be assumed to hold.

\subsection{Seabird foraging}

In this Section, we consider the seabird data from \cite{bertrand2015ParetoRWBirdTrajectories} and we focus on the tracks named cormorant d in this paper.

We apply the same transformation as in the previous section to obtain normalized velocities in $[0,0.8]$ (after removal of anomalous velocities exceeding 150 m/s) and run the spectral algorithm with the trigonometric basis on $[0,1]$ plus the indicator of $\{0\}$. The spectral thresholding gives a number of hidden states equal to two; we set it to three to account for more complex behaviours of the seabirds. The results are shown in Figure~\ref{fig_appli_cormorant_d}.

\begin{figure}[!t]
\centering
\vspace{-1em}
\includegraphics[scale=0.67]{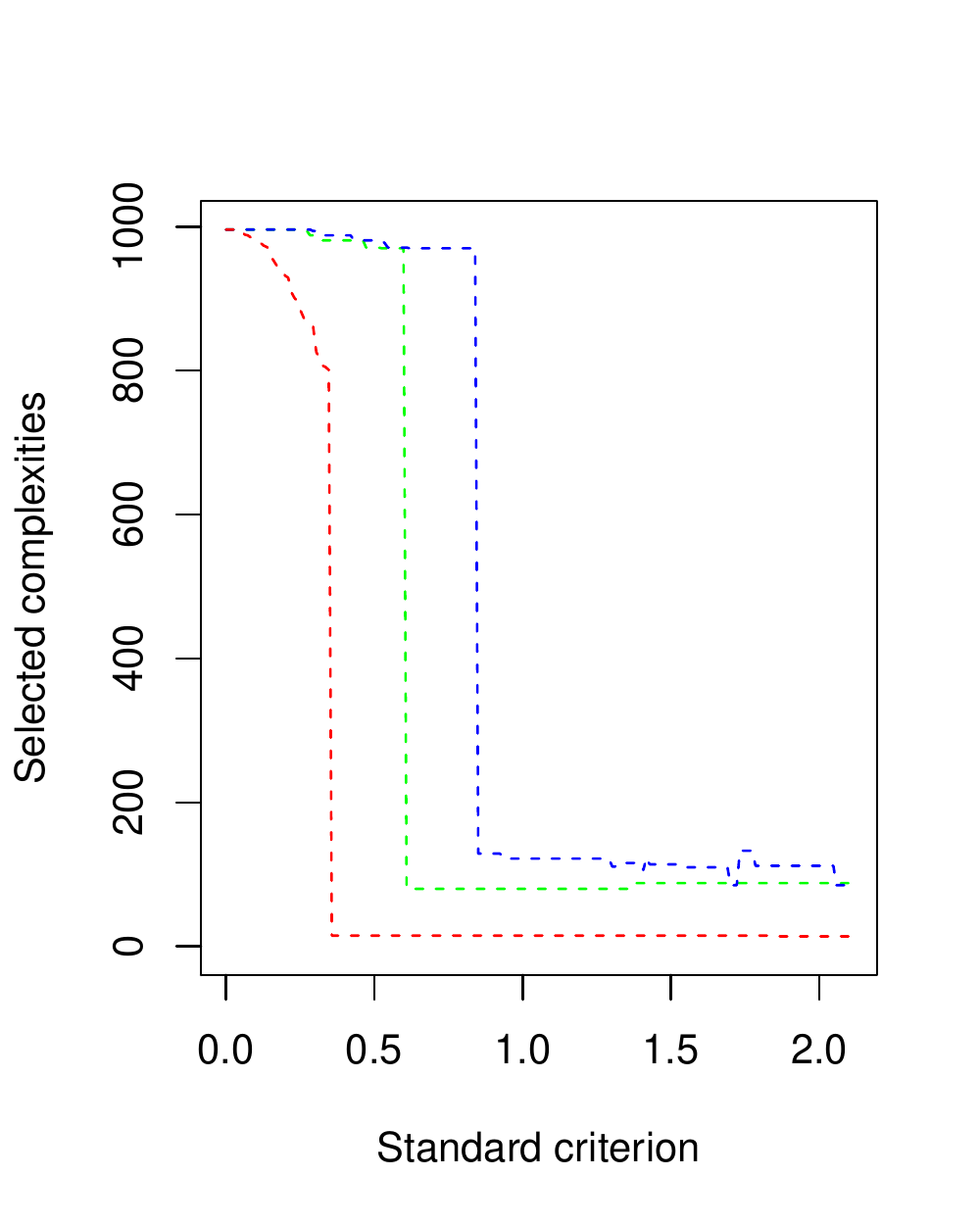} 
\includegraphics[scale=0.67]{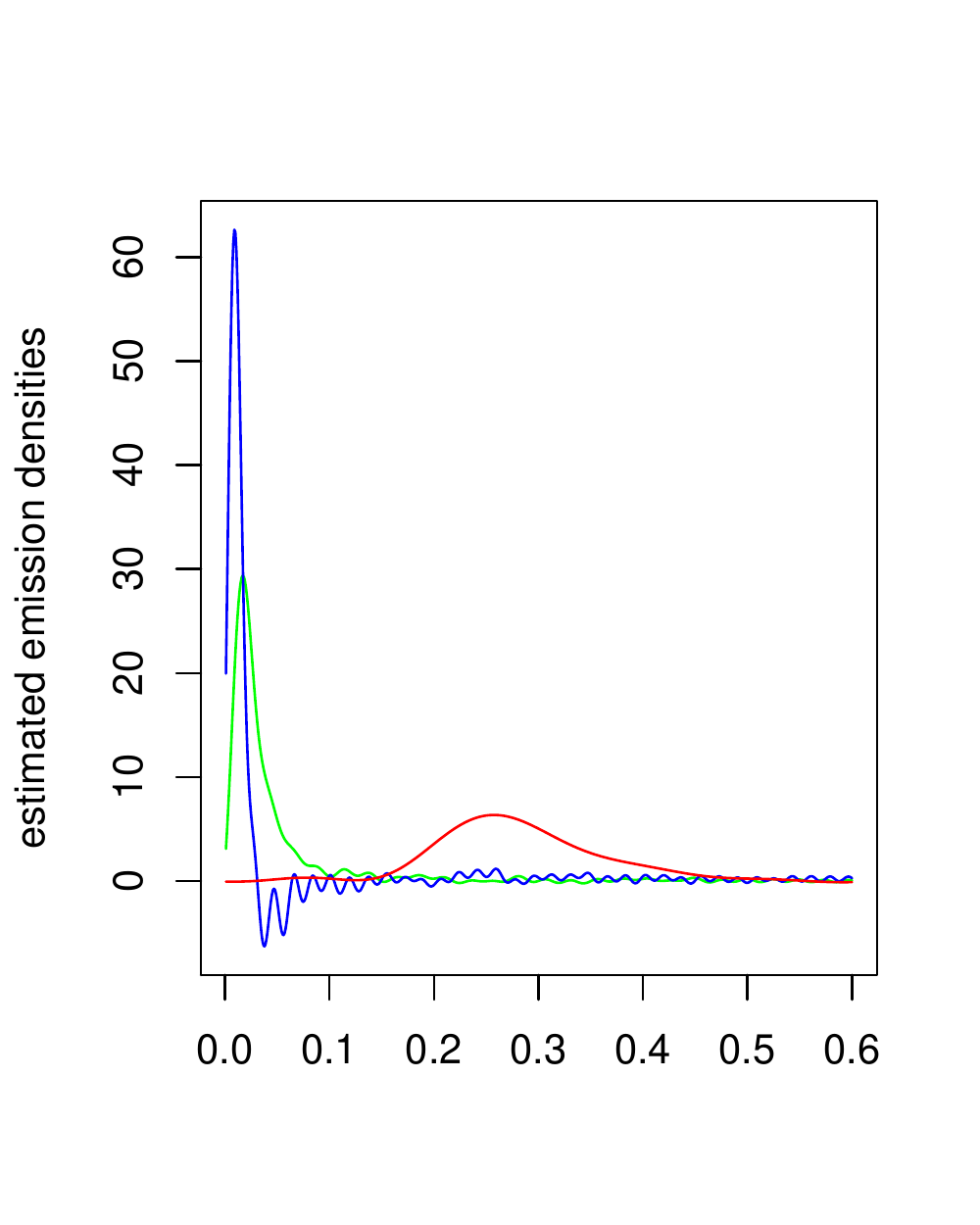}
\vspace{-1em}
\caption{Selected complexities and estimated densities for Cormorant d's trajectory ($n = 2,891$). Green = state 1, blue = state 2, red = state 3. The selected dimensions are $(80, 110, 15)$.}
\label{fig_appli_cormorant_d}
\end{figure}

Note that the use of the trigonometric basis allows the estimated densities to take negative values. This is not a problem as far as minimax rates of convergence (in $\Lbf^2$ norm) are concerned, however this can become an issue if one wants to use these densities in a forward-backward algorithm in order to get an estimator of the hidden states. One way to circumvent this problem is to use simplex projection to compute an approximation of the projection of these estimated density on the simplex of all probability densities. Note that since this is an $\Lbf^2$ projection on a convex set which contains the true densities, the projected densities have an even smaller error, thus keeping the minimax rate of convergence of the original estimators. The resulting densities are shown in Figure~\ref{fig_appli_cormorant_d_projSimplexe}
\begin{figure}[!t]
\centering
\vspace{-1em}
\includegraphics[scale=0.67]{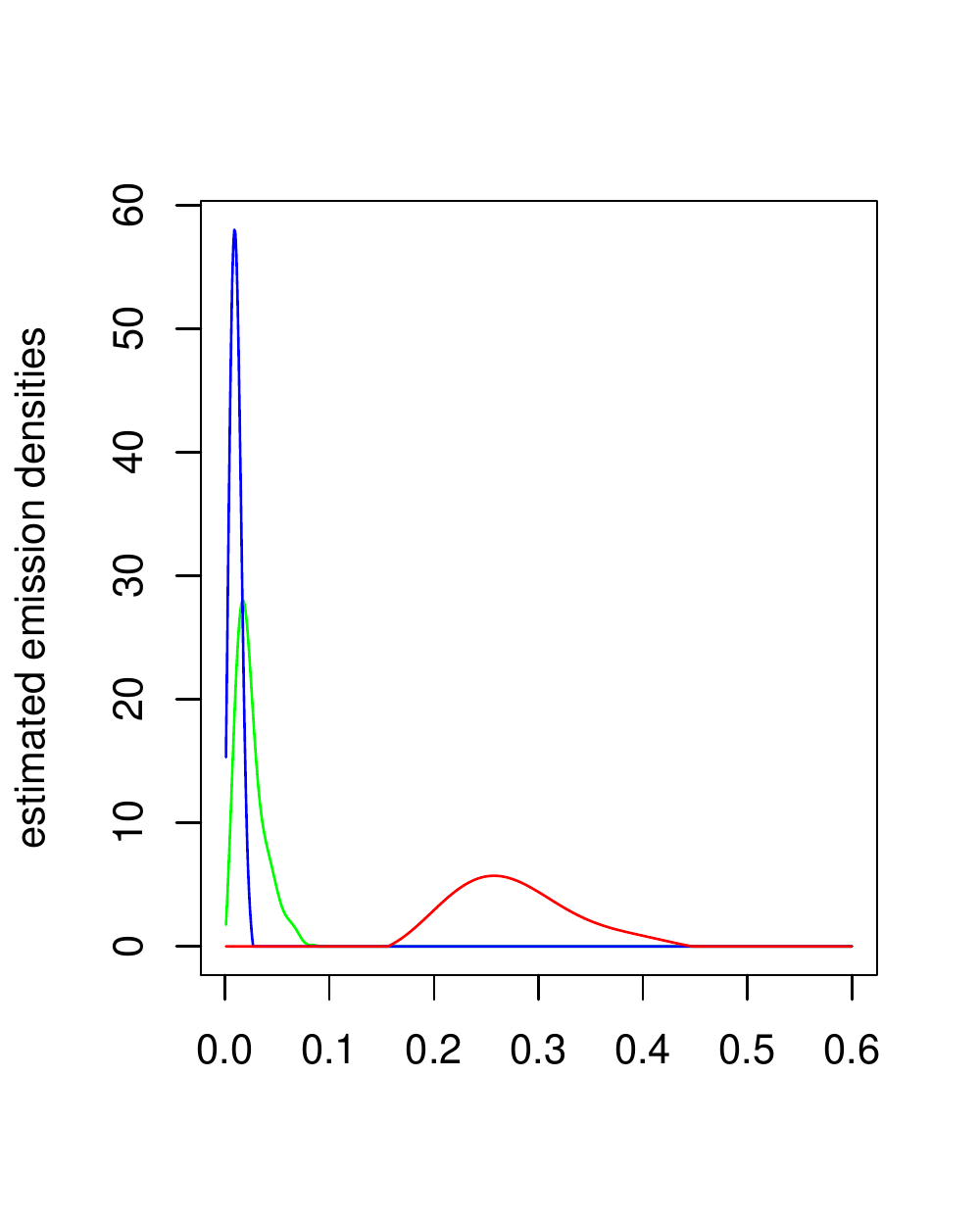}
\includegraphics[scale=0.67]{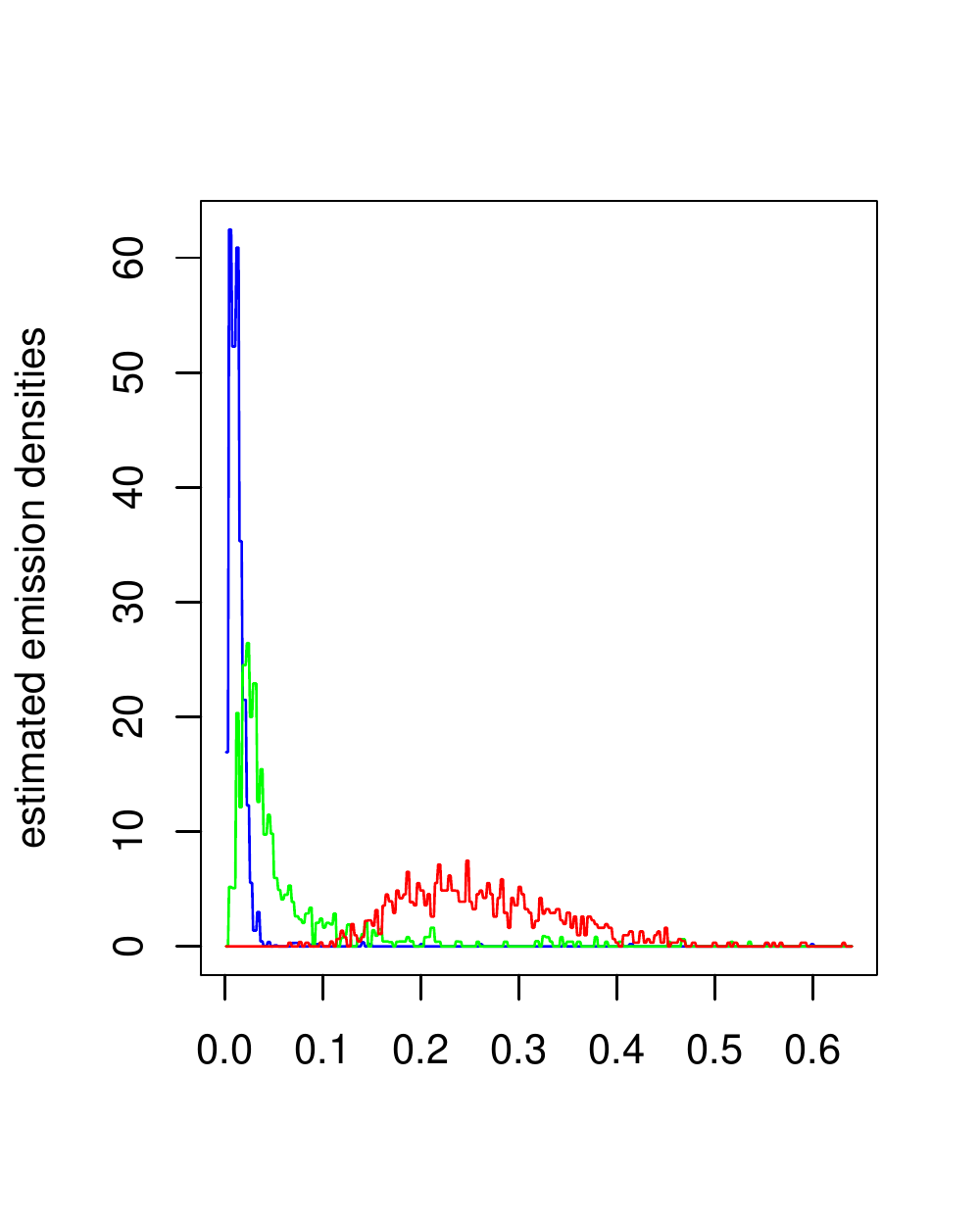}
\vspace{-1em}
\caption{Projection of the estimated densities of Figure~\ref{fig_appli_cormorant_d} for Cormorant d on the set of probability densities (left) and comparison with an estimation with histogram densities on a regular partition of size 300 using the EM algorithm (right).}
\label{fig_appli_cormorant_d_projSimplexe}
\end{figure}

The number of observations in this setting is even smaller than for the fishery's data set: our algorithm was able to recover three emission densities from less than 3,000 observations, despite the states being less stable than in the fishery data set: the diagonal terms of the estimated transition matrix using the EM algorithm are $(0.83, 0.93, 0.98)$. In addition, the result of our method is consistent with other estimation methods, as shown in Figure~\ref{fig_appli_cormorant_d_projSimplexe}: estimating the parameters with the EM algorithm using piecewise constant densities leads to a very similar result.

\section{Conclusion and perspectives}
\label{sec_conclusion}

We propose a state-by-state selection method to infer the emission densities of a HMM. Using a family of estimators, our method selects one estimator for each hidden state in a way that is adaptive with respect to this state's regularity. This method does not depend on the type of preliminary estimator, as long as a suitable variance bound is available. As such, it may be seen as a plug-in that takes a family of estimators and the corresponding variance bound and outputs the selected estimator. Note that its complexity does not depend on the number of observations used to compute the estimators, which makes it applicable to arbitrarily large data sets.

To apply this method, we present two families of estimators: a least squares estimator and a spectral estimator. For both, we prove a bound on their variance and show that this bound allows to recover the minimax rate of convergence separately on each hidden state, up to a logarithmic factor. The variance bounds are similar to a BIC penalty, with an additional logarithmic factor for the spectral estimators.

We carry out a numerical study of the method and some variants on simulated data. We use the spectral estimators, which are both fast and don't suffer from initialization issues, unlike the least squares and maximum likelihood estimators. The simulations show that our selection method is very fast compared to the computation of the estimators and that indeed, the final estimators reach the minimax rate of convergence on each state.

Then, we compare our method with a cross validation estimator based on a least square risk. This estimator only reaches the minimax rate corresponding to the worst regularity among the emission densities and fails to select models with small dimensions. It is still noteworthy that the cross validation returns relevant results for small sample sizes, whereas our method requires the sample size to be large enough to work properly. An interesting problem would be to investigate whether cross validation or other methods can be combined with our state-by-state selection method to give an algorithm that is both fast, stable for small sample sizes and optimal in the asymptotic setting.

Finally, we apply our algorithm to real trajectory data sets. On this data, our method proves that it is able to match the regularity of the underlying emission densities. In addition, it is able to produce sensible results with far fewer observations than in our simulation study.
\\

Our state-by-state selection method can be easily applied to multiview mixture models (also named mixture models with repeated measurement, see for instance \cite{bonhomme2016multiview} and \cite{gassiat2016multiview}). Let us first describe the model. A multiview mixture model consists of two random variables, a hidden state $U$ and an observation vector $\Ybf := (Y_i)_{i \in [m]}$ such that conditionally to $U$, the components $Y_i$ of $\Ybf$ are independent with a distribution depending only on $U$ and $i$. Let us assume that $U$ takes its values in a finite set $\Xcal$ of size $K$ and that the $Y_i$ have some density $f^*_{u,i}$ conditionally to $U = u$ with respect to a dominating measure.
A question of interest is to estimate the densities $f^*_{u,i}$ from a sequence of observed $(\Ybf_n)_{n \geq 1}$.

Our state-by-state selection method can be applied directly to such a model as long as estimators with a proper variance bound are available (see assumption \textbf{[H$(\epsilon)$]} in Section \ref{sec_Lepski_framework}). Indeed, we never use the dependency structure of the model. Regarding the development of preliminary estimators, multiview mixture models appear closely related to hidden Markov models: \cite{AHK12} and \cite{robin2014estimating} develop spectral methods that work for both multiview mixtures and HMMs at the same time using the same theoretical arguments. Thus, it seems clear that variance bounds such as the ones we developed can also be written for multiview mixture models.


\acks{I am grateful to Elisabeth Gassiat and Claire Lacour for their precious advice. I thank Augustin Touron for providing me with a R implementation of the spectral algorithm. I would also like to thank Marie-Pierre Etienne and of course Faustinato Behivoke (Institut Halieutiques et des Sciences Marines, Université de Toliara, Madagascar), Marc Léopold (IRD) and Sophie Bertrand (IRD) for letting me work on their data sets.
}

\appendix

\section{Spectral algorithm, full version}
\label{app_spectral}

\begin{algorithm}[!h]
  \caption{Spectral estimation of the emission densities of a HMM (full version)}
\label{alg:Spectral_complet}
  \SetAlgoLined
  \KwData{A sequence of observations $(Y_{1}, \dots ,Y_{n+2})$, two dimensions $m \leq M$, an orthonormal basis $(\varphi_1, \dots ,\varphi_M)$ and number of retries $r$.}
  \KwResult{Spectral estimators $(\hat f^{(M,r)}_k)_{k \in \Xcal}$, $\hat\Qbf$ and $\hat\pi$.}
    \BlankLine
\begin{enumerate}[{\bf [Step 1]}]
\item Consider the following empirical estimators: for any $a, c \in [m]$ and $b \in [M]$,
\begin{itemize}
\item $\hat{\Lbf}_{m}(a):= \frac{1}{n} \sum_{s=1}^{n}\varphi_{a}(Y_{s})$
\item $\hat{\Mbf}_{m,M,m}(a,b,c):= \frac{1}{n} \sum_{s=1}^{n}\varphi_{a}(Y_{s})\varphi_{b}(Y_{s+1})\varphi_{c}(Y_{s+2})$
\item $\hat{\Nbf}_{m,M}(a,b):= \frac{1}{n} \sum_{s=1}^{n}\varphi_{a}(Y_{s})\varphi_{b}( Y_{s+1})$
\item $\hat{\Pbf}_{m,m}(a,c):= \frac{1}{n} \sum_{s=1}^{n}\varphi_{a}(Y_{s})\varphi_{c}(Y_{s+2})$.
\end{itemize}

\item
Let $\hat\Ubf_m$ be the $m \times K$ matrix of orthonormal left singular vectors and $\hat\Vbf_M$ be the $M \times K$ matrix of orthonormal right singular vectors of $\hat{\Nbf}_{m,M}$ corresponding to its top $K$ singular values.

\item 
Form the matrices for all $b \in [M]$, $\hat\Bbf(b) := (\hat\Ubf_m^{\top} \hat{\Pbf}_{m,m} \hat\Ubf_m)^{-1}
\hat\Ubf_m^{\top} \hat{\Mbf}_{m,M,m} (\ldotp,b,\ldotp) \hat\Ubf_m$.

\item
Set $(\Theta_i)_{1 \leq i \leq r}$ $r$ i.i.d. $(K \times K)$ unitary matrix uniformly drawn. Form the matrices for all $k \in \Xcal$ and $i \in [r]$, ${\hat\Cbf}_i(k):=\sum_{b=1}^{M} (\hat\Vbf_M\Theta_i)(b,k) \hat\Bbf(b)$.

\item
Compute $\hat\Rbf_i$ a $(K\times K)$ unit Euclidean norm columns matrix that diagonalizes the matrix $\hat\Cbf_i(1)$: $\hat\Rbf_i^{-1} \hat\Cbf_i(1) \hat\Rbf_i = \Diag{(\hat\Lambda_i(1,1), \dots ,\hat\Lambda_i(1,K))}$.

\item
Set for all $k,k'\in \Xcal$, $\hat\Lambda_i(k,k') := (\hat\Rbf_i^{-1} \hat\Cbf_i(k) \hat\Rbf_i) (k',k')$. Choose $i_0$ maximizing $\min_k \min_{k_1 \neq k_2} |\hat\Lambda_i(k,k_1) - \hat\Lambda_i(k,k_2)|$ and set $\hat{\Obf} := \hat\Vbf_M \Theta_{i_0} \hat\Lambda_{i_0}$.

\item
Consider the emission densities estimators $\hat\fbf^{(M,r)} := (\hat f^{(M,r)}_k)_{k \in \Xcal}$ defined by for all $k \in \Xcal$, $\hat f^{(M,r)}_k := \sum_{b=1}^M \hat{\Obf}(b,k) \varphi_b$.

\item
Let $\hat{\Obf}_m$ be the $m \times K$ matrix containing the first $m$ rows of $\hat{\Obf}$. Set $\hat\pi = \Pi_\Delta \left( (\hat\Ubf_m^\top \hat\Obf_m)^{-1} \hat\Ubf_m^\top \hat\Lbf_m \right)$ where $\Pi_\Delta$ is the $\Lbf^2$ projection onto the probability simplex.

\item
Let $\hat{\Qbf}$ be the transition matrix defined by
$\hat{\Qbf} = \Pi_\text{TM}\left(
(\hat\Ubf_m^\top \hat\Obf_m \Diag[\hat\pi])^{-1}
\hat\Ubf_m^\top \hat\Nbf_{m,M} \hat\Vbf_M
(\hat\Obf^\top \hat\Vbf_M)^{-1}
\right)$ \\
where $\Pi_\text{TM}$ is the projection onto the set of transition matrices. This projection is obtained by projecting each line of the matrix onto the probability simplex.

\end{enumerate}
\end{algorithm}

\newpage

\section{Proofs}
\label{sec_proofs}

\subsection{Proof of Lemma \ref{lemma_permutation_unique}}
\label{sec_preuve_lemma_permutation}

Let $\tau_{n,M}$ be the permutation that minimizes $\displaystyle \tau \longmapsto \max_{k \in \Xcal} \left\| \hat{f}^{(M)}_k - f^{*,(M)}_{\tau(k)} \right\|_2$. \textbf{[H$(\epsilon)$]} means that with probability $1-\epsilon$, one has $\displaystyle \max_{k \in \Xcal} \left\| \hat{f}^{(M)}_k - f^{*,(M)}_{\tau(k)} \right\|_2 \leq \frac{\sigma(M)}{2}$. \\

Let $M \in \Mcal$. Let us show that
$\displaystyle \left\| \hat{f}^{(M)}_{\tau_{n,M}^{-1}(k')} - \hat{f}^{(M_0)}_{\tau_{n,M_0}^{-1}(k)} \right\|_2
	> \left\| \hat{f}^{(M)}_{\tau_{n,M}^{-1}(k)} - \hat{f}^{(M_0)}_{\tau_{n,M_0}^{-1}(k)} \right\|_2$
for all $k, k' \in \Xcal$ such that $k' \neq k$.
If this holds, then the definition of $\hat{\tau}^{(M)}$ implies that $\hat{\tau}^{(M)} = \tau_{n,M}^{-1} \circ \tau_{n,M_0}$. Thus, one has $\displaystyle \max_{k \in \Xcal} \left\| \hat{f}^{(M)}_{k, \text{new}} - f^{*,(M)}_{\tau_{n,M_0}(k)} \right\|_2 \leq \frac{\sigma(M)}{2}$, which is exactly Equation~(\ref{eq_Halt}) with $\tau_n = \tau_{n,M_0}$. \\

Applying the triangular inequality leads to
\begin{align*}
\left\| \hat{f}^{(M)}_{\tau_{n,M}^{-1}(k)} - \hat{f}^{(M_0)}_{\tau_{n,M_0}^{-1}(k)} \right\|_2
	\leq& \left\| \hat{f}^{(M)}_{\tau_{n,M}^{-1}(k)} - f^{*,(M)}_{k} \right\|_2
		+ \left\| f^{*,(M)}_{k} - f^{*,(M_0)}_{k} \right\|_2
		+ \left\| f^{*,(M_0)}_{k} - \hat{f}^{(M_0)}_{\tau_{n,M_0}^{-1}(k)} \right\|_2 \\
	\leq& \frac{\sigma(M)}{2}
		+ B_{M,M_0}
		+ \frac{\sigma(M_0)}{2}
\end{align*}
and
\begin{align*}
\left\| \hat{f}^{(M)}_{\tau_{n,M}^{-1}(k')} - \hat{f}^{(M_0)}_{\tau_{n,M_0}^{-1}(k)} \right\|_2
	\geq& \left\| f^{*,(M_0)}_{k'} - f^{*,(M_0)}_{k} \right\|_2
		- \left\| \hat{f}^{(M)}_{\tau_{n,M}^{-1}(k')} - f^{*,(M)}_{k'} \right\|_2 \\
		&- \left\| f^{*,(M)}_{k'} - f^{*,(M_0)}_{k'} \right\|_2
		- \left\| f^{*,(M_0)}_{k} - \hat{f}^{(M_0)}_{\tau_{n,M_0}^{-1}(k)} \right\|_2 \\
	\geq& m(\fbf^*, M_0) - \frac{\sigma(M)}{2} - B_{M,M_0} - \frac{\sigma(M_0)}{2}.
\end{align*}
Thus, the result holds as soon as $m(\fbf^*, M_0) - \frac{\sigma(M)}{2} - B_{M,M_0} - \frac{\sigma(M_0)}{2} > \frac{\sigma(M)}{2} + B_{M,M_0} + \frac{\sigma(M_0)}{2}$, which is the condition of Lemma \ref{lemma_permutation_unique}.

\subsection{Proof of Theorem \ref{th_garantieSpectral}}
\label{sec_preuve_spectral}

The structure of the proof is the same as the one of Theorem 3.1 of \cite{dCGLLC15}.

The first difference lies in the fact that we consider different models for each component of the tensors $\hat\Nbf_{m,M}$ and $\hat\Mbf_{m,M,m}$ in Step 1. As a consequence, we use the left \emph{and} right singular vectors of $\hat\Nbf_{m,M}$ instead of just the right singular vectors of $\hat\Pbf_{m,m}$. A careful reading shows that their proof can be adapted straightforwardly to this situation.

The second difference consists in generating several independant random unitary matrices in Step 4 and keeping the one that separates the eigenvalues of all $\hat\Cbf_i(k)$ best. This allows to replace Lemma F.6 of \cite{dCGLLC15} by the following one, based on the independence of the unitary matrices:
\begin{lemma}
For all $x > 0$ and $r \in \Nbb^*$,
\begin{equation*}
\Pbb\left[
	\forall k, k_1 \neq k_2, |\hat\Lambda_{i_0}(k,k_1) - \hat\Lambda_{i_0}(k,k_2)|
		\geq \frac{2 e^{-x/r} (1 - \epsilon_{\Nbf_{m,M}}^2)^{1/2}}{\sqrt{e} K^{5/2}(K-1)} \gamma(\Obf_M)
\right] \geq 1 - e^{-x}
\end{equation*}
and
\begin{equation*}
\Pbb\left[
	\| \hat\Lambda_{i_0} \|_\infty
		\geq \frac{1 + \sqrt{2} \sqrt{x + \log(K^2 r)}}{\sqrt{K}} \| \Obf_M \|_{2, \infty}
\right] \leq e^{-x},
\end{equation*}
The notations $\epsilon_{\Nbf_{m,M}}$ (or $\epsilon_{\Pbf_M}$ in the original proof), $\gamma(\Obf_M)$ et $\| \Obf_M \|_{2, \infty}$ are introduced in \cite{dCGLLC15}.
\end{lemma}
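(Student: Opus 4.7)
The plan is to reduce both inequalities to the single-attempt version, namely Lemma F.6 of \cite{dCGLLC15} applied to a single Haar-distributed unitary $\Theta_i$, and then to leverage the mutual independence of the $r$ attempts together with the selection rule for $i_0$.

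\textbf{First inequality (separation of eigenvalues).} For a single index $i \in [r]$, Lemma F.6 of \cite{dCGLLC15} gives, for all $y > 0$,
\begin{equation*}
\Pbb\!\left[\, \min_{k,\, k_1 \neq k_2} |\hat\Lambda_i(k,k_1) - \hat\Lambda_i(k,k_2)| \;\geq\; \tfrac{2 e^{-y}(1-\epsilon_{\Nbf_{m,M}}^2)^{1/2}}{\sqrt{e}\, K^{5/2}(K-1)}\gamma(\Obf_M) \,\right] \geq 1 - e^{-y}.
\end{equation*}
Apply this with $y = x/r$; since the $\Theta_i$ are i.i.d., the events are independent across $i$, so
\begin{equation*}
\Pbb\!\left[\, \forall i \in [r],\ \min_{k,\,k_1\neq k_2}|\hat\Lambda_i(k,k_1)-\hat\Lambda_i(k,k_2)| < \tfrac{2 e^{-x/r}(1-\epsilon_{\Nbf_{m,M}}^2)^{1/2}}{\sqrt{e}\, K^{5/2}(K-1)}\gamma(\Obf_M) \right] \leq (e^{-x/r})^r = e^{-x}.
\end{equation*}
By the selection rule defining $i_0$, the separation achieved by $\hat\Lambda_{i_0}$ equals the maximum over $i \in [r]$, so on the complementary event (probability $\geq 1-e^{-x}$) the required lower bound holds for $\hat\Lambda_{i_0}$.

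\textbf{Second inequality ($\ell_\infty$ bound).} For a single $i$, Lemma F.6 of \cite{dCGLLC15} also gives
\begin{equation*}
\Pbb\!\left[\, \| \hat\Lambda_i \|_\infty \geq \tfrac{1+\sqrt{2}\sqrt{y + \log K^2}}{\sqrt{K}}\|\Obf_M\|_{2,\infty} \right] \leq e^{-y}.
\end{equation*}
Apply this with $y = x + \log r$ and take a union bound over $i \in [r]$: the probability that any of the $r$ attempts produces a $\hat\Lambda_i$ violating the bound is at most $r\cdot e^{-(x+\log r)} = e^{-x}$. Since $i_0 \in [r]$, in particular
\begin{equation*}
\Pbb\!\left[\, \| \hat\Lambda_{i_0} \|_\infty \geq \tfrac{1+\sqrt{2}\sqrt{x + \log(K^2 r)}}{\sqrt{K}}\|\Obf_M\|_{2,\infty} \right] \leq e^{-x}.
\end{equation*}

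\textbf{Main obstacle.} There is essentially no obstacle beyond checking that the statement of Lemma F.6 of \cite{dCGLLC15} can be quoted verbatim in our setting: the change of models between left and right singular subspaces (replacing $\hat\Pbf_{m,m}$ by $\hat\Nbf_{m,M}$) only affects the definition of $\epsilon_{\Nbf_{m,M}}$ and the matrices $\hat\Bbf(b)$, but not the randomization step producing $\hat\Lambda_i$ from $\Theta_i$. The real content is the observation that, in the original proof, the exponent $e^{-x}$ in the separation bound came from the probability that a single Haar-random column of $\Theta$ aligns poorly; drawing $r$ independent columns and keeping the best one replaces this by $e^{-x/r}$ at essentially no cost, and costs only an extra $\log r$ in the $\ell_\infty$ bound by union bound.
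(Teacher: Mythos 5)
Your proposal is correct and is exactly the argument the paper intends: the paper's proof of this lemma consists only of the remark that it follows from Lemma F.6 of \cite{dCGLLC15} ``based on the independence of the unitary matrices,'' and your two steps (multiplying the $r$ conditional failure probabilities $e^{-x/r}$ for the separation bound, using that $i_0$ maximizes the separation; and a union bound with $y = x + \log r$ for the $\ell_\infty$ bound) are precisely the missing details. The only point worth making explicit is that the independence across $i$ is conditional on the observations, since the matrices $\hat\Bbf(b)$ entering each attempt are data-dependent while the $\Theta_i$ are drawn independently of the data.
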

Using this lemma, their proof leads to our result by taking $r = x = t$.

\subsection{Definition of the polynomial \texorpdfstring{$H$}{H}}
\label{sec_expression_H}

\subsubsection{Definition}

We parametrize the application
\begin{equation}
\label{eq_pre_forme_quadratique}
(\pi, \Qbf, \fbf) \in \Delta \times \Qcal \times \Span(\fbf^*)^K \longmapsto \|g^{\pi, \Qbf, \fbf} - g^{\pi^*, \Qbf^*, \fbf^*}\|_2^2
\end{equation}
in the following way. For $p \in \Rbb^{K-1}$, $q \in \Rbb^{K \times (K-1)}$ and $A \in \Rbb^{K \times (K-1)}$, define the extensions
\begin{itemize}[itemsep=1pt]
\item $\bar{p} \in \Rbb^K$ defined by $\bar{p}(k) = p(k)$ for all $k \in [K-1]$ and $\bar{p}(K) = - \sum_{k \in [K-1]} p(k)$;
\item $\bar{q} \in \Rbb^{K \times K}$ by $\bar{q}(k,K) = - \sum_{k' \in [K-1]} q(k,k')$;
\item $\bar{A} \in \Rbb^{K \times K}$ by $\bar{A}(k,K) = - \sum_{k' \in [K-1]} A(k,k')$.
\end{itemize}
$\bar{p}$ corresponds to $\pi - \pi^*$, $\bar{q}$ to $\Qbf - \Qbf^*$ and $A$ to the components of $\fbf - \fbf^*$ on $\fbf^*$ (which is a basis as soon as \textbf{[Hid]} holds). The condition on the last component of $\bar{p}$ and of each line of $\bar{q}$ and $\bar{A}$ follows from the fact that $\bar{p}$ corresponds to the difference of two probability vectors, $\bar{q}$ corresponds to the difference of two transition matrices and $\bar{A}$ correspond to the difference of two vectors of probability densities on a basis of probability densities.

Then, consider the quadratic form derived from the Taylor expansion of
\begin{align*}
(p,q,A) \in \Rbb^{K-1} \times \Rbb^{K \times (K-1)} \times \Rbb^{(K-1) \times K}
		\longmapsto \|g^{\pi^* + \bar{p}, \Qbf^* + \bar{q}, \fbf + \bar{A} \fbf^*} - g^{\pi^*, \Qbf^*, \fbf^*}\|_2^2.
\end{align*}
Let $M$ be the matrix associated to this quadratic form. We define $H$ as the determinant of $M$. Direct computations show that $H$ is a polynomial in the coefficients of $\pi^*$, $\Qbf^*$ and $G(\fbf^*)$.

\subsubsection{Link between \texorpdfstring{$H$}{H} and the quadratic form from Equation (\ref{eq_pre_forme_quadratique})}
\label{sec_lien_H_forme_quadratique}

The goal of this section is to show how $H$ can be used to lower bound the quadratic form from Equation (\ref{eq_pre_forme_quadratique}) by a positive constant times the distance between $(\pi, \Qbf, \fbf)$ and $(\pi^*, \Qbf^*, \fbf^*)$. We will not need the assumptions \textbf{[Hid]}, \textbf{[HF]} or \textbf{[Hdet]} unless specified otherwise.

Let us start by the relation between the norms of $(p,q,A)$ and $(\bar{p}, \bar{q}, \bar{A})$.
\begin{lemma}
For all $(p,q,A) \in \Rbb^{K-1} \times \Rbb^{K \times (K-1)} \times \Rbb^{(K-1) \times K}$,
\begin{align*}
\| p \|_2^2 \leq \| \bar{p} \|_2^2 \leq K \| p \|_2^2, \\
\| q \|_F^2 \leq \| \bar{q} \|_F^2 \leq K \| q \|_F^2 ,\\
\| A \|_F^2 \leq \| \bar{A} \|_F^2 \leq K \| A \|_F^2.
\end{align*}
\end{lemma}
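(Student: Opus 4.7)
The plan is straightforward: this is a routine Cauchy--Schwarz computation, broken into a lower bound (which is trivial) and an upper bound (one application of Cauchy--Schwarz, applied row-by-row for the matrix cases).

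First, for the lower bounds, I would simply observe that by construction $\bar p$ contains $p$ as its first $K-1$ coordinates, and likewise each row of $\bar q$ (resp. $\bar A$) contains the corresponding row of $q$ (resp. $A$) as its first $K-1$ entries. Hence
\begin{equation*}
\|\bar p\|_2^2 = \|p\|_2^2 + \bar p(K)^2 \geq \|p\|_2^2,
\end{equation*}
and similarly $\|\bar q\|_F^2 \geq \|q\|_F^2$ and $\|\bar A\|_F^2 \geq \|A\|_F^2$.

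For the upper bound on $\bar p$, I would compute $\bar p(K)^2 = \bigl(\sum_{k=1}^{K-1} p(k)\bigr)^2$ and apply Cauchy--Schwarz to get $\bar p(K)^2 \leq (K-1)\|p\|_2^2$, whence
\begin{equation*}
\|\bar p\|_2^2 = \|p\|_2^2 + \bar p(K)^2 \leq \|p\|_2^2 + (K-1)\|p\|_2^2 = K\|p\|_2^2.
\end{equation*}
For the matrix cases, I would apply exactly the same bound row-by-row: for each $k \in [K]$, $\bar q(k,K)^2 \leq (K-1) \sum_{k'=1}^{K-1} q(k,k')^2$, so that summing on $k$ gives $\sum_k \bar q(k,K)^2 \leq (K-1)\|q\|_F^2$ and hence $\|\bar q\|_F^2 \leq K\|q\|_F^2$. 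The argument for $\bar A$ is identical.

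There is no real obstacle here; the statement is purely linear-algebraic and does not invoke any of the probabilistic structure of the paper. The only thing to keep straight is the orientation of the matrices: for $\bar q$ the added column is indexed by the second argument (since the rows are the probability distributions summing to $1$ before subtracting $\Qbf^*$), and for $\bar A$ the paper defines the added column via $\bar A(k,K)$ as well, so in both matrix cases the Cauchy--Schwarz step is applied to the rows of $q$ and $A$ respectively.
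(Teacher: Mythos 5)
Your proof is correct and follows essentially the same route as the paper: the lower bound is immediate from the fact that $\bar p$ (resp. each row of $\bar q$, $\bar A$) extends $p$ by one coordinate, and the upper bound uses $\bigl(\sum_{k=1}^{K-1} p(k)\bigr)^2 \leq (K-1)\|p\|_2^2$, applied row-by-row in the matrix cases. Your remark about the orientation of the rows is a sensible clarification but does not change the argument.
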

\begin{proof}
$\| p \|_2^2 \leq \| \bar{p} \|_2^2$ is immediate. Then,
\begin{align*}
\| \bar{p} \|_2^2 =& \| p \|_2^2 + \left(\sum_{k \in [K-1]} p(k) \right)^2 \\
	\leq& \| p \|_2^2 + (K-1) \sum_{k \in [K-1]} p(k)^2 \\
	=& K \| p \|_2^2.
\end{align*}
The proof is the same for $q$ and $A$.
\end{proof}

\noindent
The next lemma will be used to link the norms of $A$ and $A\fbf$.
\begin{lemma}
For all $\bar{A} \in \Rbb^{K \times K}$ and $\fbf^* \in (\Lbf^2(\Ycal, \mu))^K$,
\begin{equation*}
\sigma_K(G(\fbf^*)) \| \bar{A} \|_F^2
	\leq \sum_{k \in \Xcal} \| (\bar{A}\fbf^*)_k \|_2^2 \leq K \| G(\fbf^*) \|_\infty \| \bar{A} \|_F^2
\end{equation*}
\end{lemma}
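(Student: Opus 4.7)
The proof is a routine computation once one expands the squared norm in terms of the Gram matrix, so the plan is mostly bookkeeping.

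First, I would write each component of $\bar A \fbf^*$ explicitly: denoting the rows of $\bar A$ by $a_k = (\bar A(k,1), \dots, \bar A(k,K)) \in \Rbb^K$, one has $(\bar A \fbf^*)_k = \sum_{j=1}^K a_k(j) f^*_j$, hence
\begin{equation*}
\| (\bar A \fbf^*)_k \|_2^2 = \sum_{i,j=1}^K a_k(i) a_k(j) \langle f^*_i, f^*_j \rangle = a_k^\top G(\fbf^*) a_k.
\end{equation*}
Summing over $k$ gives $\sum_{k \in \Xcal} \| (\bar A \fbf^*)_k \|_2^2 = \sum_k a_k^\top G(\fbf^*) a_k$, which one can also recognize as $\operatorname{tr}(\bar A\, G(\fbf^*)\, \bar A^\top)$.

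For the lower bound, I would simply use that $G(\fbf^*)$ is a Gram matrix, hence symmetric positive semidefinite, so its smallest eigenvalue equals $\sigma_K(G(\fbf^*))$ and $a_k^\top G(\fbf^*) a_k \geq \sigma_K(G(\fbf^*)) \, \|a_k\|_2^2$. Summing over $k$ and using $\sum_k \|a_k\|_2^2 = \|\bar A\|_F^2$ gives the left-hand inequality.

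For the upper bound, taking $\|G(\fbf^*)\|_\infty$ to be the elementwise sup norm (as is standard in this paper), I would bound
\begin{equation*}
a_k^\top G(\fbf^*) a_k = \sum_{i,j} a_k(i) a_k(j) G(\fbf^*)_{i,j} \leq \|G(\fbf^*)\|_\infty \Bigl(\sum_i |a_k(i)|\Bigr)^2 \leq K \|G(\fbf^*)\|_\infty \|a_k\|_2^2,
\end{equation*}
where the last step is Cauchy--Schwarz. Summing over $k$ gives the right-hand inequality. There is no real obstacle here; the only thing to be careful about is the convention for $\|G(\fbf^*)\|_\infty$, which must be the elementwise sup norm for the factor $K$ on the right-hand side to appear naturally (otherwise with the operator $\infty$-norm the $K$ would be unnecessary).
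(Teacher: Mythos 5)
Your proof is correct and follows essentially the same route as the paper: expand $\| (\bar A \fbf^*)_k \|_2^2$ as the quadratic form $a_k^\top G(\fbf^*) a_k$, bound below by the smallest eigenvalue, and bound above via Cauchy--Schwarz with the elementwise sup norm of the Gram matrix (the paper applies Cauchy--Schwarz pointwise before integrating and ends with $\sup_j G(\fbf^*)_{jj}$, which coincides with the elementwise sup norm since $G(\fbf^*)$ is positive semidefinite). Your remark about the convention for $\|G(\fbf^*)\|_\infty$ is apt and matches the paper's usage.
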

\begin{proof}
For the first inequality, we use that for all $k \in \Xcal$,
\begin{align*}
\| (\bar{A}\fbf^*)_k \|_2^2
	=& \bar{A}(k,\cdot) G(\fbf^*) \bar{A}(k,\cdot)^\top \\
	\geq& \sigma_K(G(\fbf^*)) \| \bar{A}(k,\cdot) \|_2^2
\end{align*}
and the inequality follows by summing over $k$.

For the second inequality,
\begin{align*}
\sum_{k \in \Xcal} \| (\bar{A}\fbf^*)_k \|_2^2
	=& \sum_{k \in [K]} \int (\bar{A}\fbf^*)_k(x)^2 \mu(dx) \\
	=& \sum_{k \in [K]} \int \left(\sum_{j \in [K]} \bar{A}(k,j) f^*_j(x) \right)^2 \mu(dx) \\
	\leq& \sum_{k \in [K]} \int K \sum_{j \in [K]} \bar{A}(k,j)^2 (f^*_j)^2(x) \mu(dx) \\
	\leq& K \left(\sum_{k,j \in [K]} \bar{A}(k,j)^2\right) \sup_{j \in \Xcal} \int (f^*_j)^2(x) \mu(dx) \\
	=& K \| \bar{A} \|_F^2 \| G(\fbf^*) \|_\infty.
\end{align*}
\end{proof}

\noindent
Finally, we will use the following result of \cite{lehericy2016order} (Section B.2) in order to upper bound the spectrum of the matrix $M$.
\begin{lemma}
\label{lemma_upperbound_diff_g}
For all $\pi_1, \pi_2 \in \Delta$, for all $\Qbf_1, \Qbf_2 \in \Qcal$ and for all $\fbf_1, \fbf_2 \in (\Lbf^2(\Ycal, \mu))^K$,
\begin{equation*}
\| g^{\pi_1, \Qbf_1, \fbf_1} - g^{\pi_2, \Qbf_2,\fbf_2} \|_2
	\leq
	\sqrt{3 K (\| G(\fbf_1) \|_\infty^3 \vee \| G(\fbf_2) \|_\infty^3)} \dperm ((\pi_1, \Qbf_1,\fbf_1), (\pi_2, \Qbf_2,\fbf_2))
\end{equation*}
\end{lemma}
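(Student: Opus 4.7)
The plan is a telescopic-decomposition argument.

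First, I would use the permutation invariance of $g$ under simultaneous relabeling of the hidden states, namely $g^{\pi, \Qbf, \fbf} = g^{\Pbb_\tau \pi, \Pbb_\tau \Qbf \Pbb_\tau^\top, \fbf \circ \tau^{-1}}$ for every $\tau \in \Sfrak(K)$, together with the fact that $\dperm$ is defined as an infimum over permutations, to reduce to the case where the optimal permutation is the identity. In this case
\[
\dperm((\pi_1, \Qbf_1, \fbf_1), (\pi_2, \Qbf_2, \fbf_2))^2 = \|\pi_1 - \pi_2\|_2^2 + \|\Qbf_1 - \Qbf_2\|_F^2 + \sum_{k \in \Xcal} \|f_{1,k} - f_{2,k}\|_2^2.
\]

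Next, I would telescope
\[
g^{\pi_1, \Qbf_1, \fbf_1} - g^{\pi_2, \Qbf_2, \fbf_2} = \Delta_\pi + \Delta_\Qbf + \Delta_\fbf,
\]
where $\Delta_\pi$, $\Delta_\Qbf$, $\Delta_\fbf$ change only $\pi$, $\Qbf$, $\fbf$ respectively, with the other two parameters fixed to $(\Qbf_1, \fbf_1)$, $(\pi_2, \fbf_1)$ and $(\pi_2, \Qbf_2)$. For $\Delta_\pi$, write it as $\sum_{k_1}(\pi_1 - \pi_2)(k_1) \, f_{1,k_1}(y_1) h_{k_1}(y_2, y_3)$ with $h_{k_1}(y_2, y_3) := \sum_{k_2, k_3} \Qbf_1(k_1, k_2) \Qbf_1(k_2, k_3) f_{1,k_2}(y_2) f_{1,k_3}(y_3)$; pointwise Cauchy--Schwarz in $k_1$ gives $|\Delta_\pi(y)|^2 \leq \|\pi_1 - \pi_2\|_2^2 \sum_{k_1} f_{1,k_1}(y_1)^2 h_{k_1}(y_2, y_3)^2$. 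Integration combined with the bounds $\|f_{1,k}\|_2^2 \leq \|G(\fbf_1)\|_\infty$ (for a PSD matrix, the maximal entry sits on the diagonal) and $\|h_{k_1}\|_2^2 \leq \|G(\fbf_1)\|_\infty^2$ (using that the rows of $\Qbf_1$ sum to $1$) yields $\|\Delta_\pi\|_2 \leq \sqrt{K \|G(\fbf_1)\|_\infty^3} \, \|\pi_1 - \pi_2\|_2$. For $\Delta_\Qbf$ I would use the identity
\[
\Qbf_1(k_1, k_2) \Qbf_1(k_2, k_3) - \Qbf_2(k_1, k_2) \Qbf_2(k_2, k_3) = (\Qbf_1 - \Qbf_2)(k_1, k_2) \Qbf_1(k_2, k_3) + \Qbf_2(k_1, k_2) (\Qbf_1 - \Qbf_2)(k_2, k_3)
\]
to split it into two sub-terms, each handled by Cauchy--Schwarz in the pair of indices carrying the difference $\Qbf_1 - \Qbf_2$, yielding a bound proportional to $\sqrt{\|G(\fbf_1)\|_\infty^3} \, \|\Qbf_1 - \Qbf_2\|_F$.

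For $\Delta_\fbf$, I would telescope the three-fold product,
\[
\prod_{i=1}^3 f_{1, k_i}(y_i) - \prod_{i=1}^3 f_{2, k_i}(y_i) = \sum_{j=1}^3 \Bigl(\prod_{i < j} f_{2, k_i}(y_i)\Bigr) (f_{1, k_j} - f_{2, k_j})(y_j) \Bigl(\prod_{i > j} f_{1, k_i}(y_i)\Bigr),
\]
producing three sub-terms whose spare factors come from \emph{both} $\fbf_1$ and $\fbf_2$. This is precisely what forces the maximum $\|G(\fbf_1)\|_\infty^3 \vee \|G(\fbf_2)\|_\infty^3$ to appear in the final bound. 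Treating each sub-term by Cauchy--Schwarz in $k_j$, and absorbing the transition-matrix factors via the row-stochastic identity, then gives $\|\Delta_\fbf\|_2 \leq C \sqrt{\|G(\fbf_1)\|_\infty^3 \vee \|G(\fbf_2)\|_\infty^3} \, \sqrt{\sum_k \|f_{1,k} - f_{2,k}\|_2^2}$ for a mild constant $C$. Combining the three bounds by the triangle inequality and a single Cauchy--Schwarz $(a + b + c)^2 \leq 3(a^2 + b^2 + c^2)$ yields the claimed bound $\sqrt{3K(\|G(\fbf_1)\|_\infty^3 \vee \|G(\fbf_2)\|_\infty^3)} \, \dperm$.

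The main obstacle is constant tracking: the $\Qbf$-telescoping produces two sub-terms and the $\fbf$-telescoping three, so a careless triangle inequality on the sub-terms would inflate the final constant well beyond $3K$. The fix is to combine the sub-terms pointwise before taking $\Lbf^2$-norms — or equivalently, apply one joint Cauchy--Schwarz over the full relevant index set — so that each sub-telescoping contributes only a factor of order one. The row-stochasticity of the transition matrices, together with the uniform entrywise bounds on $G(\fbf_1)$ and $G(\fbf_2)$, are exactly what make such a joint Cauchy--Schwarz step go through cleanly.
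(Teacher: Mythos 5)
Your strategy---reduce to the identity permutation, telescope in $\pi$, then $\Qbf$, then $\fbf$, and control each piece by Cauchy--Schwarz against the Gram matrix, using row-stochasticity to absorb the transition matrices---is the natural one, and it is essentially what the cited source does; the paper itself gives no proof of this lemma, it imports it from \cite{lehericy2016order} (Section B.2). Your treatment of $\Delta_\pi$ is correct and does give $\|\Delta_\pi\|_2 \leq \sqrt{K \|G(\fbf_1)\|_\infty^3}\,\|\pi_1-\pi_2\|_2$.

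The gap is in the last step, where you claim the multiplicities from the $\Qbf$- and $\fbf$-telescopings (two and three sub-terms respectively) can be reabsorbed ``at order one'' by a single joint Cauchy--Schwarz. They cannot: the factor $3$ produced by telescoping the triple product is genuinely attained, not an artifact of a lossy triangle inequality. Take $K=1$ (so $\pi$ and $\Qbf$ are trivial), $f_2 = f$ with $\|f\|_2 = 1$ and $f_1 = (1+\epsilon) f$: then $\|g^{\fbf_1} - g^{\fbf_2}\|_2 = (1+\epsilon)^3 - 1 \sim 3\epsilon$, while $\sqrt{3K(\|G(\fbf_1)\|_\infty^3 \vee \|G(\fbf_2)\|_\infty^3)}\,\dperm \sim \sqrt{3}\,\epsilon$. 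So no rearrangement of the Cauchy--Schwarz steps can land on the constant $\sqrt{3K(\cdot)}$---indeed this example shows the displayed inequality is itself not valid with that constant for arbitrary $\fbf_i \in (\Lbf^2(\Ycal,\mu))^K$, and the constant must be enlarged. What your decomposition honestly delivers is $\|\Delta_\pi\|_2 + \|\Delta_\Qbf\|_2 + \|\Delta_\fbf\|_2 \leq (a + 2b + 3c)\sqrt{K(\|G(\fbf_1)\|_\infty^3 \vee \|G(\fbf_2)\|_\infty^3)}$ with $a, b, c$ the three coordinates of $\dperm$, hence a bound with $\sqrt{14K(\cdot)}$ (or $3\sqrt{K(\cdot)}$ under the convention that each block is bounded by its worst multiplier). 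That weaker constant is all the paper ever uses downstream---only the order of magnitude in $K$ and $\|G\|_\infty$ matters in the proofs of Theorem \ref{th_minoration_quad} and Lemma \ref{lemma_N_is_unifC0}---so your argument proves a perfectly serviceable version of the lemma, but you should state it with the corrected constant rather than assert that the joint Cauchy--Schwarz recovers $\sqrt{3K}$.
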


\noindent
Together, these results imply that for all $(p,q,A)$,
\begin{align*}
\|g^{\pi^* + \bar{p}, \Qbf^* + \bar{q}, \fbf^* + \bar{A} \fbf^*} -& g^{\pi^*, \Qbf^*, \fbf^*}\|_2^2 \\
	\leq& 3 K (\| G(\fbf^* + \bar{A} \fbf^*) \|_\infty^3 \vee \| G(\fbf^*) \|_\infty^3) (\| \bar{p} \|_2^2 + \| \bar{q} \|_F^2 + \sum_{k \in \Xcal} \| (\bar{A}\fbf)_k \|_F^2)\\
	\leq& 3 K \| G(\fbf^*) \|_\infty^3 (1 + K^2 \| A \|_F^2)^3 (K \| p \|_2^2 + K \| q \|_F^2 + K^2 \| G(\fbf^*) \|_\infty \| A \|_F^2)
\end{align*}
so that $\sigma_1(M) \leq \sqrt{3 K^3} (1 \vee \| G(\fbf) \|_\infty^2)$. Since $H = \prod_{i=1}^{(K-1)(2K+1)} \sigma_i(M)$, one has
\begin{equation*}
\sigma_{(K-1)(2K+1)}(M) \geq \frac{H}{(3 K^3 (1 \vee \| G(\fbf) \|_\infty^4))^{K^2-K/2}}.
\end{equation*}
Now, assume that \textbf{[Hid]} holds, so that $\sigma_K(G(\fbf^*)) > 0$, then
\begin{align*}
\|g^{\pi^* + \bar{p}, \Qbf^* + \bar{q}, \fbf^* + \bar{A} \fbf^*} - g^{\pi^*, \Qbf^*, \fbf^*}\|_2^2
	&\geq \sigma_{(K-1)(2K+1)}(M) (\| p \|_2^2 + \| q \|_F^2 + \| A \|_F^2) \\
		&\quad + o(\| p \|_2^2 + \| q \|_F^2 + \| A \|_F^2) \\
	&\geq \frac{\sigma_{(K-1)(2K+1)}(M)}{1 \wedge K \| G(\fbf^*) \|_\infty} \left(\| \bar{p} \|_2^2 + \| \bar{q} \|_F^2 +  \sum_{k \in \Xcal} \| (\bar{A}\fbf^*)_k \|_F^2 \right) \\
		&\quad + o \left( \frac{1}{1 \wedge \sigma_K(G(\fbf^*))} \left( \| \bar{p} \|_2^2 + \| \bar{q} \|_F^2 + \sum_{k \in \Xcal} \| (\bar{A}\fbf^*)_k \|_F^2 \right) \right)
\end{align*}
and finally
\begin{multline}
\label{eq_minoration_quadratique_avec_H}
\|g^{\pi^* + \bar{p}, \Qbf^* + \bar{q}, \fbf^* + \bar{A} \fbf^*} - g^{\pi^*, \Qbf^*, \fbf^*}\|_2^2 \\
	\geq c_2(\pi^*, \Qbf^*, \fbf^*) \left(\| \bar{p} \|_2^2 + \| \bar{q} \|_F^2 +  \sum_{k \in \Xcal} \| (\bar{A}\fbf^*)_k \|_F^2 \right) \\
		+ o \left( \| \bar{p} \|_2^2 + \| \bar{q} \|_F^2 + \sum_{k \in \Xcal} \| (\bar{A}\fbf^*)_k \|_F^2 \right)
\end{multline}
where
\begin{equation*}
c_2(\pi^*, \Qbf^*, \fbf^*) = \frac{H}{(1 \wedge K \| G(\fbf^*) \|_\infty) (3 K^3 (1 \vee \| G(\fbf^*) \|_\infty^4))^{K^2-K/2}}
\end{equation*}
is positive as soon as \textbf{[Hid]} and \textbf{[Hdet]} hold.

\subsection{Proof of Theorem \ref{th_minoration_quad}}
\label{sec_proof_minoration_quad}

Let
\begin{equation*}
N_\fbf(p, q, \hbf) = \|g^{\pi^* + p, \Qbf^* + q, \fbf + \hbf} - g^{\pi^*, \Qbf^*, \fbf}\|_2^2
\end{equation*}
and
\begin{equation*}
\| (p,q,\hbf) \|_\fbf^2 = \dperm((\pi^* + p, \Qbf^* + q, \fbf + \hbf),(\pi^*, \Qbf^*, \fbf))^2.
\end{equation*}
We want to show that there exists a constant $c^* > 0$ such that there exists a neighborhood $\Vcal$ of $\fbf^*$ such that if one writes
\begin{equation*}
c_\fbf := \inf_{p \in (\Delta - \Delta), \; q \in (\Qcal - \Qcal), \; \hbf \in (\Fcal - \Fcal)^K} \frac{N_\fbf(p, q, \hbf)}{\| (p,q,\hbf) \|_\fbf^2 }
\end{equation*}
then $\inf_{\fbf \in \Vcal} c_\fbf \geq c^*$.

The proof follows the structure of the proof of Theorem 6 of \cite{dCGL15}. It consists of three steps: the first one controls the component of $\hbf$ that is orthogonal to $\fbf$. This makes it possible to restrict $\hbf$ to the finite-dimensional space spanned by $\fbf$ in the two other parts. The second step controls the case when $\hbf$ is small, so that the behaviour of $N_\fbf$ is given by its quadratic form, and the last step controls the case where $\hbf$ is far from zero.

\subsubsection{The orthogonal part.}
Let $\ubf$ be the orthogonal projection of $\hbf$ on $\Span(\fbf)$. Then
\begin{equation*}
N_\fbf(p, q, \hbf) = N_\fbf(p, q, \ubf) + M_\fbf(p, q, \ubf, \hbf - \ubf)
\end{equation*}
where
\begin{multline*}
M_\fbf(p, q, \ubf, \abf)
	= \sum_{i_1, j_1, k_1} \sum_{i_2, j_2, k_2}
		(\pi^*+p)(i_1) (\Qbf^*+q)(i_1, j_1) (\Qbf^*+q)(j_1, k_1) \\
		(\pi^*+p)(i_2) (\Qbf^*+q)(i_2, j_2) (\Qbf^*+q)(j_2, k_2) \\
		\Big(
			\langle a_{i_1}, a_{i_2} \rangle \langle (f+u)_{j_1}, (f+u)_{j_2} \rangle \langle (f+u)_{k_1}, (f+u)_{k_2} \rangle \\
			+ \langle (f+u)_{i_1}, (f+u)_{i_2} \rangle \langle a_{j_1}, a_{j_2} \rangle \langle (f+u)_{k_1}, (f+u)_{k_2} \rangle \\
			+ \langle (f+u)_{i_1}, (f+u)_{i_2} \rangle \langle (f+u)_{j_1}, (f+u)_{j_2} \rangle \langle a_{k_1}, a_{k_2} \rangle \\
			+ \langle a_{i_1}, a_{i_2} \rangle \langle a_{j_1}, a_{j_2} \rangle \langle (f+u)_{k_1}, (f+u)_{k_2} \rangle \\
			+ \langle a_{i_1}, a_{i_2} \rangle \langle (f+u)_{j_1}, (f+u)_{j_2} \rangle \langle a_{k_1}, a_{k_2} \rangle \\
			+ \langle (f+u)_{i_1}, (f+u)_{i_2} \rangle \langle a_{j_1}, a_{j_2} \rangle \langle a_{k_1}, a_{k_2} \rangle \\
			+ \langle a_{i_1}, a_{i_2} \rangle \langle a_{j_1}, a_{j_2} \rangle \langle a_{k_1}, a_{k_2} \rangle
		\Big).
\end{multline*}
Let us write $\Pi'$ the matrix whose diagonal terms are the elements of $\pi^* + p$ and $\Qbf'$ the matrix $\Qbf^*+q$, then $M_\fbf$ can be written as
\begin{multline*}
M_\fbf(p, q, \ubf, \abf)
	= \sum_{i,j}
		\Big(
			((\Pi' \Qbf')^\top G(\abf) \Pi' \Qbf')_{i,j} G(\fbf+\ubf)_{i,j} (\Qbf'^\top G(\fbf+\ubf) \Qbf')_{i,j} \\
			+ ((\Pi' \Qbf')^\top G(\fbf+\ubf) \Pi' \Qbf')_{i,j} G(\abf)_{i,j}  (\Qbf'^\top G(\fbf+\ubf) \Qbf')_{i,j} \\
			+ ((\Pi' \Qbf')^\top G(\fbf+\ubf) \Pi' \Qbf')_{i,j} G(\fbf+\ubf)_{i,j} (\Qbf'^\top G(\abf) \Qbf')_{i,j} \\
			+ ((\Pi' \Qbf')^\top G(\abf) \Pi' \Qbf')_{i,j} G(\abf)_{i,j} (\Qbf'^\top G(\fbf+\ubf) \Qbf')_{i,j} \\
			+ ((\Pi' \Qbf')^\top G(\abf) \Pi' \Qbf')_{i,j} G(\fbf+\ubf)_{i,j} (\Qbf'^\top G(\abf) \Qbf')_{i,j} \\
			+ ((\Pi' \Qbf')^\top G(\fbf+\ubf) \Pi' \Qbf')_{i,j} G(\abf)_{i,j} (\Qbf'^\top G(\abf) \Qbf')_{i,j} \\
			+ ((\Pi' \Qbf')^\top G(\abf) \Pi' \Qbf')_{i,j} G(\abf)_{i,j} (\Qbf'^\top G(\abf) \Qbf')_{i,j}
		\Big).
\end{multline*}
By the Schur product theorem, these terms are nonnegative since they correspond to Hadamard products of three Gram matrices which are nonnegative. Thus, one can lower bound $M_\fbf(p, q, \ubf, \abf)$ by the second term of the sum, which leads to
\begin{align*}
M_\fbf(p, q, \ubf, \abf)
	\geq \sum_{i,j=1}^K
		((\Pi' \Qbf')^\top G(\fbf+\ubf) \Pi' \Qbf')_{i,j} (\Qbf'^\top G(\fbf+\ubf) \Qbf')_{i,j} \langle a_i, a_j \rangle 
\end{align*}
Assume \textbf{[Hid]} holds for the parameters $(\pi^*+p, \Qbf^*+q, \fbf+\ubf)$, then the matrices $(\Pi' \Qbf')^\top G(\fbf+\ubf) \Pi' \Qbf'$ and $\Qbf'^\top G(\fbf+\ubf) \Qbf'$ are positive symmetric with respective lowest eigenvalue lower bounded by $(\inf_k(\pi^*_k+p_k) \sigma_K(\Qbf^* + q))^2 \sigma_K(G(\fbf+\ubf))$ and $\sigma_K(\Qbf^* + q)^2 \sigma_K(G(\fbf+\ubf))$. Therefore, their Hadamard product is positive, and one has 
\begin{equation*}
(((\Pi' \Qbf')^\top G(\fbf+\ubf) \Pi' \Qbf')_{i,j} (\Qbf'^\top G(\fbf+\ubf) \Qbf')_{i,j})_{i,j}
	= (D \Ubf)^\top (D \Ubf)
\end{equation*}
with $\Ubf$ an orthogonal matrix and $D$ a diagonal matrix with positive diagonal coefficients. Moreover, the Schur product theorem implies that $\sigma_K(D)^2 \geq (\inf_k(\pi^*_k+p_k))^2 \sigma_K(\Qbf^* + q)^4 \sigma_K(G(\fbf+\ubf))^2$. Then
\begin{align*}
M_\fbf(p, q, \ubf, \abf)
	\geq& \sum_{i,j=1}^K ((D \Ubf)^\top (D \Ubf))_{i,j} \langle a_i, a_j \rangle \\
	=& \sum_{j=1}^K \| D \Ubf \abf \|_2^2 \\
	\geq& \sigma_K(D)^2 \| \Ubf \abf \|_2^2 \\
	\geq& (\inf_k(\pi^*_k+p_k))^2 \sigma_K(\Qbf^* + q)^4 \sigma_K(G(\fbf+\ubf))^2 \| \abf \|_2^2.
\end{align*}
Finally, let $c_1(\pi^* + p, \Qbf^*+q, \fbf+\ubf) = (\inf_k(\pi^*_k+p_k))^2 \sigma_K(\Qbf^* + q)^4 \sigma_K(G(\fbf+\ubf))^2$. The application $(p, \pi^*, q, \Qbf^*, \ubf, \fbf) \mapsto c_1(\pi^* + p, \Qbf^*+q, \fbf+\ubf)$ is continuous and nonnegative, it is positive when \textbf{[Hid]} holds for the parameters $(\pi^* + p, \Qbf^*+q, \fbf+\ubf)$, and one has
\begin{equation*}
M_\fbf(p, q, \ubf, \abf)
	\geq c_1(\pi^* + p, \Qbf^*+q, \fbf+\ubf) \| \abf \|_2^2.
\end{equation*}

We will now control the term $N_\fbf(p,q,\ubf)$. Two cases appear: when $(\pi^* + p, \Qbf^* + q ,\fbf + \ubf)$ is close to $(\pi^*, \Qbf^*, \fbf^*)$ in some sense and when it is not. The first case will be solved using the nondegeneracy of the quadratic form ensured by \textbf{[Hdet]}. The second case will be solved using the identifiability of the HMM.

\subsubsection{In the neighborhood of \texorpdfstring{$\fbf^*$}{f*}.}

The Taylor expansion of
\begin{align*}
(p,q,\ubf) \in (\Delta - \Delta) \times (\Qcal - \Qcal) \times ((\Fcal - \Fcal) \cap \Span(\fbf))^K \mapsto N_\fbf(p, q, \ubf)
\end{align*}
around $(0,0,0)$ leads to a nonnegative quadratic form and no linear part. \textbf{[Hdet]}, \textbf{[Hid]} and equation (\ref{eq_minoration_quadratique_avec_H}) ensure that this form is positive for $\fbf = \fbf^*$. Let $c_2(\Qbf^*, \pi^*, \fbf)$ be as defined in Section \ref{sec_lien_H_forme_quadratique}, then $\fbf \mapsto c_2(\Qbf^*, \pi^*, \fbf)$ is continuous and it is positive in the neighborhood of $\fbf^*$. Moreover, there exists a positive constant $\eta$ depending on $\|  G(\fbf) \|_\infty$ such that for all $(p,q,\ubf)$ such that $\| (p, q, \ubf) \|_\fbf \leq 1$, one has
\begin{align*}
N_\fbf(p, q, \ubf)
	\geq c_2(\Qbf^*, \pi^*, \fbf) \| (p, q, \ubf) \|_\fbf^2 - \eta \| (p, q, \ubf) \|_\fbf^3.
\end{align*}
For instance, $\eta = 4000 K^6 \| G(\fbf) \|_\infty^3$ works: the terms of order 2 or more in the Taylor expansion of $N_\fbf$ are the scalar product of sums of terms of the form $\sum_{i,j,k \in \Xcal} \pi^*(i) \Qbf^*(i,j) \Qbf^*(j,k) f_i \otimes f_j \otimes f_k$ where zero to three of the $f$ may be replaced by $u$, zero to two of the $\Qbf^*$ by $q$ and $\pi^*$ may be replaced by $p$ and at least one of them is replaced. There are 63 possibilities, which leads to a sum of $(63 K^3)^2$ terms, each of which can be bounded by $\| G(\fbf) \|_\infty^3 (\max \{ p(i), q(i,j), \|u_i\|_2 \; | \; i,j \in \Xcal \})^r$ where $r$ is the number of replaced terms. By taking the right permutation of states, the max can be bounded by $\| (p, q, \ubf) \|_\fbf$, hence the result.

Then, using $\| (p,q,\hbf) \|_\fbf^2 = \| (p,q,\ubf) \|_\fbf^2 + \| \abf \|_2^2$ leads to
\begin{align*}
\frac{N_\fbf(p, q, \hbf)}{\| (p, q, \hbf) \|_\fbf^2}
	\geq& c_1(\Qbf^*+q, \pi^*+p, \fbf+\ubf) \frac{\| \abf \|_2^2}{\| (p,q,\ubf) \|_\fbf^2 + \| \abf \|_2^2} \\
	&+ c_2(\Qbf^*, \pi^*, \fbf) \frac{\| (p, q, \ubf) \|_\fbf^2}{\| (p,q,\ubf) \|_\fbf^2 + \| \abf \|_2^2}
	- \eta \frac{\| (p, q, \ubf) \|_\fbf^2}{(\| (p,q,\ubf) \|_\fbf^2 + \| \abf \|_2^2)^{1/2}} \\
	\geq& c_1(\Qbf^*+q, \pi^*+p, \fbf+\ubf) \frac{\| \abf \|_2^2}{\| (p,q,\ubf) \|_\fbf^2 + \| \abf \|_2^2} \\
	&+ c_2(\Qbf^*, \pi^*, \fbf) \frac{\| (p, q, \ubf) \|_\fbf^2}{\| (p,q,\ubf) \|_\fbf^2 + \| \abf \|_2^2}
	- \eta \sqrt{\| (p, q, \ubf) \|_\fbf^2}
\end{align*}
Let $c_0 = \min(c_1/2, c_2)/2$, then $c_0$ is continuous and there exists a continuous function $(\pi^*, \Qbf^*, \fbf) \mapsto \epsilon(\pi^*, \Qbf^*, \fbf)$ which is positive as soon as \textbf{[Hid]} and \textbf{[Hdet]} hold for $(\pi^*, \Qbf^*, \fbf)$ and such that
\begin{equation*}
\| (p, q, \ubf) \|_\fbf \leq \epsilon(\pi^*, \Qbf^*, \fbf)
	\; \Rightarrow \; \frac{N_\fbf(p, q, \hbf)}{\| (p, q, \hbf) \|_\fbf^2} \geq c_0(\Qbf^*, \pi^*, \fbf).
\end{equation*}
Thus, there exists positive constants $\epsilon_0$ and $c_\text{near}$ depending on $\Qbf^*$, $\pi^*$ and $\fbf^*$ such that 
\begin{multline*}
\forall (p,q,\hbf,\fbf) \in (\Delta-\Delta) \times (\Qcal-\Qcal) \times (\Fcal - \Fcal)^K \times \Fcal^K \\
	\text{ s.t. } \| (p, q, \ubf) \|_\fbf \leq \epsilon_0 \text{ and } \sum_{k \in \Xcal} \| f_k - f^*_k \|_2^2 \leq \epsilon_0^2, \qquad
	\frac{N_\fbf(p, q, \hbf)}{\| (p, q, \hbf) \|_\fbf^2} \geq c_\text{near}.
\end{multline*}

\subsubsection{Far from \texorpdfstring{$\fbf^*$}{f*}.}

\begin{lemma}
\label{lemma_N_is_unifC0}
The application
\begin{align*}
(p,q,\ubf,\fbf) \in (\Delta-\Delta) \times (\Qcal-\Qcal) \times (\Fcal - \Fcal)^K \times \Fcal^K
	\longmapsto N_\fbf(p,q,\ubf)
\end{align*}
restricted to the set of $(p,q,\ubf,\fbf)$ such that $\ubf \in \Span(\fbf)^K$ is uniformly continuous for the norm $\| \cdot \|_\text{tot}$ defined by
\begin{align*}
\| (p,q,\ubf,\fbf) \|_\text{tot}^2
	:= \| p \|_2^2 + \| q \|_F^2 + \sum_{k \in \Xcal} \left( \| u_k \|_2^2 + \| f_k \|_2^2 \right).
\end{align*}
\end{lemma}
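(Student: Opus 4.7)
The approach is to exploit the multilinear-polynomial structure of $(\pi, \Qbf, \fbf) \mapsto g^{\pi, \Qbf, \fbf}$ together with the uniform bounds on $(p, q, \ubf, \fbf)$ provided by \textbf{[HF]} and the boundedness of $\Delta - \Delta$ and $\Qcal - \Qcal$. In fact I will prove the stronger claim that the map is globally Lipschitz on its domain, with constant depending only on $K$ and $\cquad$, which immediately yields uniform continuity. Writing $A_i := g^{\pi^* + p_i, \Qbf^* + q_i, \fbf_i + \ubf_i}$ and $B_i := g^{\pi^*, \Qbf^*, \fbf_i}$, so that $N_{\fbf_i}(p_i, q_i, \ubf_i) = \|A_i - B_i\|_2^2$, the elementary identity $|a^2 - b^2| = |a-b|(a+b)$ combined with the triangle inequality gives
\begin{equation*}
\left| \|A_1 - B_1\|_2^2 - \|A_2 - B_2\|_2^2 \right| \leq (\|A_1 - B_1\|_2 + \|A_2 - B_2\|_2)(\|A_1 - A_2\|_2 + \|B_1 - B_2\|_2),
\end{equation*}
which reduces the problem to uniformly bounding the four $\Lbf^2$ differences on the right.

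For the first factor, since $\fbf \in \Fcal^K$ and $\ubf \in (\Fcal - \Fcal)^K$ yield $\|f_k + u_k\|_2 \leq 3\cquad$, while $p \in \Delta - \Delta$ and $q \in \Qcal - \Qcal$ are uniformly bounded in $\ell^\infty$ by $2$, substituting in the defining formula for $g$ and using $\|f \otimes g \otimes h\|_2 = \|f\|_2 \|g\|_2 \|h\|_2$ shows that $\|A_i\|_2$ and $\|B_i\|_2$ are bounded by a constant $C_1$ depending only on $K$ and $\cquad$. For the second factor, I expand each of the $K^3$ summands of $A_1 - A_2$ as a six-term telescoping sum, one term per multilinear factor in the product $\pi(k_1)\,\Qbf(k_1, k_2)\,\Qbf(k_2, k_3)\, f_{k_1} \otimes f_{k_2} \otimes f_{k_3}$. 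In each such piece, five of the factors remain in the uniformly bounded classes above, while the sixth is a difference of the form $\pi_1(k_1) - \pi_2(k_1)$, $\Qbf_1(k,k') - \Qbf_2(k,k')$, or $(f_{1,k} + u_{1,k}) - (f_{2,k} + u_{2,k})$. Summing up over all pieces and using $\|(f_{1,k} - f_{2,k}) + (u_{1,k} - u_{2,k})\|_2 \leq \|f_{1,k} - f_{2,k}\|_2 + \|u_{1,k} - u_{2,k}\|_2$ yields
\begin{equation*}
\|A_1 - A_2\|_2 + \|B_1 - B_2\|_2 \leq C_2 \|(p_1, q_1, \ubf_1, \fbf_1) - (p_2, q_2, \ubf_2, \fbf_2)\|_\text{tot},
\end{equation*}
with $C_2$ again depending only on $K$ and $\cquad$. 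Combining the three ingredients gives the desired continuity.

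The proof is thus essentially bookkeeping of the multilinear expansion, so there is no conceptual difficulty. Two minor points are worth flagging: first, the hypothesis $\ubf \in \Span(\fbf)^K$ is not actually used in the argument and is merely inherited from the surrounding proof of Theorem \ref{th_minoration_quad}; second, one cannot directly invoke Lemma \ref{lemma_upperbound_diff_g} because $\pi^* + p$ and $\Qbf^* + q$ need not lie in $\Delta$ and $\Qcal$, but the direct telescoping expansion above handles this case uniformly and is what is really needed.
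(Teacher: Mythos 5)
Your proof is correct and takes essentially the same route as the paper: both arguments reduce the difference of squared norms to the $\Lbf^2$ differences of the underlying densities $g$, then combine a uniform bound on all the $g$'s (from \textbf{[HF]} and the boundedness of $\Delta-\Delta$ and $\Qcal-\Qcal$) with a Lipschitz bound on $(\pi,\Qbf,\fbf)\mapsto g^{\pi,\Qbf,\fbf}$, which the paper imports from Lemma~\ref{lemma_upperbound_diff_g} and you re-derive by telescoping the multilinear expansion. Your two flagged points are both accurate\---the restriction $\ubf\in\Span(\fbf)^K$ is indeed unused, and your direct expansion cleanly patches the fact that the paper applies Lemma~\ref{lemma_upperbound_diff_g} to $\pi^*+p$ and $\Qbf^*+q$, which need not lie in $\Delta$ and $\Qcal$.
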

Thus, by compactness of $(\Delta-\Delta) \times (\Qcal-\Qcal) \times ((\Fcal - \Fcal) \cap \Span(\fbf))^K$, the application
\begin{align*}
c_\text{far}: \fbf \longmapsto \inf_{(p,q,\ubf) \in (\Delta-\Delta) \times (\Qcal-\Qcal) \times ((\Fcal - \Fcal) \cap \Span(\fbf))^K \text{ s.t. } \| (p, q, \ubf) \|_\fbf > \epsilon_0} N_\fbf(p, q, \ubf)
\end{align*}
is continuous. Let us now prove that $c_\text{far}(\fbf^*) > 0$.

Let $(p_n, q_n, \ubf_n)_n \in ((\Delta-\Delta) \times (\Qcal-\Qcal) \times ((\Fcal - \Fcal) \cap \Span(\fbf^*))^K)^\Nbb$ be a sequence such that $\| (p_n, q_n, \ubf_n) \|_{\fbf^*} > \epsilon_0$ for all $n$ and
\begin{align*}
c_\text{far}(\fbf^*)
	= \lim_n N_{\fbf^*}(p_n, q_n, \ubf_n).
\end{align*}

By compactness, this sequences converges towards a limit $(p,q,\ubf)$ up to taking a subsequence. Necessarily $\| (p, q, \ubf) \|_{\fbf^*} \geq \epsilon_0$. Since \textbf{[Hid]} holds, Theorem \ref{th_identifiabilite} shows that $N_{\fbf^*}(p, q, \ubf) > 0$, which implies $c_\text{far}(\fbf^*) > 0$ by continuity of $N_{\fbf^*}$. Note that $c_\text{far}(\fbf^*)$ may depend on $\Fcal$ in addition to the parameters $\pi^*$, $\Qbf^*$ and $\fbf^*$.

Thus, by continuity, there exists $\epsilon_1 > 0$ such that for all $\fbf \in \Fcal^K$ such that $\sum_{k \in \Xcal} \| f_k - f^*_k \|_2^2 \leq \epsilon_1^2$, $c_\text{far}(\fbf) \geq c_\text{far}(\fbf^*) / 2$.

Finally, \textbf{[HF]} implies that there exists a constant $\Ccal$ depending only on $\cquad$ such that $\| (p,q,\ubf) \|_\fbf^2 \leq \| (p,q,\hbf) \|_\fbf^2 \leq \Ccal$ for all $(p,q,\hbf) \in (\Delta-\Delta) \times (\Qcal-\Qcal) \times (\Fcal - \Fcal)^K$. Therefore,
\begin{align*}
\forall (p,q,\hbf,\fbf) \in (\Delta-\Delta) \times (\Qcal-\Qcal) \times (\Fcal - \Fcal)^K \times \Fcal^K \\
	\text{ s.t. } \| (p, q, \ubf) \|_\fbf \geq \epsilon_0 \text{ and } \sum_{k \in \Xcal} \| f_k - f^*_k \|_2^2 \leq \epsilon_1^2, \qquad
\frac{N_{\fbf}(p, q, \hbf)}{\| (p,q,\hbf) \|_\fbf^2}
	&\geq \frac{N_{\fbf}(p, q, \ubf)}{\Ccal} \\
	&\geq \frac{c_\text{far}(\fbf^*)}{2 \Ccal}.
\end{align*}
The theorem follows by taking $c^*(\pi^*, \Qbf^*, \fbf^*, \Fcal) = \displaystyle \min\left(\frac{c_\text{far}(\fbf^*)}{2 \Ccal}, c_\text{near} \right)$ and the neighborhood containing all $\fbf \in \Fcal^K$ such that $\sum_{k \in \Xcal} \| f_k - f^*_k \|_2^2 \leq \min(\epsilon_0, \epsilon_1)^2$. Moreover, $(\pi, \Qbf, \fbf) \longmapsto c^*(\pi, \Qbf, \fbf, \Fcal)$ is lower bounded by this value in a neighborhood of $(\pi^*, \Qbf^*, \fbf^*)$, so that it can be assumed to be lower semicontinuous.

Note that the dependency of $c^*$ on $\Fcal$ appears during this last step and is made non explicit because of the compactness assumption.

\subsubsection{Proof of Lemma \ref{lemma_N_is_unifC0}}

\begin{align*}
\Big| N_\fbf(p,q,\ubf) -& N_{\fbf'}(p',q',\ubf') \Big|\\
	=& \left| \| g^{\pi^*+p, \Qbf^*+q, \fbf+\ubf} - g^{\pi^*, \Qbf^*, \fbf} \|_2^2
		- \| g^{\pi^*+p', \Qbf^*+q', \fbf'+\ubf'} - g^{\pi^*, \Qbf^*, \fbf'} \|_2^2 \right| \\
	\leq& 2 \| g^{\pi^*+p, \Qbf^*+q, \fbf+\ubf} - g^{\pi^*+p', \Qbf^*+q', \fbf'+\ubf'} \|_2^2
		+ 2 \| g^{\pi^*, \Qbf^*, \fbf} - g^{\pi^*, \Qbf^*, \fbf'} \|_2^2 \\
		&+ 2 \left| \left\langle g^{\pi^*+p', \Qbf^*+q', \fbf'+\ubf'} - g^{\pi^*, \Qbf^*, \fbf'} , g^{\pi^*+p, \Qbf^*+q, \fbf+\ubf} - g^{\pi^*+p', \Qbf^*+q', \fbf'+\ubf'} \right\rangle \right| \\
		&+ 2 \left| \left\langle g^{\pi^*+p', \Qbf^*+q', \fbf'+\ubf'} - g^{\pi^*, \Qbf^*, \fbf'} , g^{\pi^*, \Qbf^*, \fbf} - g^{\pi^*, \Qbf^*, \fbf'} \right\rangle \right|
\end{align*}
Then, using the fact that $\| g^{\pi, \Qbf, \fbf} - g^{\pi', \Qbf', \fbf'} \|_2 \leq \sqrt{3K} \cquad^3 \| (\pi - \pi', \Qbf - \Qbf', \fbf - \fbf', 0) \|_\text{tot}$ (see Lemma \ref{lemma_upperbound_diff_g}), that $\| g^{\pi, \Qbf, \fbf} \|_2 \leq \cquad^3$ (see for instance Lemma 29 of \cite{lehericy2016order}) and the Cauchy-Schwarz inequality,
\begin{align*}
\left| N_\fbf(p,q,\ubf) - N_{\fbf'}(p',q',\ubf') \right|
	\leq& 6K \cquad^6 \| (p-p', q-q', \fbf+\ubf-\fbf'-\ubf', 0) \|_\text{tot}^2 \\
		&+ 6K \cquad^6\| (0, 0, 0, \fbf-\fbf') \|_\text{tot}^2 \\
		&+ 4\sqrt{3K} \cquad^6 \| (p-p', q-q', \fbf+\ubf-\fbf'-\ubf', 0) \|_\text{tot} \\		
		&+ 4\sqrt{3K} \cquad^6 \| (0, 0, 0, \fbf-\fbf') \|_\text{tot}^2 \\
	\leq& 24 K \cquad^6 \Big( \| (p-p', q-q', \ubf-\ubf', \fbf-\fbf') \|_\text{tot}^2 \\
		&+ \| (p-p', q-q', \ubf-\ubf', \fbf-\fbf') \|_\text{tot} \Big),
\end{align*}
which proves the uniform continuity of the application.

\bibliography{these}

\begin{thebibliography}{20}
\providecommand{\natexlab}[1]{#1}
\providecommand{\url}[1]{\texttt{#1}}
\expandafter\ifx\csname urlstyle\endcsname\relax
  \providecommand{\doi}[1]{doi: #1}\else
  \providecommand{\doi}{doi: \begingroup \urlstyle{rm}\Url}\fi

\bibitem[Anandkumar et~al.(2012)Anandkumar, Hsu, and Kakade]{AHK12}
Animashree Anandkumar, Daniel~J Hsu, and Sham~M Kakade.
\newblock A method of moments for mixture models and hidden {M}arkov models.
\newblock In \emph{COLT}, volume~1, page~4, 2012.

\bibitem[Arlot et~al.(2010)Arlot, Celisse, et~al.]{arlot2010survey}
Sylvain Arlot, Alain Celisse, et~al.
\newblock A survey of cross-validation procedures for model selection.
\newblock \emph{Statistics surveys}, 4:\penalty0 40--79, 2010.

\bibitem[Baudry et~al.(2012)Baudry, Maugis, and Michel]{BMM12a}
Jean-Patrick Baudry, Cathy Maugis, and Bertrand Michel.
\newblock Slope heuristics: overview and implementation.
\newblock \emph{Statistics and Computing}, 22\penalty0 (2):\penalty0 455--470,
  2012.

\bibitem[Bertrand et~al.(2015)Bertrand, Joo, and
  Fablet]{bertrand2015ParetoRWBirdTrajectories}
Sophie Bertrand, Roc{\'\i}o Joo, and Ronan Fablet.
\newblock Generalized {P}areto for pattern-oriented random walk modelling of
  organisms’ movements.
\newblock \emph{PloS one}, 10\penalty0 (7):\penalty0 e0132231, 2015.

\bibitem[Birg{\'e} and Massart(2007)]{birge2007minimal}
Lucien Birg{\'e} and Pascal Massart.
\newblock Minimal penalties for gaussian model selection.
\newblock \emph{Probability theory and related fields}, 138\penalty0
  (1-2):\penalty0 33--73, 2007.

\bibitem[Bonhomme et~al.(2016{\natexlab{a}})Bonhomme, Jochmans, and
  Robin]{bonhomme2016multiview}
St{\'e}phane Bonhomme, Koen Jochmans, and Jean-Marc Robin.
\newblock Non-parametric estimation of finite mixtures from repeated
  measurements.
\newblock \emph{Journal of the Royal Statistical Society: Series B (Statistical
  Methodology)}, 78\penalty0 (1):\penalty0 211--229, 2016{\natexlab{a}}.

\bibitem[Bonhomme et~al.(2016{\natexlab{b}})Bonhomme, Jochmans, and
  Robin]{robin2014estimating}
St{\'e}phane Bonhomme, Koen Jochmans, and Jean-Marc Robin.
\newblock Estimating multivariate latent-structure models.
\newblock \emph{The Annals of Statistics}, 44\penalty0 (2):\penalty0 540--563,
  2016{\natexlab{b}}.

\bibitem[Boyd et~al.(2014)Boyd, Punt, Weimerskirch, and
  Bertrand]{boyd2014HmmUsesOfBirdTrajectories}
Charlotte Boyd, Andr{\'e}~E Punt, Henri Weimerskirch, and Sophie Bertrand.
\newblock Movement models provide insights into variation in the foraging
  effort of central place foragers.
\newblock \emph{Ecological modelling}, 286:\penalty0 13--25, 2014.

\bibitem[De~Castro et~al.(2016)De~Castro, Gassiat, and Lacour]{dCGL15}
Yohann De~Castro, {\'E}lisabeth Gassiat, and Claire Lacour.
\newblock Minimax adaptive estimation of nonparametric hidden {M}arkov models.
\newblock \emph{Journal of Machine Learning Research}, 17\penalty0
  (111):\penalty0 1--43, 2016.

\bibitem[De~Castro et~al.(2017)De~Castro, Gassiat, and Le~Corff]{dCGLLC15}
Yohann De~Castro, Elisabeth Gassiat, and Sylvain Le~Corff.
\newblock Consistent estimation of the filtering and marginal smoothing
  distributions in nonparametric hidden {M}arkov models.
\newblock \emph{IEEE Transactions on Information Theory}, 2017.

\bibitem[DeVore and Lorentz(1993)]{devore1993constructive}
Ronald~A DeVore and George~G Lorentz.
\newblock \emph{Constructive approximation}, volume 303.
\newblock Springer Science \& Business Media, 1993.

\bibitem[Gassiat et~al.(2015)Gassiat, Cleynen, and Robin]{GCR15}
Elisabeth Gassiat, Alice Cleynen, and St{\'e}phane Robin.
\newblock Finite state space non parametric hidden {M}arkov models are in
  general identifiable.
\newblock \emph{Stat. Comp.}, pages 1--11, 2015.

\bibitem[Gassiat et~al.(2016)Gassiat, Rousseau, and
  Elodie]{gassiat2016multiview}
Elisabeth Gassiat, Judith Rousseau, and Vernet Elodie.
\newblock Efficient semiparametric estimation and model selection for
  multidimensional mixtures.
\newblock \emph{arXiv preprint arXiv:1607.05430}, 2016.

\bibitem[Goldenshluger and Lepski(2011)]{goldenshluger2011bandwidth}
Alexander Goldenshluger and Oleg Lepski.
\newblock Bandwidth selection in kernel density estimation: oracle inequalities
  and adaptive minimax optimality.
\newblock \emph{The Annals of Statistics}, pages 1608--1632, 2011.

\bibitem[Goldenshluger and Lepski(2013)]{goldenshluger2013general}
Alexander Goldenshluger and Oleg Lepski.
\newblock General selection rule from a family of linear estimators.
\newblock \emph{Theory of Probability \& Its Applications}, 57\penalty0
  (2):\penalty0 209--226, 2013.

\bibitem[Goldenshluger and Lepski(2014)]{goldenshluger2014adaptive}
Alexander Goldenshluger and Oleg Lepski.
\newblock On adaptive minimax density estimation on $\mathbb{R}^d$.
\newblock \emph{Probability Theory and Related Fields}, 159\penalty0
  (3-4):\penalty0 479--543, 2014.

\bibitem[Hsu et~al.(2012)Hsu, Kakade, and Zhang]{HKZ12}
Daniel Hsu, Sham~M Kakade, and Tong Zhang.
\newblock A spectral algorithm for learning hidden {M}arkov models.
\newblock \emph{Journal of Computer and System Sciences}, 78\penalty0
  (5):\penalty0 1460--1480, 2012.

\bibitem[Lacour et~al.(2016)Lacour, Massart, and
  Rivoirard]{lacour2016estimator}
Claire Lacour, Pascal Massart, and Vincent Rivoirard.
\newblock Estimator selection: a new method with applications to kernel density
  estimation.
\newblock \emph{arXiv preprint arXiv:1607.05091}, 2016.

\bibitem[Leh{\'e}ricy(to appear)]{lehericy2016order}
Luc Leh{\'e}ricy.
\newblock Order estimation for non-parametric hidden {M}arkov models.
\newblock \emph{Bernouilli}, to appear.

\bibitem[Vermard et~al.(2010)Vermard, Rivot, Mah{\'e}vas, Marchal, and
  Gascuel]{vermard2010HmmUsesOfVesselTrajectories}
Youen Vermard, Etienne Rivot, St{\'e}phanie Mah{\'e}vas, Paul Marchal, and
  Didier Gascuel.
\newblock Identifying fishing trip behaviour and estimating fishing effort from
  {VMS} data using {B}ayesian hidden {M}arkov models.
\newblock \emph{Ecological Modelling}, 221\penalty0 (15):\penalty0 1757--1769,
  2010.

\end{thebibliography}

\end{document}